
\documentclass[reqno,11pt]{amsart}
\usepackage{amscd,amssymb}
\usepackage{amsmath}
\usepackage{color}
\usepackage{hyperref}
\usepackage{comment}
\usepackage{tikz}
\usetikzlibrary{matrix}
\usepackage{marginnote}
\usepackage{enumitem}

\setlength{\textwidth}{6.3in}
 \addtolength{\oddsidemargin}{-1.7cm}
\addtolength{\evensidemargin}{-1.7cm}

\numberwithin{equation}{section}


\theoremstyle{plain}
\newtheorem{theorem}[subsection]{Theorem}
\newtheorem{lemma}[subsection]{Lemma}
\newtheorem{corollary}[subsection]{Corollary}
\newtheorem{proposition}[subsection]{Proposition}

\newtheorem{assumption}[subsection]{Assumption}

\newtheorem{definition}[subsection]{Definition}

\theoremstyle{definition}

\theoremstyle{remark}
\newtheorem{remark}[subsection]{Remark}

\usepackage{amsmath,amsfonts,latexsym}

\newcommand{\F}{\mathbb{F}}



\newcommand{\tensor}{\otimes}


\DeclareMathOperator{\Gr}{Gr}

\DeclareMathOperator{\id}{id}
\DeclareMathOperator{\Ker}{Ker}

\DeclareMathOperator{\supp}{supp}   
\DeclareMathOperator{\im}{im}      
\DeclareMathOperator{\End}{End}    

\DeclareMathOperator{\tr}{tr}

\DeclareMathOperator{\pr}{pr}

\DeclareMathOperator{\ind}{ind}


\newcommand{\forget}[1]{}

\newcommand{\innerprod}[1]{\langle #1 \rangle}




\allowdisplaybreaks[2]

\newcommand{\RR}{\mathbb{R}}
\newcommand{\CC}{\mathbb{C}}
\newcommand{\ZZ}{\mathbb{Z}}

\newcommand{\Id}{\operatorname{Id}}
\newcommand{\NGam}{\mathcal{N}\Gamma}
\newcommand{\<}{\langle}
\renewcommand{\>}{\rangle}
\newcommand{\widehatD}{\widehat{D}}

\begin{document}
\title{Callias-type operators in von Neumann algebras}

\author{Maxim Braverman${}^\dag$}
\address{Department of Mathematics,
Northeastern University,
Boston, MA 02115,
USA}

\email{maximbraverman@neu.edu}
\urladdr{www.math.neu.edu/~braverman/}

\author{Simone Cecchini}
\address{Department of Mathematics,
Northeastern University,
Boston, MA 02115,
USA}

\email{cecchini.s@husky.neu.edu}

\thanks{${}^\dag$Supported in part by the NSF grant DMS-1005888.}


\begin{abstract}
We study differential operators on complete Riemannian manifolds which act on sections of a bundle of finite type modules over a von Neumann algebra with a trace. We prove a relative index and a Callias-type index theorems for von Neumann indexes of such operators. We apply these results to obtain a version of Atiyah's $L^2$-index theorem, which states that the index of a Callias-type operator on a non-compact manifold $M$ is equal to the $\Gamma$-index of its lift to a Galois cover of $M$.  We also prove the cobordism invariance of the index of Callias-type operators. In particular, we give a new proof of the cobordism invariance of the von Neumann index of operators on compact manifolds. 
\end{abstract}
\maketitle

\maketitle

\section{Introduction}\label{S:intro}

Index theory of operators acting on bundles of modules over a von Neumann algebra was initiated by Atiyah's \cite{Atiyah76vN}. Atiyah studied the special case of a Galois cover of a compact manifold $M$ with a covering group $\Gamma$. More specifically, let $D$ be an elliptic operator acting on a finite dimensional vector bundle over a compact manifold $M$ and let $\widetilde{M}$ be a Galois cover of $M$ with covering group $\Gamma$. We denote by $\mathcal{N}\Gamma$ the von Neumann algebra of $\Gamma$.  The lift $\widetilde{D}$ of $D$ to $\widetilde{M}$  can be viewed as an operator on $M$ with values in a bundle of $\mathcal{N}\Gamma$-modules. Atiyah studied the $\Gamma$-index of this operator and showed that it is equal to the index of $D$. The index theorem for an operator acting on a general bundle of von Neumann modules over a compact manifold was obtained by Schick \cite{Schick05l2}. 

In this paper we investigate the von Neumann index of Callias-type operators on  {\em non-compact manifolds}. 
A Callias-type operator on a complete Riemannian manifold is an operator of the form $B=D+\Phi$, where $D$ is a first order elliptic operator and $\Phi$ is a potential satisfying certain conditions. If $B$ acts on sections of a finite dimensional vector bundle, then the conditions on $\Phi$ guarantee that the operator $B$ is Fredholm. The study of such operators was initiated by Callias, \cite{Callias78}, and further developed by many authors, cf., for example, \cite{BottSeeley78}, \cite{BruningMoscovici}, \cite{Anghel1993}, \cite{Bunke1995}. Several generalizations and new applications of the Callias-type index theorem were obtained recently, cf. \cite{Kottke11}, \cite{CarvalhoNistor14}, \cite{Wimmer14}, \cite{Kottke15}, \cite{BravermanShi}.

The extension of the definition of Callias-type index to operators acting on sections of a bundle of modules over a von Neumann algebra was obtained in \cite{BrCe15}. In this paper we obtain several properties of this index.

\subsection{The relative index theorem}\label{SS:Irelative index}
The relative index theorem of Gromov and Lawson \cite{GromovLawson83} allows to use a ``cut and paste" procedure to compute the index. This theorem was reformulated for Callias-type operators by Anghel \cite{anghel1990} and Bunke \cite{Bunke1995}. Moreover, Bunke considered the case of $C^*$-algebra valued differential operators invertible at infinity. 

Our first result in this paper is a version of the relative index theorem for  operators acting on bundles of modules over a von Neumann algebra with finite trace. 
Notice, that our result does not follow directly from \cite{Bunke1995}, since it   is not clear whether the von Neumann index descends from the $K$-theoretical index used by Bunke. In particular, it is not clear whether our operators satisfy the conditions required in \cite{Bunke1995}.

\subsection{The Callias-type index theorem}\label{SS:ICallias index}
The Callias-type index theorem, \cite{Anghel1993}, \cite{Bunke1995}, states that the index of a Callias-type operator $B$ is equal to the index of a certain operator induced by the restriction of $B$ to a compact hypersurface. We show that this result continues to hold for the von Neumann index of operators acting on sections of a bundle of modules over a von Neumann algebra. 

Our proof uses the relative index theorem and is similar to the proof in the case of a finite dimensional vector bundles in  \cite{Anghel1993} and \cite{Bunke1995}. However, more care is needed due to the fact that the spectrum of $B$ is not discrete around $0$, see Section~\ref{SS:diffrence from C} for details.

\subsection{The cobordism invariance of the von Neumann index}\label{SS:Icobordism}
The cobordism invariance of the index was used in the original proof of the index theorem \cite{AtSinger63} (see also \cite{Palais65}). Since then many generalizations and new analytic proofs of the cobordism invariance of the index appeared in the literature, see for example \cite{Higson91},\cite{Br-cob},\cite{Br-index},\cite{Carvalho05},\cite{Carvalho12},\cite{Br-cobtr},\cite{Hilsum10},\cite{Br-index4proper}. The cobordism invariance of the index of Callias-type operators acting on finite dimensional vector bundles was recently obtained in \cite{BravermanShi}. 

In this paper we show that the cobordism invariance of the index can be obtained as an easy corollary of the Callias-type index theorem. By this method we prove the cobordism invariance of the von Neumann index both for Dirac-type operators on compact manifolds and for  Callias-type operators on non-compact manifolds.

\subsection{Atiyah's type $L^2$-index theorem}\label{SS:IL2index}
Let $M$ be a complete Riemannian manifold and let $B$ be a Callias-type operator on $M$, acting on sections of a  {\em finite dimensional} vector bundle $E\to M$. Let $\widetilde{M}$ be a Galois cover of $M$ with covering group $\Gamma$. We denote by $\widetilde{B}$ the lift of $B$ to $\widetilde{M}$. Then $\widetilde{B}$ is a $\Gamma$-equivariant operator and its $\Gamma$-index $\ind_\Gamma\widetilde{B}$ is well defined. We show that 
\[
	\ind_\Gamma\widetilde{B} \ =\  \ind B.
\]
For the case when the manifold $M$ is compact, this result was proven by Atiyah  \cite{Atiyah76vN}.

\subsection{}\label{SS:Icontent} The paper is organized as follows: in Section~\ref{S:main results} we formulate the main results of the paper. In Section~\ref{S:relative index theorem} we prove the relative index theorem. Sections~\ref{S:cylinder}--\ref{S:general case} are devoted to the proof of the Callias-type index theorem. In Section~\ref{S:cobordism} we prove the cobordism invariance of the index.  Finally in Section~\ref{S:Gamma index in odd dimension} we prove the Atiyah-type $L^2$-index theorem for Callias-type operators.


\section{The main results}\label{S:main results}

In this section we formulate the main results of the paper.


\subsection{Callias-type operators}\label{SS:Callias-type operators}
Let $A$ be a von Neumann algebra with a finite, faithful, normal  trace $\tau:A\to \CC$ and let $M$ be a complete Riemannian manifold. 
Assume  $E=E^+\oplus E^-$ is a $\ZZ_2$-graded $A$-Hilbert bundle of finite type over $M$ (for this notion we refer to \cite[Sections~8.1-8.3]{Schick05l2} and \cite[Section~3.5]{BrCe15}).
Let
\[
\mathcal{D}:=\left(\begin{array}{cc}0&D^-\\D^+&0\end{array}\right)
\]
be an elliptic differential operator acting on smooth sections of $E$, where 
\[
D^\pm:C^\infty(M,E^\pm)\rightarrow C^\infty(M,E^\mp)
\]
 are formally adjoint to each other.
We make the following
\begin{assumption}\label{A:uniformly bounded principal symbol}
The principal symbol of $\mathcal{D}$ is uniformly bounded from above, i.e. there exists a constant $b>0$ such that
\begin{equation}
\|\sigma(\mathcal{D})(x,\xi)\|\leq b\,|\xi|,\qquad\qquad\textrm{for all }x\in M,\ \xi\in T^\ast_xM.
\end{equation}
Here $|\xi|$ denotes the length of $\xi$ defined by the Riemannian metric on M,\/ $\sigma(D)(x,\xi): E^+_x\rightarrow E^-_x$ is the leading symbol of $\mathcal{D}$, and $\|\sigma(\mathcal{D})(x,\xi)\|$ is its operator norm.
\end{assumption}

By \cite{BrCe15}, the above assumption guarantees that the operator $\mathcal{D}$ is essentially self-adjoint. 

Let $F^+:E^+\rightarrow E^-$ be a morphism of $A$-Hilbert bundles and consider the odd self-adjoint endomorphism of $E$
\[
\mathcal{F}:=\left(\begin{array}{cc}0&F^-\\F^+&0\end{array}\right),
\]
where $F^-:E^-\rightarrow E^+$ is the formal adjoint of $F^+$.

\begin{definition}\label{D:admissible endomorphism}
The endomorphism $\mathcal{F}$ is called \emph{admissible} if
\begin{enumerate}
\item[(i)] the anticommutator 
\[
\{\mathcal{D},\mathcal{F}\}:=\mathcal{D}\circ\mathcal{F}+\mathcal{F}\circ\mathcal{D}=
\left(\begin{array}{cc}D^-F^++F^-D^+&0\\0&D^+F^-+F^+D^-\end{array}\right)
\]
 is an operator of order
 $0$;
\item[(ii)] there exist a constant $c>0$ and a compact set $K\subset M$ such that 
\begin{equation}\label{E:callias condition}
\mathcal{F}^2+\{\mathcal{D},\mathcal{F}\}\geq c,\ \ \ \ \ \ \ \ \ \ \ \ \textrm{ on }M\setminus K.
\end{equation}
\end{enumerate}
In this case, we say that the compact set $K$ is an \emph{essential support} of $\mathcal{F}$.
\end{definition}

\begin{definition}\label{D:Callias}
A {\em Callias-type operator} is an operator of the form
\begin{equation}\label{E:Callias}
	B\ :=\ \mathcal{D}+\mathcal{F}\ =\ \left(\begin{array}{cc}0&D^-+F^-\\D^++F^+&0\end{array}\right),
\end{equation}
where $\mathcal{F}$ is an admissible endomorphism. We set $B^\pm:=D^\pm+F^\pm$.
\end{definition}

\subsection{The $\tau$-index of a Callias-type operator}\label{SS:Callias index}

We use the Riemannian metric on $M$ and the inner product on the fibers of $E$ to define the $A$-Hilbert space $L^2(M,E)$ of square-integrable sections of $E$ and we regard $B$ as an $A$-linear unbounded operator on $L^2(M,E)$.
By \cite[Theorem~2.3]{BrCe15}, $B$ is essentially self-adjoint with initial domain $C^\infty_c(M,E)$.
We denote its closure by the same symbol $B$.

The  trace $\tau:A\to \CC$ on $A$ induces a dimension function 
\[
\dim_\tau:\Gr(L^2(M,E))\longrightarrow [0,\infty],
\]
where $\Gr(L^2(M,E))$ denotes the set of closed $A$-invariant subspaces of $L^2(M,E)$: for the construction of the function $\dim_\tau$ out of the trace $\tau$ see \cite[Sections~1.1.3~and~9.1.4]{lueck2002l2}.
By \cite[Theorem~2.19]{BrCe15}, the operator $B$ is $\tau$-Fredholm (for the notion of closed $\tau$-Fredholm operator we refer to  \cite{Breuer68Fredholm1} and \cite{Breuer69Fredholm2}).
In particular, this means that the $A$-Hilbert spaces $\Ker B^\pm$ have finite $\tau$-dimension and we define the $\tau$-index of $B$ by
\begin{equation}\label{E:tau index}
\ind_\tau B:=\dim_\tau\ker B^+-\dim_\tau\ker B^-.
\end{equation}


\subsection{A relative index theorem}\label{SS:A relative index theorem}

Suppose $E_j$, with $j=0,1$, are $\ZZ_2$-graded $A$-Hilbert bundles over complete Riemannian manifolds $M_j$ and $B_j=\mathcal{D}_j+\mathcal{F}_j$ are Callias-type operators.
Suppose $M_j=W_j\cup_{N_j} V_j$ are partitions of $M_j$, where $N_j$ are \emph{compact} hypersurfaces.
We make the following
\begin{assumption}\label{A:cut and paste}
There exist tubular neighborhoods $U(N_0)$, $U(N_1)$ respectively of $N_0$, $N_1$ and a diffeomorphism $\psi:U(N_0)\rightarrow U(N_1)$ such that:
\begin{enumerate}
\item[(i)] $\psi$ restricts to a diffeomorphism between $N_0$ and $N_1$;
\item[(ii)] there exists an isomorphism of $\ZZ_2$-graded $A$-Hilbert bundles $\Psi:E_0|_{U(N_0)}\rightarrow E_1|_{U(N_1)}$ covering $\psi$;
\item[(iii)] the restrictions $\mathcal{D}_0|_{U(N_0)}$ and $\mathcal{D}_1|_{U(N_1)}$ are conjugated through the isomorphism $\Psi$.
\end{enumerate}
\end{assumption}

We cut  $M_i$ along $N_i$ and use the map $\psi$ to glue the pieces together interchanging $V_0$ and $V_1$.
In this way we obtain the manifolds 
\[
	M_2:=W_0\cup_NV_1, \qquad M_3:=W_1\cup_NV_0,
\]
where $N\cong N_0\cong N_1$. We refer to $M_2$ and $M_3$ as {\em manifolds obtained from $M_0$ and $M_1$ by cutting and pasting}.

We use the map $\Psi$ to cut the bundles $E_0$, $E_1$ at $N_0$, $N_1$ and glue the pieces together interchanging $E_0|_{V_0}$ and $E_1|_{V_1}$.
With this procedure we obtain $\ZZ_2$-graded $A$-Hilbert bundles $E_2\rightarrow M_2$ and $E_3\rightarrow M_3$.
Use Condition~(iii) of Assumption~\ref{A:cut and paste} to construct an odd formally self-adjoint first order elliptic operator $\mathcal{D}_2$ acting on smooth sections of $E_2$ satisfying Assumption~\ref{A:uniformly bounded principal symbol} and such that $\mathcal{D}_2|_{W_0}=\mathcal{D}_0|_{W_0}$ and $\mathcal{D}_2|_{V_1}=\mathcal{D}_1|_{V_1}$.
In the same fashion, define an elliptic operator $\mathcal{D}_3$ acting on smooth sections of the bundle $E_3$.

Assume $\mathcal{F}_0$ and $\mathcal{F}_1$ are admissible endomorphisms respectively of the bundles $E_0$ and $E_1$. For $j=0,1$ we choose positive functions $\alpha_j,\,\beta_j\in C^\infty(M_j)$ such that:
\begin{enumerate}[label=\textbf{(C.\arabic*)},ref=C.\arabic*]
\item \label{C:cut-off1} $\supp\alpha_j\subset W_j\cup U(N_j)\textrm{ and }\supp\beta_j\subset V_j\cup U(N_j)$;
\item \label{C:cut-off2} $\alpha_j=1\textrm{ on }W_j\setminus U(N_j)\textrm{ and }\beta_j=1\textrm{ on }V_j\setminus U(N_j)$;
\item \label{C:cut-off3} $\alpha_j^2+\beta_j^2=1$.
\end{enumerate}
By construction, $E_2|_{W_0\cup\, U(N_0)}\cong E_0|_{W_0\cup \,U(N_0)}$ and $E_2|_{V_1\cup\, U(N_1)}\cong E_1|_{V_1\cup\, U(N_1)}$.
We use this identification and Condition~(\ref{C:cut-off1}) to define the endomorphism $\mathcal{F}_2:=\mathcal{F}_0\alpha_0+\mathcal{F}_1\beta_1$ of $E_2$. In the same way we define the endomorphism $\mathcal{F}_3:=\mathcal{F}_1\alpha_1+\mathcal{F}_0\beta_0$ of $E_3$.
Observe that $\mathcal{F}_2$ and $\mathcal{F}_3$ are admissible so that the operators $B_2:=\mathcal{D}_2+\mathcal{F}_2$ and $B_3:=\mathcal{D}_3+\mathcal{F}_3$ are of Callias-type. We often refer to $B_2$ and $B_3$ as {\em operators obtained from $B_0$ and $B_1$ by cutting and pasting}.

The first result of the paper is the following

\begin{theorem}[Relative index theorem in von Neumann setting]\label{T:relative index theorem in von neumann setting}
\[
	\ind_\tau B_0+\ind_\tau B_1\ =\ \ind_\tau B_2+\ind_\tau B_3.
\]
\end{theorem}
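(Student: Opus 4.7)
The plan is to adapt the finite-propagation-speed heat-kernel argument of Anghel \cite{anghel1990} and Bunke \cite{Bunke1995} to the von Neumann setting, using a McKean--Singer formula for the $\tau$-index. The starting point is the spectral gap coming from admissibility: since $B_j^2 \geq c$ outside the compact essential support $K_j$, the arguments of \cite{BrCe15} give that $B_j$ is $\tau$-Fredholm, and one should further establish that the heat operators $e^{-tB_j^\mp B_j^\pm}$ admit a well-defined $\tau$-supertrace, independent of $t>0$, with
\[
\ind_\tau B_j \;=\; \tau\bigl(e^{-t B_j^- B_j^+}\bigr)\,-\,\tau\bigl(e^{-t B_j^+ B_j^-}\bigr)\;=:\;\operatorname{str}_\tau\bigl(e^{-tB_j^2}\bigr).
\]
Writing the heat operator via its smooth integral kernel $K_j(t,x,y)$ (with values in endomorphisms of the relevant $A$-Hilbert fibres), the pointwise $\tau$-supertrace $k_j(t,x) := \operatorname{str}_\tau K_j(t,x,x)$ is integrable and satisfies $\ind_\tau B_j = \int_{M_j} k_j(t,x)\,dx$.

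Next, by enlarging the essential supports if necessary, I would arrange that $K_j \cap U(N_j) = \emptyset$, so that $B_2 = B_0$ on $W_0 \setminus U(N_0)$, $B_2 = B_1$ on $V_1 \setminus U(N_1)$, and similarly $B_3 = B_0$ on $V_0$, $B_3 = B_1$ on $W_1$; moreover, the isomorphism $\Psi$ identifies $B_0$ with $B_2$ and $B_1$ with $B_3$ on the tubular neighborhoods of $N_j$. Splitting each integral $\int_{M_j} k_j$ according to the decomposition $M_j = W_j \cup V_j$, the difference
\[
\Delta(t)\;:=\;\ind_\tau B_0+\ind_\tau B_1-\ind_\tau B_2-\ind_\tau B_3
\]
rearranges into differences of pointwise heat-kernel supertraces, each time between two operators that \emph{coincide} in a neighborhood of the integration region.

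The key analytic input is finite propagation speed for the wave operator $\cos(sB_j)$, which extends from the scalar case to our setting since $\mathcal{D}_j$ is first-order with uniformly bounded principal symbol (Assumption~\ref{A:uniformly bounded principal symbol}) and $\mathcal{F}_j$ is a zeroth-order endomorphism. Via the Gaussian representation
\[
e^{-tB_j^2}\;=\;\frac{1}{\sqrt{4\pi t}}\int_{-\infty}^{\infty} e^{-s^2/4t}\cos(sB_j)\,ds,
\]
unit propagation implies that $k_j(t,x)$ at a point $x$ with $\dist(x,N_j)\geq d$ depends on $B_j$ only in the ball of radius $d$ around $x$, up to an error of size $O(e^{-d^2/4t})$ uniformly in $x$. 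Choosing $d$ of the order of the half-width of $U(N_j)$, the pairwise cancellations across $N_j$ give $\Delta(t) = O(e^{-d^2/4t})$ as $t \to 0^+$. Since $\Delta(t)$ is independent of $t$, this forces $\Delta(t)=0$.

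The main obstacle I anticipate is the first step: establishing that the McKean--Singer identity holds for Callias-type operators over a von Neumann algebra, and that the $\tau$-supertrace of $e^{-tB_j^2}$ can literally be written as the integral of a pointwise $\tau$-supertrace of a smooth integral kernel. The heat operator is \emph{not} $\tau$-trace class globally, so the supertrace must be defined via a compensation between the $\pm$-chirality sectors, exploiting the spectral gap; this goes beyond the $\tau$-Fredholmness statement of \cite{BrCe15} and will require delicate off-diagonal estimates for the heat kernel. Once these analytic preliminaries are in place, the remainder of the argument is a formal consequence of finite propagation speed and the cut-and-paste construction.
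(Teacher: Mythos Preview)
Your approach is genuinely different from the paper's and, as you yourself flag, rests on an analytic foundation that is not in place. The paper sidesteps heat kernels entirely. It forms the direct sum $B = B_0 \oplus B_1 \oplus B_2 \oplus B_3$ acting on $E_0 \oplus E_1 \oplus E_2^{\mathrm{op}} \oplus E_3^{\mathrm{op}}$, so that the desired identity becomes $\ind_\tau B = 0$. Then, using the cut-off functions $\alpha_j,\beta_j$, it builds an explicit \emph{odd} self-adjoint bundle map $U$ with $U^2 = \id$ and $\{B,U\}$ a compactly supported endomorphism. Oddness of $U$ gives $\ind_\tau(UBU) = -\ind_\tau B$; on the other hand $UBU = B - \{B,U\}U$ differs from $B$ by a compactly supported bounded perturbation, so the stability result \cite[Theorem~2.21]{BrCe15} gives $\ind_\tau(UBU) = \ind_\tau B$. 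Hence $\ind_\tau B = 0$. The only analytic input is the stability of the $\tau$-index under bounded compactly supported perturbations, which is already available from \cite{BrCe15}.

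Your route, by contrast, requires a McKean--Singer formula for $B_j$ on a non-compact manifold with coefficients in an $A$-Hilbert bundle: $e^{-tB_j^2}$ is not $\tau$-trace class, and the existence of an integrable pointwise $\tau$-supertrace density $k_j(t,x)$ whose integral equals $\ind_\tau B_j$ is not established anywhere in \cite{BrCe15} or the surrounding literature; this is the obstacle you correctly identify, and it is a substantial one. There is a second issue you do not address: even granting finite propagation speed, the pointwise bound $|k_0(t,x)-k_2(t,x)| = O(e^{-d^2/4t})$ must be integrated over the \emph{non-compact} piece $W_0$, so a uniform pointwise estimate is not enough; you would additionally need decay of $k_j(t,x)$ away from the essential support (coming from $B_j^2 \geq c$ there), and the interaction of these two estimates in the $\tau$-setting is delicate. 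The paper's algebraic argument buys you the theorem with none of this work.
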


\begin{remark}
In the case $A=\CC$, this theorem was proved by Gromov and Lawson in \cite{GromovLawson83}. A $K$-theoretical version has been proved by Bunke in \cite{Bunke1995}. Our formulation of the relative index theorem is close to this last one.
The reason why we cannot apply directly \cite[Theorem~1.2]{Bunke1995} is that it is not clear whether the numerical index that we use in the present paper descends from a $K$-theoretical one.
In particular, it is not clear whether the operator $B$ is invertible at infinity in the sense of Bunke, cf. the discussion on page~258 of \cite{Bunke1995}.
\end{remark}


\subsection{Ungraded bundles and Callias-type operators.}\label{SS:the odd dimensional case}

We now describe a class of Callias-type operators constructed out of an \emph{ungraded} bundle.
Let $M$ be a complete odd-dimensional Riemannian manifold and let $\Sigma\rightarrow M$ be an ungraded $A$-Hilbert bundle over $M$. 
Suppose $D\colon C^\infty_c(M,\Sigma)\rightarrow C^\infty_c(M,\Sigma)$ is a formally self-adjoint first order elliptic operator satisfying Assumption~\ref{A:uniformly bounded principal symbol}. 
Let $\Phi$ be a self-adjoint endomorphism of $\Sigma$.

\begin{definition}\label{D:ungraded admissible endomorphism}
The endomorphism $\Phi$ is said to be \emph{admissible} for the pair $(\Sigma, D)$ if
\begin{enumerate}
\item[(i)] the commutator $[D,\Phi]:=D\Phi-\Phi D$ is an endomorphism of $\Sigma$;
\item[(ii)] there exist a constant $d>0$ and a compact set $K\subset M$ such that 
\begin{equation}\label{E:ungraded admissible endomorphism}
	\Phi^2(x)\ \geq\  d\ +\ \|[D,\Phi](x)\|,\qquad x\in M\setminus K,
\end{equation}
where $\|[D,\Phi](x)\|$ denotes the operator norm of the bounded operator $[D,\Phi](x):\Sigma_x\to \Sigma_x$.
\end{enumerate}
In this case we say that $K$ is an \emph{essential support} of $\Phi$ with respect to the pair $(\Sigma,D)$ and that $(\Sigma, D, \Phi)$ is an \emph{admissible ungraded triple} over $M$.
\end{definition}

Suppose the endomorphism $\Phi$ is admissible and regard $\Sigma\oplus\Sigma$ as a $\ZZ_2$-graded $A$-Hilbert bundle over $M$. Then the first order elliptic operator
\begin{equation}\label{E:Callias-type operator in odd dimension1}
\mathcal{D}:=\left(\begin{array}{cc}0&D\\D&0\end{array}\right)
\end{equation}
is odd with respect to this grading, formally self-adjoint and satisfies Assumption~\ref{A:uniformly bounded principal symbol}.
We use the endomorphism $\Phi$ to define the odd self-adjoint endomorphism of $\Sigma\oplus\Sigma$
\begin{equation}\label{E:Callias-type operator in odd dimension2}
\mathcal{F}_\Phi:=i\left(\begin{array}{cc}0&-\Phi\\\Phi&0\end{array}\right).
\end{equation}
Conditions (i) and (ii) of Definition~\ref{D:ungraded admissible endomorphism} imply that $\mathcal{F}_\Phi$ is admissible according to Definition~\ref{D:admissible endomorphism} and any essential support of $\Phi$ is also an essential support of $\mathcal{F}_\Phi$.

\begin{definition}\label{D:classical Callias-type operator}
The operator
\begin{equation}\label{E:classical Callias-type operator}
B_\Phi:=\mathcal{D}+\mathcal{F}_\Phi=\left(\begin{array}{cc}0&D-i\Phi\\D+i\Phi&0\end{array}\right)
\end{equation}
is called the \emph{Callias-type operator associated with the triple $(\Sigma, D, \Phi)$}.
\end{definition}
The equation \eqref{E:tau index} for the Callias index takes in this case the form
\begin{equation}\label{E:nongraded index}
	\ind_\tau B_\Phi\ = \ \dim_\tau\ker (D+i\Phi)\ - \ \dim_\tau\ker (D-i\Phi).
\end{equation}
In particular, we obtain
\begin{equation}\label{E:Phi=-Phi}
	\ind_\tau B_{-\Phi}\ = \ -\ind_\tau B_\Phi.
\end{equation}
Since the index of the Callias-type operator on a {\em compact} manifold is independent of $\Phi$ we obtain the following

\begin{proposition}\label{P:compact index}
The index of a Callias-type operator associated with an ungraded triple $(\Sigma,D,\Phi)$ over a {\em compact} manifold is equal to zero. 
\end{proposition}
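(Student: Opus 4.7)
The strategy is to show that on a compact manifold the $\tau$-index of $B_\Phi$ is independent of the admissible endomorphism $\Phi$, and then combine this with the sign-change identity \eqref{E:Phi=-Phi}. The first observation is that on a compact manifold $M$, condition~(ii) of Definition~\ref{D:ungraded admissible endomorphism} is vacuous: one may take the essential support $K$ to be all of $M$, so the inequality \eqref{E:ungraded admissible endomorphism} is required only on $M \setminus K = \emptyset$. Hence every self-adjoint endomorphism $\Phi$ of $\Sigma$ for which $[D, \Phi]$ is an endomorphism is admissible, and the set of such $\Phi$ is a real linear space.

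Next, I would connect $\Phi$ to $-\Phi$ by the linear path $\Phi_s := (1-2s)\Phi$, $s \in [0, 1]$, which consists entirely of admissible endomorphisms by the preceding observation. The corresponding family of Callias-type operators
\[
B_{\Phi_s} \ =\ \mathcal{D} + \mathcal{F}_{\Phi_s}
\]
differs from the self-adjoint elliptic operator $\mathcal{D}$ (which is $\tau$-Fredholm with $\tau$-compact resolvent on the compact manifold $M$) by a norm-continuous family of bounded self-adjoint endomorphisms. By homotopy invariance of the $\tau$-Fredholm index under such perturbations, $s \mapsto \ind_\tau B_{\Phi_s}$ is constant, so $\ind_\tau B_\Phi = \ind_\tau B_{-\Phi}$. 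Combining with \eqref{E:Phi=-Phi} yields $\ind_\tau B_\Phi = -\ind_\tau B_\Phi$, hence $\ind_\tau B_\Phi = 0$.

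The delicate point is the homotopy invariance of the $\tau$-index along $s \mapsto B_{\Phi_s}$. In the classical setting this is Kato's stability theorem for Fredholm operators; in the von~Neumann setting one needs the analogous fact for $\tau$-Fredholm operators perturbed by a bounded, self-adjoint, norm-continuous family of endomorphisms, with the base operator $\mathcal{D}$ having $\tau$-compact resolvent. This is a standard result in Breuer-Fredholm theory, but it should be invoked with an explicit reference, or alternatively derived in this setting by writing $B_{\Phi_s}$ in its bounded transform and checking continuity in the gap topology.
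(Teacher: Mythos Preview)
Your proposal is correct and follows exactly the approach the paper takes: the paper deduces the proposition from the sentence immediately preceding it (``Since the index of the Callias-type operator on a compact manifold is independent of $\Phi$\ldots'') together with \eqref{E:Phi=-Phi}. For the stability step you flag as delicate, the paper's own reference is \cite[Lemma~7.3]{BrCe15}, which is precisely the homotopy invariance of the $\tau$-index under bounded perturbations with fixed domain (cf.\ Remark~\ref{R:lambdaPhi}); you can cite that directly rather than appealing to general Breuer--Fredholm theory.
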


\begin{remark}
The operator \eqref{E:classical Callias-type operator} is a Callias-type operator in the sense of Definition~\ref{D:Callias}. Notice, however, that not every Callias-type operator is  associated with some triple $(\Sigma, D, \Phi)$. In fact, an operator \eqref{E:Callias} is associated with such a triple  if and only if there exists an isomorphism $E^+\to E^-$ which commutes with the leading symbol of $\mathcal{D}$ and anticommutes with the admissible endomorphism $\mathcal{F}$. 
\end{remark}

In the case $A=\CC$, Callias-type operators associated with a triple $(\Sigma,D,\Phi)$ were extensively studied, cf. for example, \cite{Callias78}, \cite{BruningMoscovici} and \cite{Anghel1993}.


\subsection{Callias-type Theorem.}\label{SS:Callias-type theorem}
Let $M$ be a complete oriented $n$-dimensional Riemannian manifold. Let $\Sigma\rightarrow M$ be an ungraded Dirac $A$-Hilbert bundle over $M$.
This means that $\Sigma$ is an $A$-Hilbert bundle of finite type endowed with a Clifford action $c:T^*M\to \End_A(\Sigma)$ of the cotangent bundle and a metric connection  $\nabla^\Sigma$ compatible with the inner product of the fibers and satisfying the Leibniz rule
(for the theory of connections on $A$-Hilbert bundles we refer to \cite[Sections~8.1-8.3]{Schick05l2}).
When $A=\CC$, our definition coincides with the classical notion of a Dirac bundle (see \cite[Definition~II.5.2]{lawson1989spin}). 

An operator $D:C^\infty_0(M,\Sigma)\to C^\infty(M,\Sigma)$ is called a {\em (generalized) Dirac operator} if 
\[
	[D,f]\ =\  c(df)\qquad \text{for all} \quad f\in C^\infty(M).
\]
Let $(e_1,\ldots,e_n)$ be an orthonormal basis of the tangent bundle, and $(e^1,\ldots,e^n)$ be the dual basis of $T^*M$. By \cite[\S3.3]{BeGeVe}, there exists $V\in \End_A(\Sigma)$  such that 
\begin{equation}\label{E:definition of Dirac operator}
	D\ = \ \sum_{i=1}^n\,c(e^i)\,\nabla^{\Sigma}_{e_i} \ + \ V.
\end{equation}
In this situation we refer to $D$ as the {\em Dirac operator associated with the connection $\nabla^\Sigma$ and the potential $V$}.

It follows from \eqref{E:definition of Dirac operator} that $D$ satisfies Assumption~\ref{A:uniformly bounded principal symbol}.
Suppose $\Phi\in\End_A(\Sigma)$ is admissible. Note that Condition (i) of Definition~\ref{D:admissible endomorphism} is equivalent in this case to the condition that 
\begin{equation}\label{E:[Phi,c]}
	[c(\xi),\Phi]\ = \ 0, \qquad\text{for all}\quad \xi\in T^*M.
\end{equation}
We denote by $B_\Phi$ the Callias-type operator associated with the triple $(\Sigma, D, \Phi)$.

Suppose that there is a partition $M=M_-\cup_N M_+$, where $N=M_-\cap M_+$ is a smooth compact hypersurface and $M_-$ is a compact submanifold, whose interior contains an essential support of $\Phi$.
Let $\Sigma_N$ be the  restriction of $\Sigma$ to $N\subset M$.
By Condition (ii) of Definition~\ref{D:ungraded admissible endomorphism}, zero is not in the spectrum of $\Phi(x)$ for all $x\in N$. Therefore 
\begin{equation}\label{E:decomposition of Sigma_N}
\Sigma_N=\Sigma_{N+}\oplus \Sigma_{N-},
\end{equation}
where the fiber of $\Sigma_{N+}$ (resp. $\Sigma_{N-}$) over $x\in N$ is the image of the spectral projection of $\Phi_N(x)$ corresponding to the interval $(0,\infty)$ (resp. $(-\infty,0)$).
By \eqref{E:[Phi,c]} the endomorphism $\Phi$ commutes with the Clifford multiplication. Hence $c(\xi):\Sigma_{N\pm}\to \Sigma_{N\pm}$ for all $\xi\in T^*M$. It follows that both bundles, $\Sigma_{N+}$ and $\Sigma_{N-}$, inherit  the Clifford action of $T^*M$.

In particular, the Clifford multiplication by the unit normal vector field pointing at the direction of $M_+$ defines an endomorphism $\gamma:\Sigma_{N\pm}\to \Sigma_{N\pm}$.
Since $\gamma^2=-1$, the endomorphism $\alpha:=-i \gamma$ induces a grading 
\begin{equation}\label{E:grading on SigmaN}
	\Sigma_{N\pm}\ = \ \Sigma_{N\pm}^+\oplus \Sigma_{N\pm}^-,
\end{equation}
where $\Sigma_{N\pm}$ is the span of the eigenvectors of $\alpha$ with eigenvalues $\pm1$.

We use the Riemannian metric on $M$ to identify $T^*N$ with a subspace of $T^*M$. Then the Clifford action of $T^*N$ on $\Sigma_{N\pm}$ is graded with respect to this grading, i.e. $c(\xi):\Sigma_\pm\to \Sigma_\mp$ for all $\xi\in T^*N$. 

Let $\nabla^{\Sigma_N}$ be the connection on $\Sigma_N$ obtained by restricting  the connection on $\Sigma$. It does not, in general, preserve decomposition~\eqref{E:decomposition of Sigma_N}. 
We define a connection $\nabla^{\Sigma_{N\pm}}$ on the bundle $\Sigma_{N\pm}$ by 
\begin{equation}\label{E:nSigmaN+}
	\nabla^{\Sigma_{N\pm}} s^\pm\ =\ 
	\pr_{\Sigma_{N\pm}}\left(\nabla^{\Sigma_N} s^\pm\right),
	\qquad s^\pm\in C^\infty(N,\Sigma_{N\pm}),
\end{equation}
where $\pr_{\Sigma_{N\pm}}$ is the projection onto the bundle $T^\ast N\tensor\Sigma_{N\pm}$. Since $\gamma$ commutes with both, the connection $\nabla^{\Sigma_N}$ and the projection $\pr_{\Sigma_{N\pm}}$, it also commutes with the connection $\nabla^{\Sigma_{N\pm}}$. In other words, $\nabla^{\Sigma_{N\pm}}$ is a graded connection with respect to the grading \eqref{E:grading on SigmaN}. 
By \cite[Lemma~2.7]{anghel1990} (see also Section~\ref{SS:cylindrical end}),  it is also compatible with the inner product of the fibers of $\Sigma_{N\pm}$ and satisfies the Liebniz rule.
Therefore, $\Sigma_{N\pm}$ carries a $\ZZ_2$-graded Dirac $A$-Hilbert bundle structure.

We denote by  $D_{N}:= D_{N+}\oplus D_{N-}$ the Dirac operator on $N$ associated with the connection $\nabla^{\Sigma_{N+}}\oplus\nabla^{\Sigma_{N-}}$ and the zero potential. Then
\begin{equation}\label{E:DN}
	D_{N\pm}\ := \ \sum_{i=1}^{n-1} c(e^i)\,\nabla^{\Sigma_{N\pm}}_{e_i},
\end{equation}
where $(e_1,\ldots,e_{n-1})$ is an orthonormal frame of $TN$ and $(e^1,\ldots,e^{n-1})$ is the dual frame of $T^*N$. The operators $D_{N\pm}$ are odd  with respect to the grading \eqref{E:grading on SigmaN}, i.e. they have the form 
\[
	D_{N\pm}\ = \ \begin{pmatrix}
	0&D_{N\pm}^-\\D_{N\pm}^+&0
	\end{pmatrix},
\]
where $D_{N\pm}^+$ (respectively $D_{N\pm}^-$) is the restriction of $D_{N\pm}$ to $\Sigma_{N\pm}^+$ (respectively $\Sigma_{N\pm}^-$). It is a classical fact that $D_{N\pm}$ are $\tau$-Fredholm (see for instance \cite[Section~7]{Schick05l2}) so that they have a well-defined $\tau$-index
\begin{equation}\label{E:indexDNpm}
	\ind_\tau D_{N\pm}\ = \ \dim_\tau\ker D_{N\pm}^+ \ -\
	\dim_\tau\ker D_{N\pm}^-. 
\end{equation}
The next theorem is the first main result of this paper. 
It generalizes \cite[Theorem~1.5]{Anghel1993} and \cite[Theorem~2.9]{Bunke1995} to the von Neumann setting.

\begin{theorem}[Callias-type theorem in von Neumann setting]\label{T:computation of odd-dimensional case}
\[
	\ind_\tau B_\Phi\ =\ \ind_\tau D_{N+}.
\]
\end{theorem}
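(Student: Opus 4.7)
The approach follows Anghel \cite{Anghel1993} and Bunke \cite{Bunke1995}, with the relative index theorem (Theorem~\ref{T:relative index theorem in von neumann setting}) playing the central role. The first step is a reduction to product structure near $N$: after a cut-and-paste deformation, I may assume without loss of generality that the metric, the bundle $\Sigma$, the connection $\nabla^\Sigma$, and the potential $\Phi$ all have product form on a collar $(-\epsilon,\epsilon)\times N$ of $N$. Such a deformation is realized by pairing $M$ with a copy of itself in Theorem~\ref{T:relative index theorem in von neumann setting} in a way that the two modifications cancel in pairs, leaving $\ind_\tau B_\Phi$ unchanged.

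The second step replaces $M_+$ by the half-infinite cylinder $[0,\infty)\times N$ carrying the product extension of the data on $N$, producing a complete manifold $\widehat{M}:=M_-\cup_N([0,\infty)\times N)$ with a Callias-type operator $\widehat{B}_\Phi$. Another application of the relative index theorem to the pair $(M,\widehat{M})$ cut along $N$, combined with Proposition~\ref{P:compact index} applied to a suitable compact double, yields $\ind_\tau B_\Phi = \ind_\tau\widehat{B}_\Phi$. On the cylindrical end, the product form gives $D=c(dt)(\partial_t+D_N)$; with $\alpha=-ic(dt)$ the grading operator of \eqref{E:grading on SigmaN}, the equation $(D\pm i\Phi)u=0$ becomes the ODE $\partial_t u=-(D_N\pm\alpha\Phi)u$ on $L^2(N,\Sigma_N)$. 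An $L^2$-solution on $[0,\infty)\times N$ is therefore determined by a boundary value at $t=0$ lying in the non-negative spectral subspace of the self-adjoint operator $H_\pm:=D_N\pm\alpha\Phi$. A spectral analysis of $H_\pm$, using that $\Phi$ commutes with $c(\xi)$ for $\xi\in T^\ast N$ by \eqref{E:[Phi,c]} and has definite sign on each of $\Sigma_{N+}$, $\Sigma_{N-}$, together with a Cauchy-data matching to $\widehat{B}_\Phi$ on $M_-$, identifies $\ind_\tau\widehat{B}_\Phi$ with the difference $\dim_\tau\ker D_{N+}^+-\dim_\tau\ker D_{N+}^-=\ind_\tau D_{N+}$.

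The main obstacle, flagged in Section~\ref{SS:ICallias index}, is the absence of a spectral gap of $B$ (and of the boundary operator $H_\pm$) around $0$ in the von Neumann setting. Proofs in the scalar case rely on enumerating discrete eigenvalues of $D_N$ to count $L^2$-solutions on the cylinder; this is unavailable here, and I would instead use spectral projections of $H_\pm$, functional calculus on $A$-Hilbert modules, and the continuity of $\dim_\tau$ under strong limits. The key technical statements to be proved are that the boundary-value map from $\ker\widehat{B}_\Phi^\pm\cap L^2$ to the non-negative spectral subspace of $H_\pm$ is a $\tau$-Fredholm map onto a closed subspace of $\tau$-finite codimension, and that the $\tau$-difference of those codimensions equals $\ind_\tau D_{N+}$; the latter step uses the $\tau$-Fredholm property of $D_{N+}$ recalled before \eqref{E:indexDNpm}. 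These ingredients replace the eigenvalue-counting arguments of the scalar case and constitute the principal new input beyond the strategy of \cite{Anghel1993} and \cite{Bunke1995}.
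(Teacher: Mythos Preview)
Your reduction steps---arranging product structure in a collar of $N$ and then replacing $M_+$ by a half-infinite cylinder via Theorem~\ref{T:relative index theorem in von neumann setting} and Proposition~\ref{P:compact index}---match the paper's strategy in Sections~\ref{S:cylindrical end}--\ref{S:general case}. The divergence is in the endgame: how to compute $\ind_\tau\widehat{B}_\Phi$ on the manifold with cylindrical end.

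You propose an APS-type analysis: write $(D\pm i\Phi)u=0$ on the cylinder as the ODE $\partial_t u=-(D_N\pm\alpha\Phi)u$, characterize $L^2$-solutions via spectral projections of $H_\pm=D_N\pm\alpha\Phi$, and then match Cauchy data with the interior $M_-$. This is where there is a genuine gap. The two ``key technical statements'' you list---that the boundary-value map is $\tau$-Fredholm onto a subspace of $\tau$-finite codimension, and that the difference of codimensions equals $\ind_\tau D_{N+}$---are exactly the hard content, and you only assert them. In the scalar case they follow from discreteness of $\spec(D_N)$ and explicit eigenvalue bookkeeping; in the von Neumann setting, without a spectral gap for $H_\pm$ at $0$, the boundary projections need not have closed range, the Calder\'on projector machinery is not available in this generality, and ``continuity of $\dim_\tau$ under strong limits'' does not by itself produce the required $\tau$-Fredholm statement. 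Making this route work would require an APS theory over von Neumann algebras, which is substantial and not developed in the paper or its references.

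The paper bypasses this entirely. It applies the relative index theorem \emph{once more} to the manifold with cylindrical end (cutting against $-M$ with a suitably chosen potential, Lemma~\ref{L:indPhi1=0}), reducing to the \emph{full} cylinder $N\times\RR$ with a model operator $\mathbf{M}$ (Definition~\ref{D:model operator}). On the full cylinder there is no boundary, no Cauchy data, and no APS projection; instead one computes $\mathbf{M}^2$ directly and obtains the tensor decomposition \eqref{E:M-M+Epm}--\eqref{E:M+M-Epm}, namely $\mathbf{M}_\mp\mathbf{M}_\pm|_{\widehat{E}_N^\pm}=D_N^2\otimes 1+1\otimes Q^\pm$ with $Q^\pm=(i\partial_r\mp ih)(i\partial_r\pm ih)\ge 0$. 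Since both summands are nonnegative, $\ker\mathbf{M}_\pm$ splits as a tensor product, and Lemma~\ref{L:kerQ} (a one-line ODE) gives $\dim\ker Q^+=1$, $\dim\ker Q^-=0$, whence $\ker\mathbf{M}_\pm\cong\ker D_N^\pm$ as $A$-Hilbert spaces (Corollary~\ref{C:kerM and kerD have the same tau dimension}). No spectral gap of $D_N$ is used anywhere. This is precisely the device, flagged in Section~\ref{SS:diffrence from C}, that replaces Anghel's inclusion \eqref{E:Anghel's proof3}; your proposal does not supply an analogous replacement.
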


From this theorem and \eqref{E:Phi=-Phi} we immediately obtain 
\begin{corollary}\label{C:Phi=-Phi}
$\ind_\tau{}B_\Phi= -\ind_\tau{}D_{N-}$. Hence, 
\begin{equation}\label{E:N+=-N-}
	\ind_\tau{}D_{N+}\ = \ -\ind_\tau{}D_{N-}
\end{equation}
\end{corollary}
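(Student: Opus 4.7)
My plan is to apply Theorem~\ref{T:computation of odd-dimensional case} to the admissible endomorphism $-\Phi$ and compare the result with what the theorem says for $\Phi$, using the sign relation \eqref{E:Phi=-Phi}.

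First, I would check that $-\Phi$ is admissible whenever $\Phi$ is: this is immediate from Definition~\ref{D:ungraded admissible endomorphism} since both conditions involve $\Phi$ through $\Phi^2$ or $\|[D,\Phi]\|$, which are unchanged under $\Phi\mapsto-\Phi$. The same compact set serves as an essential support. Consequently, $(\Sigma,D,-\Phi)$ is again an admissible ungraded triple.

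Next, I would track how the boundary geometry defined in Section~\ref{SS:Callias-type theorem} transforms under $\Phi\mapsto-\Phi$. The decomposition $\Sigma_N=\Sigma_{N+}\oplus\Sigma_{N-}$ was defined using the spectral projections of $\Phi|_N$ onto $(0,\infty)$ and $(-\infty,0)$, so swapping the sign of $\Phi$ interchanges these two subbundles. The Clifford action and the connection $\nabla^{\Sigma_N}$ do not depend on $\Phi$, hence the projected connections $\nabla^{\Sigma_{N\pm}}$ and the grading \eqref{E:grading on SigmaN} (defined by $-i\gamma$, which is independent of $\Phi$) likewise swap. Therefore the operator denoted $D_{N+}$ in the construction applied to $-\Phi$ coincides with the operator $D_{N-}$ in the construction applied to $\Phi$.

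Applying Theorem~\ref{T:computation of odd-dimensional case} to the triple $(\Sigma,D,-\Phi)$ thus gives $\ind_\tau B_{-\Phi}=\ind_\tau D_{N-}$. Combining this with \eqref{E:Phi=-Phi} yields
\[
	\ind_\tau B_\Phi \ =\ -\ind_\tau B_{-\Phi}\ =\ -\ind_\tau D_{N-},
\]
which is the first assertion. The identity \eqref{E:N+=-N-} is then obtained by equating the two expressions for $\ind_\tau B_\Phi$ provided by Theorem~\ref{T:computation of odd-dimensional case} and the first assertion. There is no real obstacle here; the only point requiring care is the bookkeeping that shows the $(-\Phi)$-construction of $D_{N+}$ produces exactly the original $D_{N-}$ and not some twisted variant, which follows because everything except the spectral projections is independent of the sign of $\Phi$.
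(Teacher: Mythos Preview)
Your proof is correct and follows exactly the approach the paper intends: apply Theorem~\ref{T:computation of odd-dimensional case} to $-\Phi$, observe that the spectral decomposition of $\Sigma_N$ swaps so that the resulting boundary operator is $D_{N-}$, and combine with \eqref{E:Phi=-Phi}. The paper states the corollary as immediate from these two ingredients, and your write-up simply spells out the bookkeeping.
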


\begin{remark}\label{R:Phi=-Phi}
Equality \eqref{E:N+=-N-} might look a bit surprising, since at the first glance the bundles $\Sigma_+$ and $\Sigma_-$ might look completely unrelated. However, the fact that both operators, $D_{N+}$ and $D_{N-}$, are induced by the same operator $D$ puts a strong relation between them. In fact, the direct sum $D_{N+}\oplus{}D_{N-}$ is {\em cobordant to 0} in the sense of \cite{Br-cob}, and the cobordism is given by the operator $D$. Thus the equality \eqref{E:N+=-N-} can be viewed as a version of the cobordism invariance of the index. In fact, it is, in a sense, equivalent to the cobordism invariance, as explained in the next subsection.
\end{remark}

If $\Phi$ is an admissible function for $\Sigma$ then so is $\lambda{}\Phi$ for all $\lambda>1$. As an immediate corollary of Theorem~\ref{T:computation of odd-dimensional case} we obtain the following

\begin{corollary}\label{C:lambdaPhi}
The index $\ind_\tau{}B_{\lambda \Phi}$ is independent of $\lambda\ge 1$. 
\end{corollary}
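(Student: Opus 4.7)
The plan is to deduce the corollary directly from the Callias-type theorem (Theorem~\ref{T:computation of odd-dimensional case}) by showing that the right-hand side $\ind_\tau D_{N+}$ does not depend on the scaling factor $\lambda \ge 1$.

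First I would verify that if $\Phi$ is admissible for the pair $(\Sigma,D)$ with essential support $K$ and constant $d>0$, then $\lambda\Phi$ is admissible with the same essential support $K$ (and constant $\lambda^{2}d$) for every $\lambda\geq 1$. Condition (i) of Definition~\ref{D:ungraded admissible endomorphism} is clear since $[D,\lambda\Phi]=\lambda[D,\Phi]$ is an endomorphism. For condition (ii), on $M\setminus K$ one computes
\[
(\lambda\Phi)^{2}(x)\ =\ \lambda^{2}\Phi^{2}(x)\ \geq\ \lambda^{2}d+\lambda^{2}\|[D,\Phi](x)\|\ \geq\ \lambda^{2}d+\lambda\|[D,\Phi](x)\|,
\]
where in the last step we used $\lambda\geq 1$ and $\|[D,\Phi](x)\|\geq 0$. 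The right-hand side equals $\lambda^{2}d+\|[D,\lambda\Phi](x)\|$, so $\lambda\Phi$ satisfies the required inequality with constant $\lambda^{2}d$. In particular, the hypersurface $N$ bounding a compact region that contains the essential support of $\Phi$ also bounds a compact region containing the essential support of $\lambda\Phi$, so Theorem~\ref{T:computation of odd-dimensional case} applies to $B_{\lambda\Phi}$ relative to the same $N$.

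Next I would observe that the data used to construct $D_{N+}$ in Section~\ref{SS:Callias-type theorem} depends on $\Phi|_{N}$ only through the sign of its spectrum. Explicitly, $\Sigma_{N+}$ is the image of the spectral projection of $\Phi|_{N}$ onto $(0,\infty)$; but the spectral projections of $\lambda\Phi|_{N}$ onto $(0,\infty)$ coincide with those of $\Phi|_{N}$, because multiplication by $\lambda>0$ preserves the sign of spectral values. Hence the decomposition \eqref{E:decomposition of Sigma_N}, the grading \eqref{E:grading on SigmaN} (which is determined by the normal Clifford multiplication $\gamma$, not by $\Phi$), the projection $\pr_{\Sigma_{N\pm}}$, and therefore the induced connection $\nabla^{\Sigma_{N\pm}}$ of \eqref{E:nSigmaN+} are all unchanged when $\Phi$ is replaced by $\lambda\Phi$. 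Consequently the operator $D_{N+}$ defined by \eqref{E:DN} is the same for $\Phi$ and for $\lambda\Phi$.

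Finally, applying Theorem~\ref{T:computation of odd-dimensional case} once to the triple $(\Sigma,D,\Phi)$ and once to $(\Sigma,D,\lambda\Phi)$ yields
\[
\ind_\tau B_{\lambda\Phi}\ =\ \ind_\tau D_{N+}\ =\ \ind_\tau B_{\Phi},
\]
which proves the claim. There is no real obstacle here; the only point that requires a brief verification is the scaling argument for admissibility, and the conceptual observation that the boundary operator $D_{N+}$ depends on $\Phi$ only through its sign structure on $N$.
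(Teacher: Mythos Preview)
Your proof is correct and follows exactly the approach the paper intends: the paper states the result as ``an immediate corollary of Theorem~\ref{T:computation of odd-dimensional case}'' after noting that $\lambda\Phi$ is admissible whenever $\Phi$ is, and you have simply spelled out the implicit details (the scaling check for admissibility and the observation that $D_{N+}$ depends only on the sign of the spectrum of $\Phi|_N$).
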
 
\begin{remark}\label{R:lambdaPhi}
If the endomorphism $\Phi$ is bounded then the domain of the operator $B_{\lambda \Phi}$ is independent of $\lambda$ and the corollary follows directly from the stability of the $\tau$-index, cf. \cite[Lemma~7.3]{BrCe15}. However, if $\Phi$ is not bounded the domain of $B_{\lambda \Phi}$ depends on $\lambda$ and the corollary is not {\em a priori} obvious. 
\end{remark}

\subsection{The cobordism invariance}\label{SS:cobordism}
We now introduce a class of non-compact cobordisms similar to those considered in \cite{GGK96},\cite{GGK-book},\cite{Br-index},\cite{BrCano14},\cite{BravermanShi}. Then we show that the index is invariant under this type of cobordisms. To make the exposition simpler we will only discuss the {\em null-cobordism} (i.e. the cobordism between a manifold and the empty set). We show that the index of a null-cobordant operator vanishes. A standard argument, cf. for example \cite[Remark~2.10]{BravermanShi}, shows that this is equivalent to the cobordism invariance of the index.

\newcommand{\oSigma}{\overline{\Sigma}}
\newcommand{\oD}{\overline{D}}
\newcommand{\oPhi}{\overline{\Phi}}
\begin{definition}\label{D:cobordismSigma}
Let $\Sigma\to M$ be a Dirac bundles over a complete oriented Riemannian manifold $M$. Let $D$  be a (generalized) Dirac operator on $\Sigma$.

A triple $(W,\oSigma,\oD)$, where $W$ is a complete manifold with boundary $\partial{W}\simeq M$, $\oSigma$ is a Dirac $A$-bundle over $W$, and $\oD$ is a Dirac operator on $\oSigma$, is called a {\em null-cobordism} of $D$ if the following conditions are satisfied:
\begin{enumerate}[label=(\roman*)]
\item 
There is an open neighborhood $U$ of\/ $\partial{W}$ and a metric-preserving diffeomorphism
\begin{equation}\label{E:diffeo}
    \phi: M\times(-\epsilon,0] \ \to \ U.
\end{equation}
\item 
Let $\widehat{\Sigma}\simeq \Sigma\times\RR$ denote the lift of $\Sigma$ to $M\times\RR$. Then the bundle $\widehat{\Sigma}\oplus\widehat{\Sigma}$ has a natural structure of a Dirac bundle over $M\times\RR$, cf. Section~\ref{SS:cylindrical end}. We assume that the restriction of $\oSigma$ to $M\times(-\epsilon,0]\subset U$ is isomorphic to $\widehat{\Sigma}\oplus\widehat{\Sigma}$.
\item
Let $t$ denote the coordinate on $\RR$. Consider the operator  
\[
	\widehat{D}\ := \ 
	\begin{pmatrix}
	i\frac{\partial}{\partial t}&D\\D&-i\frac{\partial}{\partial t}
	\end{pmatrix}:\,
	C^\infty_0(M\times\RR,\widehat{\Sigma}\oplus \widehat{\Sigma})
	 \ \to \ 
	C^\infty_0(M_i\times\RR,\widehat{\Sigma}\oplus \widehat{\Sigma}).
\]
Then the restriction of $\oD$ to $M\times(-\epsilon,0]$ is equal to the restriction of $\widehat{D}$ to the same set. 
\end{enumerate}  
\end{definition}

Notice, that the notion of non-compact cobordism is not very useful by itself. For example for any triple $(M,\Sigma,D)$ the pair $(M\times[0,\infty),\Sigma\times[0,\infty),\widehat{D})$ is a null-cobordism of $(M,\Sigma,D)$. However, if one considers cobordisms carrying some extra structure, like in the next definition, the notion of non-compact cobordism becomes useful and non-trivial.

\begin{definition}\label{D:calliascob}
Let $\Sigma\to M$ be a Dirac bundle over a complete oriented Riemannian manifolds $M$ and let $D$ be a (generalized) Dirac operator on $\Sigma$.  Suppose $\Phi\in \End_A(\Sigma)$ is an  admissible endomorphism for $(\Sigma,D)$. Let $B_{\Phi}$  be the corresponding Callias-type operator.  A {\em null-cobordism} of  $B_{\Phi}$  consists of the following data:
\begin{enumerate}[label=(\roman*)]
\item 
A null-cobordism $(W,\oSigma,\oD)$ of $D$;
\item 
A bundle map $\oPhi\in \End_A(\oSigma)$, such that the restriction of $\oPhi$ to $M\subset \partial{}W$ is equal to 
\[
 \begin{pmatrix}
	\Phi&0\\0&\Phi 
\end{pmatrix} \ \in\ \End_A\big(\widehat{\Sigma}\oplus\widehat{\Sigma}\big)
\] 
and there exist a constant $d>0$ and  a compact set $K\subset W$ such that Inequality~\eqref{E:ungraded admissible endomorphism} holds for all $x\in W\setminus{}K$. 
\end{enumerate}
The operator $B_\Phi$ is called {\em null-cobordant} if there exists a null-cobordism of $B_\Phi$. 

%
\end{definition}

\begin{theorem}\label{T:cobordism invariance}
The $\tau$-index of a null-cobordant Callias-type operator is equal to 0. 
\end{theorem}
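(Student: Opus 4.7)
My plan is to deduce cobordism invariance from the Callias-type theorem (Theorem~\ref{T:computation of odd-dimensional case}) by extending $W$ to a complete manifold without boundary and applying the Callias theorem to the extended operator.

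\emph{Extension.} Form $\widetilde W := W\cup_M(M\times[0,\infty))$ by gluing a half-cylinder along $\partial W$ via the collar of Definition~\ref{D:cobordismSigma}(i). Parts~(ii)--(iii) of that definition, together with Definition~\ref{D:calliascob}(ii), extend $(\overline\Sigma,\overline D,\overline\Phi)$ to a triple $(\widetilde\Sigma,\widetilde D,\widetilde\Phi)$ on $\widetilde W$ that on the cylinder equals $(\widehat\Sigma\oplus\widehat\Sigma,\widehat D,\Phi\oplus\Phi)$. Since $\Phi\oplus\Phi$ on the cylinder does not have compact essential support ($K\times[0,\infty)$ is not compact), I modify $\widetilde\Phi$ there for $t\ge 1$ by adding $\chi(t)\cdot\mathrm{id}_{\widehat\Sigma\oplus\widehat\Sigma}$, with $\chi$ vanishing on $[0,1/2]$ and growing to infinity sufficiently fast. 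A direct check gives $[\widehat D,\chi(t)\,\mathrm{id}] = \mathrm{diag}(i\chi'(t),-i\chi'(t))$, which is an endomorphism, so Condition~(i) of Definition~\ref{D:ungraded admissible endomorphism} is preserved; and $\widetilde\Phi^2$ dominates the commutator norm outside a compact $\widetilde K\subset\widetilde W$, so $\widetilde\Phi$ is admissible with compact essential support. (For unbounded $\Phi$, a preliminary spectral truncation of $\Phi$ outside its essential support reduces to the bounded case by a stability argument in the spirit of Corollary~\ref{C:lambdaPhi}.)

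\emph{Callias computation.} I apply Theorem~\ref{T:computation of odd-dimensional case} to $B_{\widetilde\Phi}$ on $\widetilde W$ with a compact separating hypersurface $N$ in the cylinder at $t=t_0$, chosen so that $\chi(t_0)$ exceeds $\|\Phi\|$. The Clifford action of the unit normal $dt$ on $\widehat\Sigma\oplus\widehat\Sigma$ is $\gamma = \mathrm{diag}(i,-i)$, so the grading $\alpha = -i\gamma = \mathrm{diag}(1,-1)$ interchanges the two summands. Writing $\Sigma = \Sigma_+\oplus\Sigma_-$ for the spectral decomposition of the endomorphism $\Phi+\chi(t_0)$ on $\Sigma$, we obtain $\widetilde\Sigma_{N+} = \Sigma_+\oplus\Sigma_+$, with $\widetilde\Sigma_{N+}^+ = \Sigma_+\oplus 0$ and $\widetilde\Sigma_{N+}^- = 0\oplus\Sigma_+$. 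A computation identical in structure to the one at the end of Section~\ref{SS:Callias-type theorem} then yields $D_{N+}\cong\begin{pmatrix}0 & D_+\\ D_+ & 0\end{pmatrix}$ on $\Sigma_+\oplus\Sigma_+$ for a common induced Dirac-type operator $D_+$ on $\Sigma_+$ over $M$. Hence $\ind_\tau D_{N+}=\dim_\tau\ker D_+ - \dim_\tau\ker D_+ = 0$ and Theorem~\ref{T:computation of odd-dimensional case} gives $\ind_\tau B_{\widetilde\Phi}=0$.

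\emph{Identification with $\ind_\tau B_\Phi$.} To conclude $\ind_\tau B_\Phi=0$, I still need $\ind_\tau B_{\widetilde\Phi}=\ind_\tau B_\Phi$. This is the step where the relative index theorem (Theorem~\ref{T:relative index theorem in von neumann setting}) should intervene: I compare $B_{\widetilde\Phi}$ on $\widetilde W$ with an auxiliary Callias operator built from the same local data on a second manifold sharing a tubular neighborhood of a hypersurface with $\widetilde W$, and arrange the cut-and-paste rearrangement (by choosing the shared neighborhood inside $W$ rather than along the trivial cylindrical product, where pastings leave both manifolds invariant) so that one of the two produced operators has index equal to $\ind_\tau B_\Phi$ (via the cylindrical product structure identifying it with a natural extension of $B_\Phi$) while the other has vanishing index by an argument parallel to Step~2.

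The principal obstacles are the admissible extension of Step~1 when $\Phi$ is unbounded (requiring a spectral truncation that preserves $\ind_\tau B_\Phi$), and the careful bookkeeping in Step~3: a naive cut-and-paste along the product hypersurface $M\times\{0\}$ is trivial (both rearranged manifolds coincide with the originals), so the comparison must be arranged inside $W$ where the operators genuinely differ.
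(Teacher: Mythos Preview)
Your Steps~1--2 essentially reprove the paper's Proposition~\ref{P:compact cobordism} (the compact cobordism case), and that part is fine \emph{when $M$ is compact}. The fatal gap is that your Step~2 invokes Theorem~\ref{T:computation of odd-dimensional case} with the hypersurface $N=M\times\{t_0\}$ inside the cylinder, but the Callias theorem requires both $N$ and the inside region $\widetilde W_-$ to be \emph{compact}. When $M$ is non-compact (the only interesting case, since for compact $M$ the result is immediate from Proposition~\ref{P:compact index}), neither $M\times\{t_0\}$ nor $W\cup_M(M\times[0,t_0])$ is compact, and the theorem simply does not apply.

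Your Step~3 does not repair this: you propose to show $\ind_\tau B_{\widetilde\Phi}=\ind_\tau B_\Phi$ via the relative index theorem, but $B_{\widetilde\Phi}$ lives on an $(n{+}1)$-manifold while $B_\Phi$ lives on an $n$-manifold. The relative index theorem only compares operators obtained by cutting and pasting along a common compact hypersurface in manifolds of the \emph{same} dimension; it cannot by itself manufacture a Callias operator on an $(n{+}1)$-manifold whose $\tau$-index equals that of $B_\Phi$. Your sketch (``one of the two produced operators has index equal to $\ind_\tau B_\Phi$ via the cylindrical product structure'') would need a model-operator identity on $M\times\RR$ analogous to Proposition~\ref{P:separation of variables in the von Neumann setting}, but that proposition is for a Dirac operator on a compact base, not a Callias operator on a non-compact one.

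The paper runs the argument in the opposite direction: rather than going \emph{up} to $\widetilde W$, it first goes \emph{down} via Theorem~\ref{T:computation of odd-dimensional case} applied on $M$ itself, using a genuinely compact hypersurface $N\subset M$ bounding an essential support of $\Phi$, to get $\ind_\tau B_\Phi=\ind_\tau D_{N+}$. The point is that the null-cobordism $(W,\overline\Sigma,\overline D,\overline\Phi)$ then restricts: one chooses $\Omega\subset W$ with compact closure containing the essential support of $\overline\Phi$ and with $\overline N:=\partial\Omega$ meeting $\partial W=M$ transversely in $N$, and $(\overline N,\overline\Sigma_{\overline N+},\overline D_{\overline N+})$ is a \emph{compact} null-cobordism of $D_{N+}$. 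Now your Steps~1--2 (equivalently, Proposition~\ref{P:compact cobordism}) apply legitimately to $D_{N+}$ and give $\ind_\tau D_{N+}=0$.
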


In the case when the manifold $M$ is compact the theorem implies the cobordism invariance of the $\tau$-index of  Dirac operators on compact manifolds. In fact, as a part of the proof of Theorem~\ref{T:cobordism invariance}, in Section~\ref{S:cobordism} we give a new proof of the cobordism invariance of the $\tau$-index on compact manifolds. 

\begin{remark}\label{R:equivalence of theorems}
In this paper we first prove the relative index and the Callias-type index theorems and then use them to prove the cobordism invariance of the index. The opposite order of arguments is also possible. In fact, for the case $A=\CC$, i.e. for operators acting on finite dimensional vector bundles, the direct proof of the cobordism invariance of the Callias index was given in \cite{BravermanShi}. The proof in \cite{BravermanShi} can be adapted to our current situation using a method similar to the one used in Section~\ref{S:cylinder}. Then, as it is explained in  \cite{BravermanShi}, the cobordism invariance of the index implies the relative index theorem. 
\end{remark}


\subsection{The{ $\Gamma$}-index Theorem}\label{SS:the Gamma-index theorem}

Let $M$ be a complete oriented Riemannian manifold and let $S\rightarrow M$ be an ungraded finite dimensional Dirac bundle.%
\footnote{In other words in this section we assume that $A=\CC$. We use the notation $S$ (rather than $\Sigma$) for the Dirac bundle, to stress the difference from the other sections where $\Sigma$ was a Hilbert $A$-bundle.}
Let $D$ be a Dirac operator on $S$, cf. \eqref{E:definition of Dirac operator}. Let $\Phi$ be an admissible endomorphism of $\Sigma$.
We also suppose that $\widetilde{M}$ is a Galois cover of $M$ with deck transformation group $\Gamma$.
The bundle $S$ lifts to a bundle $\widetilde{S}\rightarrow\widetilde{M}$ and the Callias-type operator $B_\Phi:=\mathcal{D}+\mathcal{F}_\Phi$ defined by~(\ref{E:classical Callias-type operator}) lifts to a $\Gamma$-equivariant first order elliptic operator $\widetilde{B_\Phi}:=\widetilde{\mathcal{D}}+\widetilde{\mathcal{F}_\Phi}$ acting on smooth sections of $\widetilde{S}\oplus\widetilde{S}$.

The $\Gamma$-action on $C^\infty_c(\widetilde{M},\widetilde{S})$ induces an action of $\Gamma$ on the Hilbert space $L^2(\widetilde{M},\widetilde{S})$.
Fix a closed $\Gamma$-invariant subspace $L$ of $L^2(\widetilde{M},\widetilde{S})$.
Denote by $P_L$ the orthogonal projection onto $L$ and by $K_L(\cdot,\cdot)$ the Schwartz kernel of $P_L$.
For $x\in\widetilde{M}$,  $K_L(x,x)$ is an endomorphism of the finite dimensional complex vector space $\widetilde{S}_x$ and has a well-defined trace $\tr K_L(x,x)\in [0,\infty]$. 
The $\Gamma$-dimension of $L$ is defined by the formula
\begin{equation}\label{E:Gamma-dimension}
\dim_\Gamma (L):=\int_\Omega \tr K_L(x,x)\,d\widetilde{\mu}(x),
\end{equation}
where $\Omega\subset\widetilde{M}$ is a fundamental domain for the action of $\Gamma$ and 
$d\widetilde{\mu}$ is the positive $\Gamma$-invariant measure induced by the Riemannian metric on $M$ lifted to $\widetilde{M}$.
Equation (\ref{E:Gamma-dimension}) defines a dimension function on the set of closed $\Gamma$-invariant subspaces of $L^2(\widetilde{M},\widetilde{S})$. We say that $L$ has a {\em finite von Neuman dimension} if $\dim_\Gamma(L)<\infty$.

In Section~\ref{SS:Galois covers}, we show that $\widetilde{B_\Phi}$ can be regarded as a Callias-type operator constructed out of  a suitable ungraded Dirac $\NGam$-bundle $W$, where $\NGam$ is the group von Neumann algebra of $\Gamma$ (see Section~\ref{SS:Galois covers}).
As a consequence of \cite[Theorem~2.3]{BrCe15} we deduce the following

\begin{lemma}\label{L:Gamma index of Callias-type operators}
The closed $\Gamma$-invariant subspaces $\ker\, (\widetilde{D}\pm i\widetilde{\Phi})\subset L^2(\widetilde{M},\widetilde{V}^\pm)$ have finite von Neumann dimension.
\end{lemma}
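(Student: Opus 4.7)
The plan is to reinterpret $\widetilde{B_\Phi}$ as a Callias-type operator on the \emph{compact-at-infinity} base $M$ with coefficients in an appropriate bundle of $\mathcal{N}\Gamma$-modules, and then invoke the general $\tau$-Fredholm result \cite[Theorem~2.19]{BrCe15} together with the standard identification between $\Gamma$-dimension upstairs and $\tau$-dimension downstairs.

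First, following the construction to be given in Section~\ref{SS:Galois covers}, I form the bundle of $\mathcal{N}\Gamma$-Hilbert modules $W:=\widetilde{S}\times_{\Gamma}\ell^2(\Gamma)\to M$, together with its lift $W\oplus W$ induced by $\widetilde{S}\oplus\widetilde{S}$. Smooth compactly supported sections of $W$ correspond canonically to $\Gamma$-equivariant smooth sections of $\widetilde{S}$, and this correspondence intertwines the lifted operators $\widetilde{D}$ and $\widetilde{\Phi}$ with operators $D^{W}$ and $\Phi^{W}$ on $W$. The key observation is that $(W,D^{W},\Phi^{W})$ satisfies all the hypotheses of Definition~\ref{D:ungraded admissible endomorphism}: $D^{W}$ is a formally self-adjoint generalized Dirac operator, Assumption~\ref{A:uniformly bounded principal symbol} descends from the corresponding property of $D$ on $M$, the commutator $[D^{W},\Phi^{W}]$ is an endomorphism because $[D,\Phi]$ is one, and the Callias inequality~\eqref{E:ungraded admissible endomorphism} descends verbatim because both $\Phi^2$ and $\|[D,\Phi]\|$ are $\Gamma$-invariant pointwise quantities. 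Hence the associated operator $B_{\Phi^{W}}$, in the sense of Definition~\ref{D:classical Callias-type operator}, is a bona-fide Callias-type operator over $M$ in the $\mathcal{N}\Gamma$-setting.

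Second, by \cite[Theorem~2.19]{BrCe15} the Callias-type operator $B_{\Phi^{W}}$ is $\tau$-Fredholm, where $\tau$ is the canonical trace on $\mathcal{N}\Gamma$. Consequently, the kernels of $D^{W}\pm i\Phi^{W}$ inside $L^{2}(M,W)$ have finite $\tau$-dimension. It remains to transport this statement back to $\widetilde{M}$. For this I use the standard fact (see, e.g., \cite[Section~1.1.3]{lueck2002l2} or Section~\ref{SS:Galois covers}) that the correspondence between $\Gamma$-invariant closed subspaces of $L^{2}(\widetilde{M},\widetilde{S})$ and closed $\mathcal{N}\Gamma$-submodules of $L^{2}(M,W)$ is an isomorphism of dimension functions: under this correspondence $\dim_{\Gamma}(\cdot)=\dim_{\tau}(\cdot)$. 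Since $\ker(\widetilde{D}\pm i\widetilde{\Phi})$ are precisely the $\Gamma$-invariant subspaces of $L^{2}(\widetilde{M},\widetilde{S})$ matched with $\ker(D^{W}\pm i\Phi^{W})$, they have finite $\Gamma$-dimension as required.

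The principal technical step is the first one: verifying that the bundle-of-modules construction indeed produces an admissible triple over $M$ with the right Fredholm-theoretic content. Once that identification is in place (and this is essentially done in Section~\ref{SS:Galois covers}), the conclusion follows directly from the $\tau$-Fredholm property already proved in~\cite{BrCe15}. No genuinely new analytic input is needed for this lemma beyond that identification.
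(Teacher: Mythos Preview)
Your proposal is correct and follows essentially the same route as the paper: form the Mishchenko--Fomenko bundle $W\cong S\otimes H_\Gamma$ over $M$, verify that the induced triple is admissible (the paper does this in Lemma~\ref{L:PhiH is admisssible}), invoke the $\tau$-Fredholm result of \cite{BrCe15}, and then use the isometry $\mathcal{I}$ of Section~\ref{SS:Galois covers} (properties (b)--(d)) to identify $\dim_\Gamma\ker(\widetilde{D}\pm i\widetilde{\Phi})$ with $\dim_\tau\ker(D_{H_\Gamma}\pm i\Phi_{H_\Gamma})$. The paper packages the transport step as Lemma~\ref{L:tau-fredholmness of B lifted to Galois covers}, but the content is exactly what you outlined.
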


Define the $\Gamma$-index of $\widetilde{B_\Phi}$ by
\[
\ind_\Gamma\widetilde{B_\Phi}:=\dim_\Gamma \ker (\widetilde{D}+ i\widetilde{\Phi})-\dim_\Gamma \ker (\widetilde{D}- i\widetilde{\Phi}).
\]

The second main result of this paper is the following generalization of Atiyah's $\Gamma$-index theorem (cf. \cite[Formula~(1.1)]{Atiyah76vN}) to operators of Callias-type.

\begin{theorem}[$\Gamma$-index theorem for Callias-type operators]\label{T:Gamma-index theorem in the odd-dimensional case}
Let $S$ be an ungraded Dirac bundle over a complete oriented Riemannian manifold $M$ and let $\Phi$ be an admissible self-adjoint endomorphism of $S$. 
If $\widetilde{M}$ is a Galois cover of $M$, then
\begin{equation}\label{E:gamma index theorem for Callias-type operators}
\ind_\Gamma \widetilde{B_\Phi}=\ind B_\Phi,
\end{equation}
where $\Gamma$ is the group of the deck transformations of $\widetilde{M}\rightarrow M$.
\end{theorem}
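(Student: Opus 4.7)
The plan is to deduce the $\Gamma$-index theorem by combining three ingredients already at hand: the Callias-type theorem (Theorem~\ref{T:computation of odd-dimensional case}) applied in \emph{two} different guises, together with the classical Atiyah $L^2$-index theorem on compact manifolds. The key observation is that the Callias-type theorem reduces every index computation on the non-compact manifold $M$ (or $\widetilde{M}$) to an index computation on a \emph{compact} hypersurface, where Atiyah's original result is available.

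First, choose a compact hypersurface $N\subset M$ that bounds a compact submanifold $M_-$ whose interior contains an essential support of $\Phi$, as in the setup of Theorem~\ref{T:computation of odd-dimensional case}. Applying that theorem to the classical triple $(S,D,\Phi)$ yields
\begin{equation*}
    \ind B_\Phi \ = \ \ind D_{N+},
\end{equation*}
where $D_{N+}$ is the graded Dirac operator on $N$ associated with the decomposition \eqref{E:decomposition of Sigma_N} and the grading \eqref{E:grading on SigmaN}. Since $N$ is compact and $\widetilde{N}:=\pi^{-1}(N)$ is a Galois $\Gamma$-cover of $N$, the original Atiyah $L^2$-index theorem (\cite{Atiyah76vN}) applies to $D_{N+}$ and gives
\begin{equation*}
    \ind_\Gamma \widetilde{D_{N+}} \ = \ \ind D_{N+}.
\end{equation*}

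Next, I would invoke the identification (Section~\ref{SS:Galois covers}, used already in Lemma~\ref{L:Gamma index of Callias-type operators}) that realizes $\widetilde{B_\Phi}$ as a Callias-type operator on $M$ with values in an ungraded Dirac $\NGam$-Hilbert bundle $W$ of finite type, and such that the $\Gamma$-index of $\widetilde{B_\Phi}$ coincides with the $\tau_\Gamma$-index of this operator viewed downstairs. The admissible endomorphism, the potential $\Phi$, the hypersurface $N$, and the spectral decomposition \eqref{E:decomposition of Sigma_N} all pull back verbatim to the $\NGam$-setting, and the induced boundary operator on $N$ is precisely the downstairs avatar of $\widetilde{D_{N+}}$. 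Applying Theorem~\ref{T:computation of odd-dimensional case} in this $\NGam$-setting therefore yields
\begin{equation*}
    \ind_\Gamma \widetilde{B_\Phi} \ = \ \ind_{\tau_\Gamma} \bigl(\text{boundary operator on } N\bigr) \ = \ \ind_\Gamma \widetilde{D_{N+}}.
\end{equation*}
Chaining the three displayed equalities gives \eqref{E:gamma index theorem for Callias-type operators}.

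The main obstacle is the middle step: verifying carefully that the interpretation of $\widetilde{B_\Phi}$ as a von Neumann Callias-type operator in the sense of Definition~\ref{D:Callias} satisfies Assumption~\ref{A:uniformly bounded principal symbol} and that the associated endomorphism is admissible with the same essential support as $\Phi$, so that Theorem~\ref{T:computation of odd-dimensional case} indeed applies. One must also check that the boundary bundle and connection constructed in \eqref{E:decomposition of Sigma_N}--\eqref{E:nSigmaN+} from the $\NGam$-data canonically coincide, via the same identification, with the lift to $\widetilde{N}$ of the bundle and connection obtained from the classical data $(S,D,\Phi)$; this is a naturality check but needs to be spelled out to be sure that the two applications of the Callias-type theorem match up with Atiyah's theorem on $N$. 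Once this naturality is in place, the theorem follows with no further analysis.
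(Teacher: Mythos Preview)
Your proposal is correct and follows essentially the same route as the paper: reduce both $\ind B_\Phi$ and $\ind_\Gamma\widetilde{B_\Phi}$ to indices on the compact hypersurface $N$ via Theorem~\ref{T:computation of odd-dimensional case}, and then invoke Atiyah's $L^2$-index theorem on $N$. The paper organizes this slightly differently---it first proves a general twisted index theorem for arbitrary flat $A$-Hilbert bundles (Theorem~\ref{T:twisted index theorem for flat bundles}), using Schick's formulation \cite[Theorem~7.30 and Corollary~5.13]{Schick05l2} of Atiyah's theorem on $N$, and then specializes to $H_\Gamma=\widetilde M\times_\Gamma l^2(\Gamma)$---but the logical skeleton and the naturality checks you flag (handled in the paper by Lemma~\ref{L:PhiH is admisssible} and Lemma~\ref{L:tau-fredholmness of B lifted to Galois covers}) are the same.
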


\subsection{Idea of the proof}
We prove Theorem~\ref{T:Gamma-index theorem in the odd-dimensional case} by reduction to the compact case.
We choose a compact hypersurface $N$ lying outside of an essential support of $\Phi$ and use Theorem~\ref{T:computation of odd-dimensional case} to construct a $\ZZ_2$-graded Dirac bundle $E\rightarrow N$ and a flat Dirac $\NGam$-bundle $W\rightarrow N$ such that

\begin{equation}\label{E:gamma index theorem for Callias-type operators2}
\ind B_\Phi=\ind D,\qquad \ind_\Gamma \widetilde{B_\Phi}=\ind_\tau D_W,
\end{equation}
where $D$ (resp. $D_W$) is the graded Dirac operator associated with $E$ (resp. $E\tensor W$).
The Atiah's $L^2$-index theorem  \cite{Atiyah76vN} (see also \cite{Schick05l2}) implies that 
\begin{equation}\label{E:gamma index theorem for Callias-type operators3}
\ind D\,=\,\ind_\tau D_W.
\end{equation}
Finally, formula~(\ref{E:gamma index theorem for Callias-type operators}) follows from (\ref{E:gamma index theorem for Callias-type operators2}) and (\ref{E:gamma index theorem for Callias-type operators3}).


\section{The relative index theorem}\label{S:relative index theorem}

In this section we prove Theorem~\ref{T:relative index theorem in von neumann setting}.
We adapt to the von Neumann setting the $K$-theoretical argument used by Bunke in \cite{Bunke1995}.


\subsection{Idea of the Proof}
Let $M_j$, $E_j$, $B_j:=\mathcal{D}_j+\mathcal{F}_j$ denote the same objects as in Section~\ref{SS:A relative index theorem} and set $M:=M_1\sqcup M_2\sqcup M_3\sqcup M_4$.
We use $E_1$, $E_2$, $E_3$ and $E_4$ to construct the bundle
\begin{equation}\label{E:bundle E_1 oplus E_2 oplus E_3^op oplus E_4^op}
E:=E_1\oplus E_2\oplus E_3^\textrm{op}\oplus E_4^\textrm{op}
\end{equation}
over $M$, where the superscript ``op" means that we consider the opposite grading on the fibers of $E_3$ and $E_4$ (see also Section~\ref{SS:construction of the operator B} for this notation).
Consider the Callias-type operator 
\begin{equation}\label{E:decomposition of B}
B:=B_1\oplus B_2\oplus B_3\oplus B_4
\end{equation}
acting on smooth sections of $E$. Then
\begin{equation}\label{E:tau-index of B}
\ind_\tau B=\ind_\tau B_1+\ind_\tau B_2-\ind_\tau B_3-\ind_\tau B_4.
\end{equation}
Theorem~\ref{T:relative index theorem in von neumann setting} is then equivalent to the equality $\ind_\tau B=0$.

For $j=1,2$, let $\alpha_j$, $\beta_j\in C^\infty(M_j)$ be the functions defined in Section ~\ref{SS:A relative index theorem}.
We use these functions to construct an {\em odd} self-adjoint $A$-equivariant bundle map $U\in C^\infty(M,\End_A(E))$ with the following properties:
\begin{enumerate}[label=\textbf{(U.\arabic*)},ref=U.\arabic*]
\item \label{U:property1}$U^2=\id_E$;
\item \label{U:property2} the anticommutator $\{B,U\}:=BU+UB$ is an even compactly supported endomorphism of the bundle $E$.
\end{enumerate}
By Condition~(\ref{U:property1}), $U^{-1}=U$. Since the degree of $U$ with respect to the grading on $E$ is odd,  i.e. $U:E^\pm\to E^\mp$, we have
\begin{equation}\label{E:indUBU=-indU}
\ind_\tau UBU=-\ind_\tau B.
\end{equation}
From Condition~(\ref{U:property2}) we deduce that $B+UBU$ is a compactly supported odd self-adjoint $A$-linear endomorphism of $E$.
Hence, by \cite[Theorem~2.21]{BrCe15} we get
\begin{equation}\label{E:indUBU=indU}
\ind_\tau UBU=\ind_\tau B.
\end{equation}
By Equations~(\ref{E:indUBU=-indU}) and (\ref{E:indUBU=indU}) we finally deduce that $\ind_\tau B=-\ind_\tau B$, from which the thesis follows.


\subsection{Construction of the operator $B$}\label{SS:construction of the operator B}
Let us first introduce some notation.
Given a $\ZZ_2$-graded $A$-module $Z=Z^+\oplus Z^-$, we denote by $Z^\textrm{op}$ the $A$-module $Z$ endowed with the opposite grading, i.e. $\left(Z^\textrm{op}\right)^\pm=Z^\mp$.
The bundles $E_1\rightarrow M_1$, $E_2\rightarrow M_2$, $E_3^\textrm{op}\rightarrow M_3$ and $E_4^\textrm{op}\rightarrow M_4$ define a $\ZZ_2$-graded $A$-Hilbert bundle $E\rightarrow M$ through Formula~(\ref{E:bundle E_1 oplus E_2 oplus E_3^op oplus E_4^op}).
Observe that
\[
C^\infty(M,E^\pm)=C^\infty(M_1,E_1^\pm)\oplus C^\infty(M_2,E_2^\pm)\oplus C^\infty(M_3,E_3^\mp)\oplus C^\infty(M_4,E_4^\mp).
\]
With respect to this decomposition, define the odd formally self-adjoint elliptic differential operator 
\[
\mathcal{D}=\left(\begin{array}{cc}0&D^-\\D^+&0\end{array}\right),
\]
where
\[
D^\pm=\left(\begin{array}{cccc}D_1^\pm&0&0&0\\0&D_2^\pm&0&0\\0&0&D_3^\mp&0\\0&0&0&D_4^\mp\end{array}\right)\colon C^\infty(M,E^\pm)\longrightarrow C^\infty(M,E^\mp).
\]
Here, $D_j^{\pm}=\mathcal{D}_j|_{C^\infty(M,E^\pm)}$. Notice that the operator $\mathcal{D}$ satisfies 
Assumption~\ref{A:uniformly bounded principal symbol}.
Define the self-adjoint endomorphism of $E$
\begin{equation}\label{E:relative index admissible endomorphism}
\mathcal{F}:=\left(\begin{array}{cc}0&F^-\\F^+&0\end{array}\right),
\end{equation}
where
\[
F^\pm=\left(\begin{array}{cccc}F_1^\pm&0&0&0\\0&F_2^\pm&0&0\\0&0&F_3^\mp&0\\0&0&0&F_4^\mp\end{array}\right)\colon E^\pm\longrightarrow E^\mp.
\]
Observe that $\mathcal{F}$ is admissible so that the operator 
\[
B:=\mathcal{D}+\mathcal{F}
\] 
is of Callias-type.
Denote by $H_j$ the $\ZZ_2$-graded $A$-Hilbert spaces $L^2(M_j,E_j)$ and set
\begin{equation}\label{E:A-Hilbert space H}
H:=H_1\oplus H_2\oplus H_3^\textrm{op}\oplus H_4^\textrm{op}.
\end{equation}
Notice that $H$ coincides with the space of $L^2$-sections of the $A$-Hilbert bundle $E$.
As in Section~\ref{SS:Callias-type operators} we use \cite{BrCe15} to conclude that the Callias-type operator $B$ as a closed odd self-adjoint $\tau$-Fredholm operator on $H$.
Similarly,  the Callias-type operators $B_j$ ($j=1,\ldots,4$) is a closed odd self-adjoint $\tau$-Fredholm operator on $H_j$.
From Decomposition~(\ref{E:A-Hilbert space H}) it follows that the operator $B$ has the form (\ref{E:decomposition of B})
so that it satisfies (\ref{E:tau-index of B}).
The construction of $B$ is complete.


\subsection{Construction of the bundle map $U$}

Recall that the functions $\alpha_j$, $\beta_j\in C^\infty(M_j)$ were defined in Section~\ref{SS:A relative index theorem}.
The function $\alpha_1$ is smooth and supported in $W_1\cup U(N_1)$.
Since $E_1|_{W_1\cup U(N_1)}\cong E_3|_{W_1\cup U(N_1)}$, multiplication by $\alpha_1$ defines a bundle map $a:E_1\rightarrow E_3$.
In the same fashion, we construct bundle maps $b:E_1\rightarrow E_4$, $c:E_2\rightarrow E_3$ and $d:E_2\rightarrow E_4$ by using respectively the functions $\beta_1$, $\beta_2$ and $\alpha_2$.
Observe that we can also regard $\alpha_1$ as a smooth function on $M_3$ with support in $W_1\cup U(N_1)$.
By using again the isomorphism $E_3|_{W_1\cup U(N_1)}\cong E_1|_{W_1\cup U(N_1)}$, multiplication by $\alpha_1$ gives a bundle map $E_3\rightarrow E_1$, that coincides with the map $a^\ast$ adjoint to $a$.
In the same way we see that the maps $b^\ast$, $c^\ast$ and $d^\ast$ are multiplication respectively by the functions $\beta_1$, $\beta_2$ and $\alpha_2$.
Condition (\ref{C:cut-off3}) of Section~\ref{SS:A relative index theorem} implies
\begin{equation}\label{E:computation with alphas and betas}
	aa^\ast+bb^\ast=1,\qquad cc^\ast+dd^\ast=1.
\end{equation}
We now assume that the functions $\alpha_j$  and $\beta_j$ are chosen in such a way that
\begin{equation}\label{E:identification of alpha_1 and alpha_2}
	\alpha_1|_{U(N_1)}=\alpha_2|_{U(N_2)},\qquad \beta_1|_{U(N_1)}=\beta_2|_{U(N_2)},
\end{equation}
where the neighborhoods $U(N_1)$ and $U(N_2)$ are identified through the isomorphism $\psi$ of Assumption~\ref{A:cut and paste}.
Define the odd $A$-linear self-adjoint operator
\[
	V\ :=\
	  \begin{pmatrix}
	     0&0&-a^\ast&-b^\ast\\
		0&0&-c^\ast&d^\ast\\
		a&c&0&0\\
		b&-d&0&0
	  \end{pmatrix}
	 \colon\, C^\infty(M,E^\pm)\ \longrightarrow\  C^\infty(M,E^\mp).
\]
We remark that the operator $V$ is odd because we use the opposite grading on $E_3$ and $E_4$.
Let $\epsilon$ be the grading operator on $E$, i.e. $\epsilon|_{E^\pm}=\pm 1$. 
Finally set
\[
	U\ :=\ \epsilon V.
\]
In the next two lemmas we show that the map $U$ has the required properties.

First, notice that since $V$ is odd with respect to the grading of $E$, we have
\begin{equation}
	\{\epsilon,U\}\ =\ \epsilon V\ +\ V\epsilon\ =\ 0, 
\end{equation}
i.e., $U$ is also an odd operator.

\begin{lemma}\label{L:U satisfies property 1}
$U$ satisfies Condition~(\ref{U:property1}).
\end{lemma}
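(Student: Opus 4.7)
The plan is to reduce the claim $U^2 = \id_E$ to the identity $V^2 = -\id_E$, and then verify that by a direct block computation.

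First, since $V$ is odd it anticommutes with the grading operator $\epsilon$ (this is the displayed identity $\{\epsilon, V\} = 0$ just above the lemma statement), and $\epsilon^2 = \id_E$. Consequently
\[
U^2 \;=\; \epsilon V \epsilon V \;=\; -\epsilon^2 V^2 \;=\; -V^2,
\]
so it suffices to prove $V^2 = -\id_E$.

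Next I would compute the sixteen block entries of $V^2$ as operators between the bundles $E_1, \ldots, E_4$. The four diagonal blocks reduce, via Condition (\ref{C:cut-off3}), to the tautological identities $\alpha_j^2 + \beta_j^2 = 1$: for example, the $(1,1)$-block is $-a^\ast a - b^\ast b = -(\alpha_1^2 + \beta_1^2) = -1$. The $(3,3)$-block, acting on the bundle $E_3 \to M_3 = W_1 \cup_N V_2$, is $-(\alpha_1^2 + \beta_2^2)$; this is identically $-1$ because off $U(N)$ exactly one of the two cut-offs is $\equiv 1$ (by Condition (\ref{C:cut-off2}) together with the disjointness $W_1 \cap V_2 = \emptyset$ in $M_3$), while on $U(N)$ one applies \eqref{E:identification of alpha_1 and alpha_2} to rewrite it as $\alpha_1^2 + \beta_1^2 = 1$. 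The $(2,2)$- and $(4,4)$-blocks are analogous.

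The key step is the vanishing of each off-diagonal block, e.g.\ the $(1,2)$-block $-a^\ast c + b^\ast d$. I would first observe that each summand is supported inside the tubular neighbourhood $U(N)$: the composition $a^\ast c$ is nonzero only where the supports of both cut-offs overlap, and in $M_3$ one has $(V_2 \cup U(N)) \cap (W_1 \cup U(N)) = U(N)$ since $W_1 \cap V_2 = \emptyset$; the argument for $b^\ast d$ is symmetric. On $U(N)$, the identities \eqref{E:identification of alpha_1 and alpha_2} give $a^\ast c = \alpha_1 \beta_2 = \alpha_1 \beta_1$ and $b^\ast d = \beta_1 \alpha_2 = \alpha_1 \beta_1$, so the two terms cancel. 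The remaining three off-diagonal blocks vanish by the same mechanism. The main (essentially only) obstacle is the bookkeeping: tracking, for each of the eight composite bundle maps appearing in $V^2$, which isomorphism from Assumption~\ref{A:cut and paste} is being invoked and in which region.
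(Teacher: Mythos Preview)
Your proposal is correct and follows essentially the same approach as the paper: both reduce to showing $V^2=-\id_E$ via $U^2=\epsilon V\epsilon V=-V^2$, and both verify this by a direct block computation using Conditions~(\ref{C:cut-off1})--(\ref{C:cut-off3}) and the identification~\eqref{E:identification of alpha_1 and alpha_2}. The paper simply writes out the resulting $4\times4$ matrix for $V\circ V$ and asserts it equals $-\id_E$, whereas you spell out the support bookkeeping more explicitly; note also that because $V$ is block anti-diagonal, only eight of the sixteen entries of $V^2$ are potentially nonzero, so your ``sixteen block entries'' reduces immediately to the four diagonal and four off-diagonal ones you actually discuss.
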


\begin{proof}
Equations (\ref{E:computation with alphas and betas}) and (\ref{E:identification of alpha_1 and alpha_2}) together with Condition (\ref{C:cut-off1}) of Section~\ref{SS:A relative index theorem} imply
\[
	V\circ V\ =\ 
	\begin{pmatrix}
		-\alpha_1^2-\beta_1^2&-\alpha_1\beta_2+\beta_1\alpha_2&0&0\\
		-\beta_2\alpha_1+\alpha_2\beta_1&-\beta_2^2-\alpha_2^2&0&0\\
		0&0&-\alpha_1^2-\beta_2^2&-\alpha_1\beta_1+ \beta_2\alpha_2\\
		0&0&-\beta_1\alpha_1 +\alpha_2\beta_2&-\beta_1^2-\alpha_2^2
	\end{pmatrix}\ =\ -\id_E.
\]
Hence, $U^2= \epsilon{}V\epsilon{}V= -V^2=\id_E$.
\end{proof}


\begin{lemma}\label{L:U satisfies property 2}
$U$ satisfies condition (\ref{U:property2}).
\end{lemma}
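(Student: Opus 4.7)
The plan is to use the grading operator $\epsilon$ to convert the anticommutator $\{B, U\} = \{B, \epsilon V\}$ into the commutator $-\epsilon[B, V]$, and then verify that $[B, V]$ is a compactly supported bundle endomorphism. Since $B$ is odd, we have $B\epsilon = -\epsilon B$, so
\[
\{B, U\} \ =\ B\epsilon V + \epsilon VB \ =\ -\epsilon(BV - VB)\ =\ -\epsilon[B, V].
\]
As $B$ and $V$ are both odd, $[B, V]$ is an even operator, so $\{B,U\}$ is automatically even.

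The heart of the argument is to show that $[B, V] = [\mathcal{D}, V] + [\mathcal{F}, V]$ vanishes outside the union of the tubular neighborhoods (taken in each of $M_1, M_2, M_3, M_4$). The map $V$ is built precisely so as to intertwine $B$ with its cut-and-paste analogs away from the cutting hypersurfaces. Concretely, on $W_1 \setminus U(N_1) \subset M_1$ (identified with its copy in $M_3$), we have $\alpha_1 \equiv 1$, $\beta_1 \equiv 0$, so the only active component of $V$ emanating from $E_1$ is the bundle isomorphism $E_1 \to E_3$ supplied by Assumption~\ref{A:cut and paste}(ii). Under this identification, Condition~(iii) of the same assumption yields $\mathcal{D}_3 = \mathcal{D}_1$, while the formula $\mathcal{F}_3 = \mathcal{F}_1\alpha_1 + \mathcal{F}_2\beta_2$ gives $\mathcal{F}_3 = \mathcal{F}_1$ there (since $\beta_2 \equiv 0$ on $W_1 \setminus U(N_1)$). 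Both $[\mathcal{D}, V]$ and $[\mathcal{F}, V]$ consequently vanish on $W_1 \setminus U(N_1)$, and the symmetric analysis on $V_1 \setminus U(N_1)$, $W_2 \setminus U(N_2)$, $V_2 \setminus U(N_2)$ (and their identified counterparts in $M_3, M_4$) establishes vanishing everywhere outside the chosen tubular neighborhoods. This intertwining property also implies that $\sigma(\mathcal{D})$ commutes with $V$ at every point, so the principal symbol of $[\mathcal{D}, V]$ is zero and $[\mathcal{D}, V]$ is of zeroth order; $[\mathcal{F}, V]$ is obviously an endomorphism.

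Since each $N_j$ is compact, the tubular neighborhoods $U(N_j)$ can be chosen to have compact closure, so $[B, V]$ and therefore $\{B, U\} = -\epsilon[B, V]$ is a compactly supported even bundle endomorphism, establishing Condition~(U.2). The main obstacle is the block bookkeeping across the four bundles with their opposite-grading conventions on $E_3^{\textrm{op}}, E_4^{\textrm{op}}$ and the tracking of the various identifications; conceptually, however, the argument reduces to the statement that $V$ is the identity map (in the identified coordinates) outside a compact region where the cut-and-paste surgery happens, and both $\mathcal{D}$ and $\mathcal{F}$ are intertwined by this identity there by construction.
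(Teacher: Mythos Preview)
Your proof is correct and follows essentially the same approach as the paper's. The paper computes $\{U,B\}$ directly on a single component (e.g., on $s^+\in C^\infty(M_1,E_1^+)$ it obtains the commutator $[\alpha_1,D_1^+]$, which is order~$0$ and supported in $U(N_1)$) and then remarks that the other components are similar. Your identity $\{B,U\}=-\epsilon[B,V]$ is a clean way to handle the parity bookkeeping in one stroke, and your observation that the bundle isomorphisms intertwine the principal symbols (so $[\mathcal{D},V]$ is order~$0$ globally, not merely outside the tubes) is exactly what the paper's commutator $[\alpha_1,D_1^+]$ encodes componentwise; the underlying mechanism---that $V$ intertwines the operators away from the cutting region and only produces symbol-commutator terms there---is the same.
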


\begin{proof}
Since both $B$ and $U$ are odd differential operators, the commutator $\{B,U\}$ is a differential operator of even degree.
It remains to show that $\{B,U\}$ is of order zero and compactly supported.
For $s^+\in C^\infty(M_1,E_1^+)$, we have
$$\begin{array}{rcl}
\left(UB+BU\right)\left(s^+\right)&=&aB_1^+s^+-B_3^+\left(as^+\right)\vspace{0.2cm}\\
&=&\alpha_1\left(D_1^++F_1^+\right)(s^+)-\left(D_1^++F_1^+\right)\left(\alpha_1s^+\right)\vspace{0.2cm}\\
&=&\alpha_1D_1^+s^+-D_1^+\left(\alpha_1s^+\right)\vspace{0.2cm}\\
&=&\left[\alpha_1,D_1^+\right]\left(s^+\right).\\
\end{array}$$
Conditions~(\ref{C:cut-off1}) and (\ref{C:cut-off2}) of Section~\ref{SS:A relative index theorem} imply that $\left[\alpha_1,D_1^+\right]$ is a bundle map supported in $U(N_1)$.
Since the closure of $U(N_1)$ is a compact set, the previous calculation shows that $\{U,B\}|_{C^\infty(M_1,E_1^+)}$ is a compactly supported homomorphism $E_1^+\rightarrow\left( E_3^\textrm{op}\right)^+$.
A similar argument shows that the homomorphism $\{U,B\}|_{C^\infty(M_j,E_j^\pm)}$ is compactly supported for all $j$.
\end{proof}


\subsection{Proof of Theorem~\ref{T:relative index theorem in von neumann setting}}
We need to show that Conditions (\ref{U:property1}) and (\ref{U:property2}) imply Equations~(\ref{E:indUBU=-indU}) and (\ref{E:indUBU=indU}).

We start with proving that Equation~(\ref{E:indUBU=-indU}) follows from Condition~(\ref{U:property1}).
Since the functions $\alpha_j$, $\beta_j$ are uniformly bounded, $U$ defines an $A$-equivariant bounded operator on the $A$-Hilbert space $L^2(M,E)$, that we denote by the same symbol $U$.
By Condition~(\ref{U:property1}), $U$ is unitary.
Since $B$ is $\tau$-Fredholm and $U$ is an $A$-equivariant isomorphism of $A$-Hilbert spaces, $UBU$ is $\tau$-Fredholm and the $\tau$-index of $ UBU$ is well-defined.
As a bundle map, $U$ is odd with respect to the grading of $E$, i.e. it is of the form
\[
U=\left(\begin{array}{cc}0&U^-\\U^+&0
\end{array}\right),
\]
where $U^\pm\colon C^\infty(M,E^\pm)\longrightarrow C^\infty(M,E^\mp)$.
It follows that $\left(UBU\right)^\pm=U^\pm B^\mp U^\pm$.
Since $U$ is an $A$-equivariant isomorphism, we deduce
\begin{equation}\label{E:kernel of UBU}
\Ker \left(UBU\right)^\pm\cong \Ker B^\mp,
\end{equation}
where the isomorphism is taken in the category of $A$-Hilbert spaces. 
Since $U$ is unitary, we use Borel functional calculus and from Equation~(\ref{E:kernel of UBU}) we deduce
\begin{equation}\label{E:indexl of UBU}
\dim_\tau \left(UBU\right)^\pm= \dim_\tau  B^\mp,
\end{equation}
from which Equation~(\ref{E:indUBU=-indU}) follows.

It remains to show that Equation~(\ref{E:indUBU=indU}) holds.
By Conditions~(\ref{U:property1}) and (\ref{U:property2}), the operator
\[
B+UBU=BU^2+UBU=\{B,U\}U
\]
is a compactly supported odd self-adjoint $A$-linear endomorphism of the bundle $E$.
It follows from \cite[Theorem~2.21]{BrCe15} that
\[
\ind_\tau B\ =\  \ind_\tau \left(B-(B+UBU)\right)\ =\ \ind_\tau \left(-UBU\right).
\]
Finally, Equation~(\ref{E:indUBU=indU}) follows by noticing that $\ind_\tau UBU=\ind_\tau \left(-UBU\right)$.
\hfill $\square$


\section{Analysis on the cylinder}\label{S:cylinder}

The next three sections are devoted to the proof of Theorem~\ref{T:computation of odd-dimensional case}.
In this section we define a particular Callias-type operator acting on a cylinder $N\times\RR$ with compact base $N$.
We call this operator the \emph{model operator} and show that Theorem~\ref{T:computation of odd-dimensional case} holds in this case.
In Section~\ref{S:cylindrical end} we consider the case of a manifold with cylindrical ends. We use the relative index theorem to reduce this case to the case of a  cylinder $N\times\RR$. Finally, in Section~\ref{S:general case} we again use the relative index theorem to reduce the general case to the case of a manifold with cylindrical ends. 


\subsection{The model operator}\label{SS:The model operator}

Let $E_N=E^+_N\oplus E^-_N$ be a $\ZZ_2$-graded Dirac $A$-Hilbert bundle over a {\em compact} oriented manifold $N$. Let $\nabla^E=\nabla^{E^+_N}\oplus\nabla^{E^-_N}$ be the connection on $E_N$ and let $D_N$
\[
	D_N\ =\ \left(\begin{array}{cc}0&D_N^-\\D^+_N&0\end{array}\right),
\]
be a Dirac operator associated with $\nabla^{E_N}$ and the zero potential, cf. Section~\ref{SS:Callias-type theorem}. 

It is a classical fact (see for instance \cite{FomenkoMoscenko80}) that the operator $D_N$ is $\tau$-Fredholm and its $\tau$-index is defined through the formula:
\[
\ind_\tau D_N\ =\ \dim_\tau\ker D^+_N\ -\ \dim_\tau\ker D^-_N.
\]
 
Let $p:N\times\RR\rightarrow N$ be the projection onto the first factor and denote by $\widehat{E}_N$ the pull-back bundle $p^\ast E_N$. Then 
\begin{equation}\label{E:widehatEN}
	\widehat{E}_N\ = \ \widehat{E}_N^+\oplus \widehat{E}_N^-,
	\qquad\text{where}\quad \widehat{E}_N^\pm:= p^*E_N^\pm.
\end{equation}
The bundle $\widehat{E}_N$ has a natural Clifford action given by:
\begin{equation}\label{E:clifford action}
	\widehat{c}(\xi,t)
	\ = \
	c(\xi)\ + \ \gamma t,\qquad  
	(\xi,t)\in T^\ast_{(x,r)} (N\oplus\RR)=
	    T^\ast_x N\oplus\RR,\qquad (x,r)\in N\times \RR,
\end{equation}
where $c$ is the Clifford action of $T^*N$ on $E_N$ and $\gamma\big|_{\widehat{E}_N^\pm}=\pm i$. Notice, however, that this action does not preserve the grading \eqref{E:widehatEN}.

Endowed with the pull-back connection $\nabla^{\widehat{E}_N}$ induced by the connection on $E_N$, the bundle $\widehat{E}_N$ becomes an ungraded Dirac $A$-Hilbert bundle. Let $\widehat{D}_N$ denote the Dirac operator associated with the connection $\nabla^{\widehat{E}_N}$ and the zero potential. With respect to the decomposition 
\begin{equation}\label{E:decomposition of sections over the cylinder}
L^2(N\times\RR,\widehat{E})=L^2(E)\tensor L^2(\RR).
\end{equation}
$\widehat{D}_N$ has the form
\begin{equation}\label{E:hatD}
	\widehat{D}_N\ :=\ D_N\otimes1\ +\ \gamma\otimes\partial_r,
\end{equation}
where $\partial_r$ denotes the operator of derivation in the axial direction of the cylinder $N\times \RR$.

Let $h:\RR\rightarrow \mathbb{R}$ be a smooth function such that
\begin{equation}\label{E:function h}
	h(r)=\left\{
	  \begin{array}{rr}-1,\qquad &r<R_1\\1\,,\qquad &r>R_2
	  \end{array}
	 \right.
\end{equation}
for some constants $R_1<R_2$.  By a slight abuse of notation we will denote by 
$h$ also the induced function $h:N\times\RR\to[-1,1]$. 
Notice that the multiplication by  $h$ is an admissible endomorphism of the ungraded Dirac $A$-Hilbert bundle $\widehat{E}_N$ (see Definition~\ref{D:ungraded admissible endomorphism}).


\begin{definition}\label{D:model operator}
The \emph{model operator} ${\bf M}$ associated with the pair $(N,E_N)$ is the Callias-type operator associated with the ungraded triple $(\widehat{E}_N,\widehat{D}_N,h)$. 
\end{definition}
Thus 
\begin{equation}\label{E:twisted Callias type operator}
	{\bf M}\ :=\ \left(\begin{array}{cc}0&{\bf M}_-\\{\bf M}_+&0\end{array}\right),
\end{equation}\label{E:model}
where ${\bf M}_\pm:=\widehat{D}_N\pm ih$.

\subsection{The index of the model operator}\label{SS:index of model}
As already seen in Section~\ref{SS:Callias-type operators}, ${\bf M}$ is $\tau$-Fredholm and its $\tau$-index is given by
\[
	\ind_\tau {\bf M}\ =\ \dim_\tau\ker{\bf M}^+-\dim_\tau\ker{\bf M}^-.
\]
Our proof of Theorem~\ref{T:computation of odd-dimensional case} is based on the following

\begin{proposition}\label{P:separation of variables in the von Neumann setting}
$\ind_\tau D_N\ =\ \ind_\tau {\bf M}$.
\end{proposition}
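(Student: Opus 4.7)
The plan is to prove this by separation of variables in the cylindrical direction, reducing the computation of $\ker \mathbf{M}^\pm$ to an ODE on $\mathbb{R}$ coupled with a tangential Dirac problem on $N$.

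First I would establish a Bochner-type identity for $(\mathbf{M}^+)^*\mathbf{M}^+$. Using that $D_N$ (viewed as $D_N\otimes 1$ on the cylinder) anticommutes with $\gamma$, since $\gamma$ is Clifford multiplication by the unit axial covector which is orthogonal to $T^*N$, and that $[\widehat{D}_N,h]=\gamma h'(r)$, I obtain
\[
    (\mathbf{M}^+)^*\mathbf{M}^+ \;=\; D_N^2 \;-\; \partial_r^2 \;+\; h^2 \;+\; i\gamma h'.
\]
Setting $A:=\partial_r+h$, so $A^*=-\partial_r+h$, the elementary identities $A^*A=-\partial_r^2+h^2-h'$ and $AA^*=-\partial_r^2+h^2+h'$, combined with $\gamma|_{\widehat{E}_N^\pm}=\pm i$, yield the block decomposition
\[
    (\mathbf{M}^+)^*\mathbf{M}^+ \;=\; \begin{pmatrix} (D_N^+)^*D_N^+ + A^*A & 0 \\ 0 & (D_N^-)^*D_N^- + AA^* \end{pmatrix}
\]
with respect to the grading $\widehat{E}_N=\widehat{E}_N^+\oplus\widehat{E}_N^-$.

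Next I would extract $\ker\mathbf{M}^+$ by nonnegativity. A section $\psi=(\psi^+,\psi^-)\in\ker\mathbf{M}^+$ must satisfy $D_N^+\psi^+=0$, $A\psi^+=0$, $D_N^-\psi^-=0$ and $A^*\psi^-=0$, since each diagonal block is a sum of nonnegative operators. The ODE $A\psi^+=0$ gives $\psi^+(x,r)=\phi^+(x)e^{-H(r)}$, where $H'=h$; because $h\to\pm 1$ at $\pm\infty$, one has $H(r)\ge c|r|-C$, so $e^{-H}\in L^2(\mathbb{R})$. In contrast $A^*\psi^-=0$ gives $\psi^-\propto e^H$, which is not square integrable, forcing $\psi^-=0$. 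Hence
\[
    \ker\mathbf{M}^+ \;=\; \ker D_N^+ \,\otimes\, \mathbb{C}\langle e^{-H}\rangle.
\]
Running the identical argument for $\mathbf{M}^-=\widehat{D}_N-ih$ reverses the sign of $h'$ and so swaps the roles of $A^*A$ and $AA^*$ in the diagonal blocks; the $L^2$-solution of the cylindrical ODE now sits in the $\psi^-$-slot, yielding $\ker\mathbf{M}^-=\ker D_N^-\otimes\mathbb{C}\langle e^{-H}\rangle$. Since tensoring an $A$-Hilbert module with a one-dimensional complex line preserves $\tau$-dimension, subtracting gives $\ind_\tau\mathbf{M}=\dim_\tau\ker D_N^+-\dim_\tau\ker D_N^-=\ind_\tau D_N$.

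The main obstacle will be justifying the nonnegativity-implies-vanishing argument at the level of unbounded operators on the noncompact cylinder. Concretely, for $\psi\in\ker\mathbf{M}^+$ I must push the formal Bochner identity to the equation $\|D_N^+\psi^+\|^2+\|A\psi^+\|^2=0$, together with its counterpart on $\psi^-$, and this requires enough integrability and regularity on $\psi$. Elliptic regularity of $\mathbf{M}^+$ together with the essential self-adjointness supplied by the Callias framework of \cite{BrCe15} should provide this. The identification of the simultaneous kernel $\ker(D_N^+\otimes 1)\cap\ker(1\otimes A)$ with the algebraic tensor product $\ker D_N^+\otimes\mathbb{C}\langle e^{-H}\rangle$ is then a standard consequence of spectral calculus for the two commuting self-adjoint operators, and the $\tau$-dimension claim follows from multiplicativity of $\dim_\tau$ under tensoring with a finite-dimensional complex space.
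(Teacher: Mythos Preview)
Your proposal is correct and follows essentially the same approach as the paper. The paper computes $\mathbf{M}_-\mathbf{M}_+$ and $\mathbf{M}_+\mathbf{M}_-$, observes that on $\widehat{E}_N^\pm$ they decompose as $D_N^2\otimes 1 + 1\otimes Q^\pm$ with $Q^\pm=(i\partial_r\mp ih)(i\partial_r\pm ih)$ (your $A^*A$ and $AA^*$), uses nonnegativity of both summands to reduce to the same ODE analysis, and concludes via the identical $L^2$/non-$L^2$ dichotomy for the solutions of $y'\pm hy=0$; your notation with $A=\partial_r+h$ and the explicit primitive $H$ is cosmetic.
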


\begin{remark}
The definition of the operator ${\bf M}$ depends on the choice of a function $h$ satisfying \eqref{E:function h}. 
Proposition~\ref{P:separation of variables in the von Neumann setting} shows that the $\tau$-index of ${\bf M}$ is independent of this choice.
This fact justifies the notation used in Definition~\ref{D:model operator}.
\end{remark}

The rest of this section is occupied with the proof of Proposition~\ref{P:separation of variables in the von Neumann setting}.


\subsection{Differences from the case $A=\CC$.}\label{SS:diffrence from C}
When $A=\CC$, Proposition~\ref{P:separation of variables in the von Neumann setting} was proven by Anghel in \cite{Anghel1993} by using a separation of variables argument. Anghel proved that for every $\lambda>0$ there exists $\mu>0$ such that
\begin{equation}\label{E:Anghel's proof2}
	\im E_{(-\mu,\mu)}({\bf M})\ \subseteq\ \im E_{(-\lambda,\lambda)}(D_N)\tensor L^2(\RR),
\end{equation}
where $E_{(-\mu,\mu)}({\bf M})$ is the spectral projection of ${\bf M}$ relative to the interval $(-\mu,\mu)$ and $E_{(-\lambda,\lambda)}(D_N)$ is the spectral projection of $D_N$ relative to the interval $(-\lambda,\lambda)$.
Since the operator $D_N$ is Fredholm, $0$ is not in the essential spectrum of $D_N$ and we can choose $\lambda_0$ such that 
\[
	\im E_{(-\lambda_0,\lambda_0)}(D_N)\ =\ \Ker D_N.
\] 
It then follows from  \eqref{E:Anghel's proof2} that
\begin{equation}\label{E:Anghel's proof3}
	\Ker{\bf M}\ \subseteq\ \Ker D_N\tensor L^2(\RR).
\end{equation}

In the case when $A$ is an arbitrary von Neumann algebra with a finite trace $\tau$, (\ref{E:decomposition of sections over the cylinder}) and~(\ref{E:Anghel's proof2}) still hold.
However, in general the $\tau$-Fredholmness of $D_N$ doesn't imply that $0$ is not in the essential spectrum of this operator and we might not be able to deduce (\ref{E:Anghel's proof3}) from (\ref{E:Anghel's proof2}). Instead we study the square of the model operator and use the method of \cite[\S3]{Br-cob} (see also \cite[\S11.2]{Br-index}) to compute $\ker\mathbf{M}= \ker\mathbf{M}^2$.

\newcommand{\M}{\mathbf{M}}
\newcommand{\D}{\widehat{D}}\newcommand{\DN}{\widehat{D}_N}
\newcommand{\E}{\widehat{E}}\newcommand{\EN}{\widehat{E}_N}
\newcommand{\hSigma}{\widehat{\Sigma}}\newcommand{\hSigmaN}{\widehat{\Sigma}_N}
\subsection{The square of the model operator}\label{SS:square M}
Our proof of Proposition~\ref{P:separation of variables in the von Neumann setting} is based on the study of the operator
\[	
	\M^2\ = \
	\begin{pmatrix}
	 \M_-\M_+&0\\0&\M_+\M_-
	\end{pmatrix}.
\]
First we compute $\M_-\M_+$. Since $\gamma\otimes\partial_r$ anticommutes with $D_N$, multiplication by $ih$ commutes with $D_N$, $\gamma^2=-1$, and $\partial_rh-h\partial_r= h'$,  we obtain 
\begin{equation}\label{E:M-M+}
		\M_-\M_+ \ = \ 	D_N^2\otimes 1 \ - \ 1\otimes\partial_r^2 
	\ +\ i\gamma\otimes h' \ + \ 1\otimes h^2.
\end{equation}

Even though the model operator $\M$ is neither even, nor odd with respect to the grading \eqref{E:widehatEN}, it follows from the equality \eqref{E:M-M+} that $\M_-\M_+$ does preserve this grading. Moreover, 
\begin{equation}\label{E:M-M+Epm}
	\M_-\M_+\big|_{C^\infty(N\times\RR,\EN^\pm)}
	\ = \ 	
	D_N^2\otimes 1 \ + \ 1\otimes Q^\pm,
\end{equation}
where
\begin{equation}\label{E:Qpm}
	Q^\pm\ = \ -\partial_r^2 \ \mp\ h' \ + \  h^2 \ = \
	(i\partial_r\mp ih)\,(i\partial_r\pm ih) \ \ge \ 0.
\end{equation}

Similarly, 
\begin{equation}\label{E:M+M-Epm}
	\M_+\M_-\big|_{C^\infty(N\times\RR,\EN^\pm)}
	\ = \ 	
	D_N^2\otimes 1 \ + \ 1\otimes Q^\mp.
\end{equation}

\subsection{The kernel of $\M_\pm$}\label{SS:kernelM}
Since the operators $\M_+$ and $\M_-$ are adjoint of each other, we conclude that 
\begin{equation}\label{E:kerMpm}
		\ker\M_+\ = \ \ker\M_-\M_+, \qquad \ker\M_-\ = \ \ker\M_+\M_-.
\end{equation}
In particular, it follows from the discussion in Section~\ref{SS:square M} that 
\[
	\ker\M_\pm\ = \ \ker\M_\pm\big|_{C^\infty(N\times\RR,\EN^+)}
	\oplus \ker\M_\pm\big|_{C^\infty(N\times\RR,\EN^-)}.
\]

Since the operators $D_N^2$ and $Q^\pm$ are non-negative, we have
\begin{equation}\label{E:kerM2}
 \begin{aligned}
	\ker\M_-\M_+\big|_{C^\infty(N\times\RR,\EN^\pm)}\ &= \ 
	\ker D_N^2\big|_{C^\infty(N\times\RR,E_N^\pm)}
	\otimes \ker Q^\pm\ = \ \ker D_N^\pm\otimes \ker Q^\pm;\\
	\ker\M_+\M_-\big|_{C^\infty(N\times\RR,\EN^\pm)}\ &= \ 
	\ker D_N^2\big|_{C^\infty(N\times\RR,E_N^\pm)}
	\otimes \ker Q^\mp
	\ = \ \ker D_N^\pm\otimes \ker Q^\mp.
 \end{aligned}
\end{equation}
Thus to compute $\ker\M_\pm$ it remains to compute $\ker Q^\pm$. 

\begin{lemma}\label{L:kerQ}
$\dim\ker Q^+=1$ and $\dim\ker Q^-=0$.
\end{lemma}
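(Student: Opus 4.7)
The plan is to exploit the factorization displayed in \eqref{E:Qpm}, namely
\[
 Q^\pm \ = \ (i\partial_r \mp ih)(i\partial_r \pm ih) \ = \ B_\pm^{\ast} B_\pm, \qquad B_\pm := i\partial_r \pm ih,
\]
where the adjoint is taken in $L^2(\RR)$ (note that $i\partial_r$ is formally self-adjoint and $h$ is real). Since $Q^\pm \ge 0$, an $L^2$ function $u$ lies in $\ker Q^\pm$ if and only if $B_\pm u = 0$, so the problem reduces to counting $L^2$-solutions of a first-order linear ODE.

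The equation $B_\pm u = 0$ reads $u' = \mp h u$, whose one-dimensional space of classical solutions is spanned by
\[
 u_\pm(r) \ := \ \exp\!\Big( \mp \int_0^{r} h(s)\,ds \Big).
\]
Thus the only remaining task is to decide whether $u_\pm$ lies in $L^2(\RR)$, which is determined by the asymptotics of the antiderivative of $h$. Using that $h(r) = -1$ for $r < R_1$ and $h(r) = +1$ for $r > R_2$, one checks that
\[
 \int_0^{r} h(s)\,ds \ \longrightarrow \ +\infty \quad \text{as} \ r \to +\infty,
 \qquad
 \int_0^{r} h(s)\,ds \ \longrightarrow \ +\infty \quad \text{as} \ r \to -\infty,
\]
up to additive constants coming from the transition region $[R_1,R_2]$. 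Consequently $u_+(r) = \exp(-\int_0^{r} h)$ decays exponentially at both ends and hence belongs to $L^2(\RR)$, while $u_-(r) = \exp(\int_0^{r} h)$ grows exponentially at both ends and is not $L^2$. This gives $\dim\ker Q^+ = 1$ and $\dim\ker Q^- = 0$.

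The argument is essentially a harmonic-oscillator computation and there is no serious obstacle; one mild point worth stating explicitly is that any distributional (or $L^2$-weak) solution of $Q^\pm u = 0$ is automatically smooth by elliptic regularity for the Schrödinger operator $-\partial_r^2 + V$, so working with classical ODE solutions loses nothing. With Lemma~\ref{L:kerQ} in hand, the formulas \eqref{E:kerM2} immediately yield $\ker \mathbf{M}_+ \cong \ker D_N^+$ and $\ker \mathbf{M}_- \cong \ker D_N^-$, which will give Proposition~\ref{P:separation of variables in the von Neumann setting}.
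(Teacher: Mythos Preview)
Your proof is correct and follows essentially the same route as the paper: use the factorization \eqref{E:Qpm} to reduce $\ker Q^\pm$ to the $L^2$-kernel of the first-order operator $i\partial_r\pm ih$, then check which of the two ODE solutions $y'\pm hy=0$ is square-integrable using the asymptotics of $h$. You supply more detail (the explicit antiderivative formula and the regularity remark), but the argument is the same.
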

\begin{proof}
By \eqref{E:Qpm},  $y(r)\in \ker Q^\pm\subset L^2(\RR)$ if and only if 
\[
	y'(r)\ \pm \ h(r)y(r)\ = \ 0.
\] 
The lemma follows now from the fact that the solutions of the ODE\ $y'+ h y=0$
are square-integrable, whereas the solutions of the ODE\/
$y'-  h y=0$ are not.
\end{proof}

Using \eqref{E:kerMpm} and \eqref{E:kerM2} we obtain the following corollary of  Lemma~\ref{L:kerQ}
\begin{corollary}\label{C:kerM and kerD have the same tau dimension}
The $A$-Hilbert spaces  $\ker\M_+$ and  $\ker{}D_N^+$ are isomorphic. Similarly, the $A$-Hilbert spaces $\ker\M_-$ and $\ker{}D_N^-$ are isomorphic. In particular, 
\[
	\dim_\tau\ker \M_\pm \ = \ \dim_\tau D_N^\pm. 
\]
\end{corollary}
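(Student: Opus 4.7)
The proof is essentially a direct assembly of the three ingredients already in place, so the plan is mostly bookkeeping with one small subtlety about $A$-equivariance.

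First I would invoke the identities \eqref{E:kerMpm} to rewrite $\ker\M_\pm$ as kernels of the (non-negative) operators $\M_\mp\M_\pm$. The key observation made in Section~\ref{SS:square M} is that $\M_-\M_+$ and $\M_+\M_-$ preserve the grading $\EN = \EN^+\oplus\EN^-$, even though $\M$ itself does not. This is what allows the splitting
\[
\ker\M_+ \ = \ \bigl(\ker D_N^+\otimes\ker Q^+\bigr)\oplus\bigl(\ker D_N^-\otimes\ker Q^-\bigr),
\]
and the analogous decomposition for $\ker\M_-$ with the roles of $Q^+$ and $Q^-$ interchanged, coming from \eqref{E:kerM2}.

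Now I would plug in Lemma~\ref{L:kerQ}: the summand involving $\ker Q^-$ vanishes, and the summand involving $\ker Q^+$ is the tensor product with a one-dimensional scalar subspace $\CC\cdot y_0 \subset L^2(\RR)$, where $y_0$ is the (normalized) solution of $y'+hy=0$. Picking any such $y_0$ gives a concrete isometric embedding $s\mapsto s\otimes y_0$ of $\ker D_N^+$ onto $\ker\M_+$ which is plainly $A$-linear, since the $A$-action lives only on the first tensor factor and $y_0$ is a scalar. The same argument, with $y_0$ replaced by the corresponding generator of $\ker Q^+$ sitting in the second component, gives an $A$-linear isometric isomorphism $\ker D_N^-\cong \ker\M_-$.

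The equality of $\tau$-dimensions is then immediate because $\dim_\tau$ is invariant under $A$-linear isomorphisms of $A$-Hilbert spaces. There is essentially no obstacle here: all the analytic work has been done in Lemma~\ref{L:kerQ} and in the algebraic identity \eqref{E:M-M+}. The only point to be careful about is verifying that the tensor factor $\ker Q^\pm\subset L^2(\RR)$ is a \emph{trivial} $A$-module (which is clear, as the cylindrical direction carries no $A$-action), so that tensoring with it preserves the $A$-Hilbert space structure and hence $\dim_\tau$.
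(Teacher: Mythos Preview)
Your proof is correct and follows exactly the route the paper takes: combine \eqref{E:kerMpm}, \eqref{E:kerM2}, and Lemma~\ref{L:kerQ} to obtain $\ker\M_+\cong\ker D_N^+\otimes\ker Q^+$ and $\ker\M_-\cong\ker D_N^-\otimes\ker Q^+$, and then note that $\ker Q^+$ is a one-dimensional scalar space. The paper leaves these details implicit, so your explicit check that the isomorphism is $A$-linear (because the $A$-action lives only on the first tensor factor) is a welcome addition rather than a deviation.
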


\subsection{Proof of Proposition~\ref{P:separation of variables in the von Neumann setting}}
By Corollary~\ref{C:kerM and kerD have the same tau dimension}, we have
\[
	\ind_\tau{\bf M}=\dim_\tau\ker {\bf M}_+-\dim_\tau\ker {\bf M}_-
	=\dim_\tau \Ker D_N^+-\dim_\tau\Ker D_N^-
	=\ind_\tau D_N.
\]
\hfill $\square$


\section{A manifold with cylindrical ends}\label{S:cylindrical end}

In this section we prove Theorem~\ref{T:computation of odd-dimensional case} for the special case of a manifold with cylindrical ends. 

\subsection{A manifold with cylindrical ends}\label{SS:cylindrical end}
We use the notation of Section~\ref{SS:Callias-type theorem}. In addition we assume that $M_+= N\times[1,\infty)$ for some compact manifold $N$. In other words, we assume that 
\begin{equation}\label{E:cylindrical end}
	M\ = \ M_-\cup_N \left(N\times[1,\infty)\right),
\end{equation}
where  $M_-$ is a compact manifold with boundary, whose interior contains an essential support of the potential $\Phi\in \End_A(\Sigma)$.

\subsection{A Dirac operator on the hypersurface $N$}\label{SS:Dirac on N}
We identify $N$ with $N\times\{1\}\subset M$. Let $\Sigma_N$ and $\Phi_N$ denote the restrictions of $\Sigma$ and $\Phi$ to $N\simeq N\times\{1\}$. Since an essential support of $\Phi$ is contained in the interior of $M_-$, the bundle map $\Phi_N$ is non-degenerate.  As in Section~\ref{SS:Callias-type theorem} we decompose  
\begin{equation}\label{E:grading SigmaN}
	\Sigma_N\ =\  \Sigma_{N+}\oplus\Sigma_{N-},
\end{equation}
where, for every $x\in N$, $\Sigma_{N+}(x)$ (respectively $\Sigma_{N-}(x)$) is the closed $A$-invariant subspace of $\Sigma_N(x)$ obtained as the image of the spectral projection of $\Phi_N(x)$ corresponding to the interval $(0,\infty)$ (resp. $(-\infty,0)$).
We denote by $\Phi_{N\pm}$ the restrictions of $\Phi_N$ to $\Sigma_{N\pm}$. It follows from \eqref{E:[Phi,c]} that the grading \eqref{E:grading SigmaN} is preserved by  Clifford multiplication, i.e.
\begin{equation}\label{E:c(xi)Npm}
 	c(\xi):\,\Sigma_{N\pm}\ \to \ \Sigma_{N\pm}, 
 	\qquad \text{for all}	\quad \xi\in T^*N.
\end{equation}

The bundle $\Sigma_{N+}$ plays in what follows the same role as the bundle $E_N$ in Section~\ref{S:cylinder}. However, in general, the connection $\nabla^{\Sigma_N}$, induced on $\Sigma_N$ by $\nabla^\Sigma$, does not preserve the grading \eqref{E:grading SigmaN}. That is why we need to define new connections 
\[
	\nabla^{\Sigma_{N\pm}}\ :=\ \pr_{\Sigma_{N\pm}}\circ \nabla^{\Sigma_N}
\] 
on $\Sigma_{N\pm}$, cf. \eqref{E:nSigmaN+}. This makes the situation of this section slightly different from the one considered in Section~\ref{S:cylinder}. To apply the results of Section~\ref{S:cylinder} we need to {\em deform} the connection $\nabla^{\Sigma_N}$ so that it does preserve the grading. Lemma~\ref{L:reduction to the cylinder in the case of cylindrical end1} guarantees that such a deformation exists and preserves the index.

Let $D_{N}=D_{N+}\oplus D_{N-}$ be the  Dirac operator on $\Sigma_N$ associated to the connection $\nabla^{\Sigma_{N+}}\oplus\nabla^{\Sigma_{N-}}$ and the zero potential, cf.  \eqref{E:DN}.

\subsection{A Dirac operator on the cylinder $N\times\RR$}\label{SS:Dirac on cylinder}
Let $p:N\times\RR\to N$ be the projection and denote by 
\begin{equation}\label{E:hSigmaNpm}
		\widehat{\Sigma}_{N}\ = \ 
	\widehat{\Sigma}_{N+}\oplus \widehat{\Sigma}_{N-},
	\qquad \hSigma_{N\pm} := p^*\Sigma_{N\pm}, 
\end{equation}
the pull-back bundle over $N\times\RR$. Let $\nabla^{\hSigmaN}$ denote the connection on $\hSigmaN$ obtained by pulling back the connection $\nabla^{\Sigma_N}$. Notice that in general {\em this connection does not preserve the grading \eqref{E:hSigmaNpm}}.

Endowed with the connection $\nabla^{\hSigmaN}$ and the Clifford action \eqref{E:clifford action} the bundle $\hSigmaN$ becomes a Dirac bundle. Let 
\[
	\widehatD_N:\, C^\infty(N\times\RR,\widehat\Sigma_{N+})
	\ \to\ C^\infty(N\times\RR,\widehat\Sigma_{N+})
\]
denote the Dirac operator on $\hSigmaN$ associated to the connection $\nabla^{\hSigmaN}$ and the zero potential. 

\begin{remark}\label{R:hatDvsD}
We warn the reader that the operator $\DN$ introduced above is not related to $D_N$ by formula \eqref{E:hatD}. This is because $D_N$ was defined using the connection 
\[
	\nabla^{\Sigma_{N+}}\oplus\nabla^{\Sigma_{N-}} \ \not= \ \nabla^{\Sigma_N}.
\]
This is one of the difficulties which arise in trying to use the results of Section~\ref{S:cylinder} in the proof of Theorem~\ref{T:computation of odd-dimensional case}. We address this problem by deforming the connection $\nabla^{\Sigma_N}$ in such a way that the index of the operator $\DN$ does not change, cf. Lemma~\ref{L:reduction to the cylinder in the case of cylindrical end1}. 
\end{remark}

Finally, we denote by $\widehat{\Phi}_{N}\in \End_A(\widehat{\Sigma}_N)$ and $\widehat{\Phi}_{N\pm}\in \End_A(\widehat{\Sigma}_{N\pm})$ the lifts of $\Phi_N$ and $\Phi_{N\pm}$ to the cylinder $N\times\RR$.
Notice that $\widehat\Phi_{N+}$ is a strictly positive operator, while $\widehat\Phi_{N-}$ is a strictly negative operator.

\subsection{A Callias-type theorem for a manifold with cylindrical ends}\label{SS:Callias cylindrical end}
The main result of this section is the following special case of Theorem~\ref{T:computation of odd-dimensional case}.

\begin{proposition}\label{P:reduction to the cylinder in the case of cylindrical end}
In the situation of Theorem~\ref{T:computation of odd-dimensional case}, suppose that $M=M_-\cup_N \left(N\times[1,\infty)\right)$, where $M_-$ is a compact submanifold with boundary $\partial{M_-}\simeq N$.
Assume that there exist $\epsilon>0$ and an open neighborhood of $N$ in $M_-$ isometric to $N\times (1-\epsilon,1]$ such that $M\setminus \left(N\times(1-\epsilon, \infty)\right)$ is an essential support of $\Phi$.
Suppose that the restriction $\Sigma\big|_{N\times(1-\epsilon,\infty)}$ coincides with $\widehat{\Sigma}_{N+}\oplus\widehat{\Sigma}_{N-}$, the restriction $D\big|_{N\times(1-\epsilon,\infty)}$ coincides with $\widehat{D}_N$ 
and $\Phi\big|_{N\times(1-\epsilon,\infty)} =\ \widehat{\Phi}_{N+}\oplus\widehat{\Phi}_{N-}$.
Then 
\begin{equation}\label{E:reduction to the cylinder in the case of cylindrical end}
	\ind_\tau B_\Phi\ = \ \ind_\tau D_{N+}.
\end{equation}
\end{proposition}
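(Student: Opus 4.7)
The strategy is to reduce, via Theorem~\ref{T:relative index theorem in von neumann setting}, the computation of $\ind_\tau B_\Phi$ on the cylindrical-end manifold $M$ to the model situation on $N\times\RR$ treated in Proposition~\ref{P:separation of variables in the von Neumann setting}.

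I would begin with two successive deformations on the cylindrical end designed to bring the operator into a form that splits along the spectral decomposition of $\Phi_N$. First, using the convex homotopy $\Phi_t := (1-t)\widehat{\Phi}_N + t(P_+ - P_-)$, where $P_\pm$ denote the fibrewise projections onto $\widehat{\Sigma}_{N\pm}$, I reduce to the case where the restriction of $\Phi$ to the cylindrical end is the $r$-independent sign operator $P_+ - P_-$; along this path each $\Phi_t$ is positive on $\widehat{\Sigma}_{N+}$ and negative on $\widehat{\Sigma}_{N-}$, so it remains admissible with the same essential support, and stability of the $\tau$-index under such homotopies of admissible potentials preserves $\ind_\tau B_\Phi$. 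Second, I would deform the connection $\nabla^{\Sigma_N}$, within a compact collar, to the block-diagonal connection $\nabla^{\Sigma_{N+}} \oplus \nabla^{\Sigma_{N-}}$; since the difference is a bounded, compactly supported endomorphism-valued one-form, the resulting perturbation of $B_\Phi$ is a compactly supported bundle map, and \cite[Theorem~2.21]{BrCe15} keeps the index constant. After these reductions, on the cylindrical end $B_\Phi$ splits as an orthogonal direct sum of two Callias-type operators associated with the ungraded triples $(\widehat{\Sigma}_{N+}, \widehat{D}_{N+}, +1)$ and $(\widehat{\Sigma}_{N-}, \widehat{D}_{N-}, -1)$.

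Next, I would apply Theorem~\ref{T:relative index theorem in von neumann setting} to $M_0 = M$ against a carefully chosen reference manifold $M_1$. Guided by the structure above, I pick $M_1$ to contain the cylinder $N\times\RR$ carrying the model operator $\mathbf{M}$ associated to the graded Dirac $A$-Hilbert bundle $\Sigma_{N+}$ (whose index, by Proposition~\ref{P:separation of variables in the von Neumann setting}, equals $\ind_\tau D_{N+}$), glued to a compact manifold along $N$ so that the cut-paste swap produces a compact manifold $M_2$, with $\ind_\tau B_2 = 0$ by Proposition~\ref{P:compact index}, and the full model cylinder $M_3 \cong N\times\RR$ with operator $\mathbf{M}$. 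Since $M_1$ is constructed so that $B_1$ itself lives on a compact manifold with an ungraded-triple Callias operator (hence $\ind_\tau B_1 = 0$), the relative index formula $\ind_\tau B_\Phi + \ind_\tau B_1 = \ind_\tau B_2 + \ind_\tau B_3$ collapses to the desired identity $\ind_\tau B_\Phi = \ind_\tau D_{N+}$.

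The main obstacle I anticipate is the precise construction of the reference manifold $M_1$ so that the cut-paste produces simultaneously a compact $M_2$ and the clean model cylinder $M_3$; this requires matching all bundle, Dirac and potential data on the tubular neighborhood of the cut, and in particular handling the $\widehat{\Sigma}_{N-}$ summand (whose potential must be either reversed on $M_1$'s left half or absorbed into the compact cap) so that only $\ind_\tau D_{N+}$ survives in the final formula. A secondary technical point is that the first homotopy in the reduction step is not compactly supported, so stability of the $\tau$-index along $\Phi_t$ cannot be invoked directly from \cite[Theorem~2.21]{BrCe15}; instead it requires a general $\tau$-Fredholm stability argument along a norm-continuous family of admissible potentials with uniform ellipticity estimates outside the essential support.
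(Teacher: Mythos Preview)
Your two preliminary deformations are essentially the paper's Lemmas~\ref{L:reduction to the cylinder in the case of cylindrical end1} and~\ref{L:reduction to the cylinder in the case of cylindrical end2} (in reversed order), and the secondary technical point you raise is handled exactly as you guess: since $\Phi$ is constant in the axial direction it is uniformly bounded, the homotopy has fixed domain, and \cite[Lemma~7.3]{BrCe15} applies. The paper also needs to rescale by a factor $\lambda\ge 1$ to keep admissibility when deforming the connection, a point you did not mention but which is minor.

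The genuine gap is in your cut-and-paste step. Your description of $M_1$ is self-contradictory: you say it ``contains the cylinder $N\times\RR$'' yet also that ``$B_1$ itself lives on a compact manifold''. Any $M_1$ compatible with the gluing must carry the full bundle $\widehat{\Sigma}_{N+}\oplus\widehat{\Sigma}_{N-}$ near $N$, not just $\widehat{\Sigma}_{N+}$, so after cut-and-paste the cylinder piece will not carry only the model operator $\mathbf{M}$; something must kill the $\widehat{\Sigma}_{N-}$ summand. You flag this as ``the main obstacle'' but do not resolve it, and the resolution is precisely the content of the paper's argument. The paper takes $M_1=-M$ (same manifold, opposite orientation), so that the two cut-and-paste outputs are the compact double $M_-\cup_N(-M_-)$ and the cylinder $N\times\RR$. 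On $M_1$ the potential $\Phi_1$ is chosen to equal the step function $h$ on $\widehat{\Sigma}_{N+}$ and the constant $-1$ on $\widehat{\Sigma}_{N-}$. This has two effects: on the resulting cylinder the $\widehat{\Sigma}_{N-}$ block has constant potential $-1$, so $B_2^2>0$ there and only the model operator $\mathbf{M}$ on $\widehat{\Sigma}_{N+}$ contributes; and on $M_1$ itself the potential $\Phi_1$ coincides with $-\Id$ outside a compact set, so \cite[Theorem~2.21]{BrCe15} gives $\ind_\tau B_{\Phi_1}=\ind_\tau B_{-\Id}=0$ since $B_{-\Id}^2>0$. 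Thus $\ind_\tau B_1=0$ not because $M_1$ is compact (it is not) but because its potential is a compactly supported perturbation of an invertible one. Once you supply this specific choice of $(M_1,\Phi_1)$, your outline becomes the paper's proof.
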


The rest of this section is occupied with the proof of Proposition~\ref{P:reduction to the cylinder in the case of cylindrical end}.

\subsection{Deformation of the data on the cylindrical end}\label{SS:product on the end}
In the next two lemmas we construct a deformation of the restriction of the connection $\nabla^\Sigma$ and the potential $\Phi$ to the cylindrical end $N\times(1-\epsilon,\infty)$, and show that the $\tau$-index of $B_\Phi$ is preserved by these deformations.

In particular, in Lemma~\ref{L:reduction to the cylinder in the case of cylindrical end1} we construct a new connection $\nabla^{\prime\Sigma}$ on $\Sigma$. We denote by $D'$ the Dirac operator associated with the connection $\nabla^{\prime\Sigma}$ and the zero potential, cf. Section~\ref{SS:Callias index}.  We also denote by $\nabla^{\hSigma_{N\pm}}$ the connection on $\hSigma_{N\pm}$ obtained by pulling back the connection $\nabla^{\Sigma_{N\pm}}$.



\begin{lemma}\label{L:reduction to the cylinder in the case of cylindrical end1}
Under the hypotheses of Proposition~\ref{P:reduction to the cylinder in the case of cylindrical end}, there exists a connections $\nabla^{\prime\Sigma}$  on $\Sigma$ and a number $\lambda\ge1$, such that 
\begin{enumerate}[label=(\roman*)]
 \item \ \ 
 $\nabla^{\prime\Sigma}\big|_{N\times(1-\epsilon,\infty)}\ = \ 
	\nabla^{\widehat{\Sigma}_{N+}}\oplus \nabla^{\widehat{\Sigma}_{N-}}$;
 \item  \ \
 $\lambda\Phi$ is an admissible endomorphism for the pair $(\Sigma,D')$;
 \item \ \  $\ind_\tau B'_{\lambda\Phi}= \ind_\tau{}B_\Phi$ for all $t\in[0,1]$, where $B'_{\lambda\Phi}$ denotes the Callias-type operator associated with the triple $(\Sigma,D',\lambda\Phi)$.
\end{enumerate}
\end{lemma}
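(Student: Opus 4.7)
The plan is to construct $\nabla^{\prime\Sigma}$ by a cut-off interpolation between $\nabla^\Sigma$ and the split connection, to choose $\lambda\geq 1$ large enough so that the scaled potential $\lambda\Phi$ restores admissibility after the connection is altered, and then to connect $B_\Phi$ with $B^{\prime}_{\lambda\Phi}$ via two successive norm-continuous families of Callias-type operators whose $\tau$-index is preserved by the stability result of \cite{BrCe15}. To set things up, let $A$ denote the off-block-diagonal part of $\nabla^\Sigma$ on $N\times(1-\epsilon,\infty)$ relative to the orthogonal decomposition $\widehat{\Sigma}_N=\widehat{\Sigma}_{N+}\oplus\widehat{\Sigma}_{N-}$; since $\nabla^\Sigma$ is metric, $A$ is a skew-adjoint $\End_A(\widehat{\Sigma}_N)$-valued $1$-form. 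Pick a smooth cut-off $\chi\in C^\infty(M)$ supported in $N\times(1-\epsilon,\infty)$ with $\chi\equiv1$ on $N\times[1,\infty)$, and set
\[
    \nabla^{\prime\Sigma}\ :=\ \nabla^\Sigma - \chi A.
\]
This is a smooth metric connection that agrees with $\nabla^\Sigma$ off the collar and with $\nabla^{\widehat{\Sigma}_{N+}}\oplus\nabla^{\widehat{\Sigma}_{N-}}$ on $N\times[1,\infty)$, so (i) holds.

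Let $D'$ be the Dirac operator associated with $\nabla^{\prime\Sigma}$ and the zero potential. Since $D|_{N\times(1-\epsilon,\infty)}=\widehat{D}_N$ itself has zero potential, the potential $V$ appearing in $D=\sum c(e^i)\nabla^\Sigma_{e_i}+V$ is compactly supported in $M_-$, so
\[
    W\ :=\ D'-D\ =\ -c(\chi A)-V
\]
is a globally bounded self-adjoint bundle endomorphism of $\Sigma$. Moreover, $\Phi|_{N\times(1-\epsilon,\infty)}=\widehat{\Phi}_{N+}\oplus\widehat{\Phi}_{N-}$ is the lift of $\Phi_N$, a bounded endomorphism over the compact manifold $N$, and $\Phi$ is continuous on the compact $M_-$, so $\Phi$ is bounded on all of $M$. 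Consequently $[W,\Phi]$ is a bounded endomorphism and
\[
    \|[D',\lambda\Phi]\|\ \leq\ \lambda\bigl(\|[D,\Phi]\|+\|[W,\Phi]\|\bigr).
\]
Combining this with the admissibility inequality $\Phi^2\geq d+\|[D,\Phi]\|$ on $M\setminus K$ and using $\lambda\geq 1$ to write $\lambda^2\Phi^2\geq \lambda^2 d+\lambda\|[D,\Phi]\|$, one concludes that for $\lambda$ sufficiently large the bound $(\lambda\Phi)^2\geq d+\|[D',\lambda\Phi]\|$ holds on the same set $M\setminus K$, which is (ii).

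For (iii) I would decompose the deformation into two norm-continuous one-parameter families of Callias-type operators on the common domain $\Dom(\mathcal{D})$. The first, $B^{(1)}_s:=\mathcal{D}+\mathcal{F}_{\mu_s\Phi}$ with $\mu_s:=1+s(\lambda-1)\in[1,\lambda]$ and $s\in[0,1]$, interpolates $B_\Phi$ to $B_{\lambda\Phi}$; the admissibility of $\mu_s\Phi$ for $(\Sigma,D)$ follows from $(\mu_s\Phi)^2=\mu_s^2\Phi^2\geq \mu_s^2(d+\|[D,\Phi]\|)\geq d+\mu_s\|[D,\Phi]\|$ on $M\setminus K$. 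The second, $B^{(2)}_s:=\mathcal{D}+s\widetilde{W}+\mathcal{F}_{\lambda\Phi}$ with $\widetilde{W}$ the odd self-adjoint endomorphism encoding $W$ in the doubled bundle $\Sigma\oplus\Sigma$, interpolates $B_{\lambda\Phi}$ to $B^{\prime}_{\lambda\Phi}$; the estimate from (ii), applied to $D+sW$ in place of $D'$, yields uniform admissibility of $\lambda\Phi$ for $(\Sigma,D+sW)$ on a common $M\setminus K$ once $\lambda$ has been enlarged if necessary. Each family is norm-continuous on $\Dom(\mathcal{D})$ because all perturbations are bounded, hence continuous in the gap topology, and \cite[Lemma~7.3]{BrCe15} gives that $\ind_\tau$ is constant along each family, so $\ind_\tau B_\Phi=\ind_\tau B_{\lambda\Phi}=\ind_\tau B^{\prime}_{\lambda\Phi}$.

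The principal delicate point is the uniform choice of essential support $K$ and constant $d$ across the connection-deformation family $B^{(2)}_s$: enlarging $\lambda$ scales both sides of the admissibility inequality, but one must check that a single compact $K$ works simultaneously for every $s\in[0,1]$. The boundedness of $\Phi$ and of $W$, both arising ultimately from the compactness of the cross-section $N$, is what makes this simultaneous control possible and is the structural reason the lemma holds.
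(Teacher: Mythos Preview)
Your approach is essentially the same as the paper's: identify the difference between $D$ and the split-connection Dirac operator on the cylindrical end as a bounded self-adjoint bundle map, rescale $\Phi$ by a large $\lambda$ to restore admissibility uniformly along the interpolation, and invoke the stability of the $\tau$-index \cite[Lemma~7.3]{BrCe15} along the resulting bounded-perturbation homotopies. The paper organizes the same ingredients slightly differently---it works at the level of the operator (defining the off-diagonal piece $\Psi$ of $\widehat{D}_N$ directly), runs the homotopy $D_t=D-t\Psi$, and only afterwards picks an abstract $\nabla^{\prime\Sigma}$ satisfying (i), closing the gap between $B^1_{\lambda\Phi}$ and $B'_{\lambda\Phi}$ via the compact-perturbation result \cite[Theorem~2.21]{BrCe15}. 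Your version is a bit more streamlined since you build $\nabla^{\prime\Sigma}$ explicitly from the outset and avoid that extra step.

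One small correction: with $\chi\equiv 1$ only on $N\times[1,\infty)$, your $\nabla^{\prime\Sigma}$ agrees with the split connection only there, not on all of $N\times(1-\epsilon,\infty)$ as (i) demands. Take $\chi\equiv 1$ on $N\times(1-\epsilon_1,\infty)$ for some $0<\epsilon_1<\epsilon$ (and supported in $N\times(1-\epsilon,\infty)$); then (i) holds with $\epsilon$ replaced by $\epsilon_1$, which is all that is needed downstream. The paper makes exactly the same move, introducing an auxiliary $\epsilon_1<\epsilon$.
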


\begin{lemma}\label{L:reduction to the cylinder in the case of cylindrical end2}
Under the hypotheses of Proposition~\ref{P:reduction to the cylinder in the case of cylindrical end} assume that 
\begin{equation}\label{E:deformation of connection}
	\nabla^{\Sigma}\big|_{N\times(1-\epsilon,\infty)}\ =\ 
	\nabla^{\widehat{\Sigma}_{N+}}\oplus \nabla^{\widehat{\Sigma}_{N-}}.
\end{equation}
Then there exists an admissible endomorphism $\Phi'$ for $(\Sigma, D)$ such that 
\begin{enumerate}[label=(\roman*)]

\item 
 The restriction of $\Phi'$ to $N\times (1-\epsilon,\infty)$ is the grading operator on $\Sigma\big|_{N\times (1-\epsilon,\infty)}= \widehat{\Sigma}_{N+}\oplus\widehat{\Sigma}_{N-}$, i.e. $\Phi'\big|_{\widehat{\Sigma}_{N\pm}}=\pm1$;

\item  $\ind_\tau B_{\Phi'}= \ind_\tau B_\Phi$, where $B_{\Phi'}$ denotes the Callias-type operator associated with the triple $(\Sigma, D,\Phi')$.
\end{enumerate}
\end{lemma}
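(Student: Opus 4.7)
My plan is to produce $\Phi'$ by smoothly replacing $\Phi$ with the grading operator on the cylindrical end via a cut-off, and to deduce the index equality via a homotopy of admissible bounded endomorphisms.

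Let $G:=1_{\widehat{\Sigma}_{N+}}\oplus(-1_{\widehat{\Sigma}_{N-}})$ denote the grading operator on the cylindrical end. The hypothesis~\eqref{E:deformation of connection} makes $\nabla^\Sigma$ split on $N\times(1-\epsilon,\infty)$, so $G$ is parallel there. Since an essential support of $\Phi$ lies in $M\setminus(N\times(1-\epsilon,\infty))$, the admissibility of $\Phi$ gives $\widehat{\Phi}_{N+}\ge\sqrt{d}$ and $\widehat{\Phi}_{N-}\le-\sqrt{d}$ on the cylindrical end. I choose a smooth cut-off $\rho\colon M\to[0,1]$ supported in $N\times(1-\epsilon,\infty)$ and equal to $1$ on $N\times[1-\epsilon/2,\infty)$, and define
\[
  \Phi' \ := \ (1-\rho)\Phi+\rho G,\qquad \Phi_t \ := \ \Phi+t\rho(G-\Phi),\ \ t\in[0,1].
\]
Each $\Phi_t$ preserves the splitting $\widehat{\Sigma}_{N+}\oplus\widehat{\Sigma}_{N-}$ on the cylindrical end, where it restricts to $(1-t\rho)\widehat{\Phi}_{N\pm}\pm t\rho$, and $\Phi'$ satisfies~(i) (with $\epsilon$ replaced by $\epsilon/2$).

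Next I verify that each $\Phi_t$ is admissible for $(\Sigma,D)$ with essential support in a common compact set. Writing $D=\sum c(e^i)\nabla_{e_i}+V$ and using $\nabla G=0$, one has $[D,G]=[V,G]$, and
\[
  [D,\Phi_t] \ = \ (1-t\rho)[D,\Phi]+t\rho[V,G]+t\,c(d\rho)(G-\Phi)
\]
is a uniformly bounded bundle map in $t$, whose last summand has compact support in $\supp(d\rho)$. On the cylindrical end $\Phi_t^2$ has smallest eigenvalue bounded below pointwise by $((1-t\rho)\sqrt{d}+t\rho)^2\ge\min(d,1)$ uniformly in $t$, while outside the cylindrical end $\Phi_t=\Phi$ is admissible. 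One therefore obtains $\Phi_t^2\ge d'+\|[D,\Phi_t]\|$ on $M\setminus K'$ for some positive $d'$ and a compact $K'$ independent of $t$. Since each $\Phi_t$ is bounded, Remark~\ref{R:lambdaPhi} gives that the domain of $B_{\Phi_t}$ is independent of $t$, and \cite[Lemma~7.3]{BrCe15} implies that $t\mapsto\ind_\tau B_{\Phi_t}$ is constant on $[0,1]$; evaluating at the endpoints yields~(ii).

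The main obstacle is the uniform admissibility estimate in the preceding paragraph: the convex combinations $(1-t\rho)\widehat{\Phi}_{N\pm}\pm t\rho$ must give $\Phi_t^2\ge d'+\|[D,\Phi_t]\|$ uniformly in $t\in[0,1]$ and uniformly on the cylindrical end. This reduces to the elementary pointwise inequality $((1-s)\sqrt{d+a}+s)^2\ge d'+(1-s)a$ for $s\in[0,1]$ and $a=\|[D,\Phi](x)\|$ varying in its bounded range; if this margin is too tight in the given data, one first rescales $\Phi$ to $\lambda\Phi$ with a sufficiently large $\lambda\ge 1$, relying on Remark~\ref{R:lambdaPhi} to preserve the $\tau$-index before performing the interpolation.
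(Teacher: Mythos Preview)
Your overall strategy matches the paper's: define $\Phi'$ as a cutoff times the grading operator $G$, connect $\Phi$ to $\Phi'$ by a bounded linear homotopy, and verify admissibility of the homotopy uniformly in $t$. The only cosmetic difference is that the paper's $\Phi'$ vanishes on the compact part rather than equaling $\Phi$ there.

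The gap is in your final paragraph. The ``elementary pointwise inequality'' $((1-s)\sqrt{d+a}+s)^2\ge d'+(1-s)a$ genuinely fails when $d+a>4$: with $d=0.01$, $a=100$, $s=\tfrac12$, the left side is about $30$ and the right side about $50$. Your proposed remedy---replace $\Phi$ by $\lambda\Phi$ and then interpolate to $G$---does not help. After rescaling, the needed inequality reads $((1-s)\lambda\sqrt{d+a}+s)^2\ge d'+(1-s)\lambda a$; at $s=1-c/\lambda$ with $\lambda$ large the left side tends to $(c\sqrt{d+a}+1)^2$ while the right side tends to $d'+ca$, and with $c=0.1$, $a=100$ this is roughly $4$ versus $10$. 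The underlying issue is that only one endpoint of the homotopy is scaled: the other endpoint $G$ stays of size $1$, so near $t=1$ one has $\Phi_t^2\approx 1$ while $\|[D,\Phi_t]\|$ can still be of order $\|[D,\Phi]\|_\infty$. The paper's fix is to rescale the \emph{entire} homotopy, working with $\lambda\Phi_t$ for $\lambda$ large enough that the quadratic-in-$\lambda$ lower bound on $(\lambda\Phi_t)^2$ beats the linear-in-$\lambda$ commutator bound, and then to invoke the rescaling invariance of the $\tau$-index separately at each endpoint: $\ind_\tau B_{\Phi'}=\ind_\tau B_{\lambda\Phi'}=\ind_\tau B_{\lambda\Phi}=\ind_\tau B_{\Phi}$.
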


The proofs of these lemmas are presented at the end of this section, after we explained how the lemmas are used to prove  Proposition~\ref{P:reduction to the cylinder in the case of cylindrical end}.

\subsection{Scketch of the proof of Proposition~\ref{P:reduction to the cylinder in the case of cylindrical end}}\label{SS:plan proof reduction to cylinder}
If follows from Lemmas~\ref{L:reduction to the cylinder in the case of cylindrical end1} and \ref{L:reduction to the cylinder in the case of cylindrical end2} that it is enough to prove the proposition for the case when the connection $\nabla^\Sigma$ satisfies $\nabla^{\Sigma_N}= \nabla^{\Sigma_{N+}}\oplus\nabla^{\Sigma_{N-}}$ and  $\Phi_N$ is the grading operator on $\Sigma_N$.

Let $M_1=-M$ denote a copy of $M$ with the opposite orientation. Then $M_1$ is naturally isomorphic to $(N\times(-\infty,1])\cup_N(-M_-)$. The bundle $\Sigma$ induces a Dirac bundle $\Sigma_1:=-\Sigma$ on $M_1$, cf. Section~\ref{SS:opposite orientation}. Let $D_1$  be the corresponding Dirac operator. 

In Section~\ref{SS:operators on -M} we construct a potential $\Phi_1$ on $\Sigma_1$ such that 
\begin{itemize}
 \item[(i)] the restriction of the Callias-type operator $B_{\Phi_1}$ associated with the triple $(\Sigma_1,D_1,\Phi_1)$ to the cylinder $N\times(-\infty,1])$ is equal to $\mathbf{M}\oplus{T}$, where $\mathbf{M}$ is the model operator of Definition~\ref{D:model operator}, and $T^2>0$;
 \item[(ii)]  $\ind_\tau B_{\Phi_1}=0$. 
\end{itemize} 

The restriction of all the data to neighborhoods of $N\times\{1\}$ in  $M$ and $M_1$ coincide. Hence, we can apply the relative index theorem to compute 
\begin{equation}\label{E:BPhi=BPhi+}
		\ind_\tau B_\Phi\ = \ \ind_\tau B_\Phi\ +\ \ind_\tau B_{\Phi_1}.
\end{equation}
The cut and paste procedure of Section~\ref{SS:A relative index theorem} applied to manifolds $M$ and $M_1$ yields manifolds
\[
	M_2\ = \ N\times\RR \quad\text{and}\quad M_3\ = \ M_-\cup_N(-M_-).
\]
Let $B_2$ and $B_3$ be the Callias-type operators on $M_2$ and $M_3$ obtained from $B_\Phi$ and $B_{\Phi_1}$ by cutting and pasting. One readily sees that $B_2$ is equal to $\mathbf{M}\oplus{T}$, where $\mathbf{M}$ is the model operator of Definition~\ref{D:model operator}, and $T^2>0$. Hence, $\ind_\tau{}B_2= \ind_\tau\textbf{M}$. Also $B_3$ is a Callias-type  operator on a {\em compact}  manifold $M_3$. Thus\/ $\ind_\tau{}B_3=0$ by Proposition~\ref{P:compact index}. The relative index theorem and \eqref{E:BPhi=BPhi+} imply that 
\begin{equation}\label{E:indBPhi=indM}
	\ind_\tau B_\Phi\ = \ \ind_\tau B_2\ +\ \ind_\tau B_3 \ =\
	\ind_\tau \mathbf{M}.
\end{equation}
Proposition~\ref{P:reduction to the cylinder in the case of cylindrical end} follows now from Proposition~\ref{P:separation of variables in the von Neumann setting}.

\subsection{The manifold with the reversed orientation}\label{SS:opposite orientation}
Before presenting the details of the proof of Proposition~\ref{P:reduction to the cylinder in the case of cylindrical end}, we introduce some additional notation. 

For an oriented manifold $W$ we denote by $-W$ a copy of this manifold with the opposite orientation. If $E$ is a Dirac bundle over $W$, we denote by $-E$ the same bundle viewed as a vector bundle over $-W$, endowed with the {\em opposite} Clifford action. This means that a vector $\xi\in T^*W\simeq T^*(-W)$ acts on $-E$ by $c(-\xi)$. The change of the Clifford action is needed because we reversed the orientation of $M_-$  (for more details about this construction we refer to \cite[Section~2.3.2]{Bunke1995} and \cite[Chapter~9]{BooWoj93}).

From now on we assume that  $W = W_-\cup_N(N\times[1,\infty))$. Then there is a natural orientation preserving isometry  
\[
	\psi:-W\ \overset{\sim}\longrightarrow \  (N\times(-\infty,1])\cup_N(-W_-),
\]
such that 
\begin{alignat*}{2}
	\psi(x)\ &=\ x, \qquad &&\text{if}\quad x\in -W_-\simeq W_-\\ 
 	\psi(y,t)\ &=\ (y,2-t), \qquad &&\text{if}\quad (y,t)\in N\times[1,\infty).
\end{alignat*}
To simplify the notation we will skip $\psi$ from the notation and simply write 
\begin{equation}\label{E:-W}
	-W\ =\  (N\times(-\infty,1])\cup_N(-W_-).
\end{equation}

Let $E_N$ be a bundle over $N$ and let  $\widehat{E}_N$ denote the pull-back of $E_N$ by the projection map $p:N\times\RR\to N$. Suppose that $E$ is a bundle over $W = W_-\cup_N(N\times[1,\infty))$ whose restriction to $N\times[1,\infty)$ coincides with the restriction of $\widehat{E}_N$ to the same cylinder. Recall that $-E$ is a bundle over the manifold  \eqref{E:-W}. One readily sees that 
\begin{equation}\label{E:-E restricted to cylinder}
	-E\Big|_{N\times(-\infty,1]}\ \simeq\ 
	\widehat{E}_N\Big|_{N\times(-\infty,1]}.
\end{equation}

\subsection{Construction of a potential on $-M$}\label{SS:operators on -M}
Consider the manifold $M_1:=-M$. Then by \eqref{E:-W} we have 
\[
	M_1\ = \ \left(N\times (-\infty,1]\right) \cup_N (-M_-).
\] 

Consider the bundle $\Sigma_1=-\Sigma$ over $M_1$. By \eqref{E:-E restricted to cylinder} the restriction of $\Sigma_1$ to the cylinder part coincides with $\widehat{\Sigma}_N$. In particular, this restriction has a natural grading \eqref{E:hSigmaNpm}
\[
	\widehat{\Sigma}_{N}\ = \ \widehat{\Sigma}_{N+}\oplus \widehat{\Sigma}_{N-}.
\]	
We denote by $D_1$ the Dirac operator associated with the connection on  $\Sigma_1$.

Let $h:\RR\to[-1,1]$ be a smooth function such that
\begin{equation}\label{E:step2:1}
	h(r)=\left\{\begin{array}{rr}-1,\quad &r<-1\\1\qquad &r>0\\
	\end{array}\right., \qquad\qquad r\in \RR.
\end{equation}
By a slight abuse of notation we also denote by $h$ the induced function $h:N\times\RR\to [-1,1]$. Let $\Phi_1$ be the admissible endomorphism of $\Sigma_1$, which  coincides with $\Phi$ on $\left(N\times(1-\epsilon,1]\right)\cup_N(-M_-)$ and such that
\[
	\Phi_1\big|_{N\times (-\infty,1+\epsilon)}\ =\
	\left(\begin{array}{cc}h&0\\0&-1\end{array}\right).
\]
Notice, that we can view the product $N\times(1-\epsilon,1+\epsilon)$ as a subset of both manifolds $M$ and $M_1$. Then the restrictions of $\Phi$ and $\Phi_1$ to this subset coincide. 

\begin{lemma}\label{L:indPhi1=0}
Let $B_{\Phi_1}$ denote the Callias-type operator associated with the pair $(\Sigma_1,\Phi_1)$. Then 
\begin{equation}\label{E:indPhi1=0}
	\ind_\tau B_{\Phi_1}\ = \ 0.
\end{equation}
\end{lemma}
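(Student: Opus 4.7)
The plan is to reduce the computation to a simpler Callias-type operator via a homotopy of the potential. The key observation is that on the far cylindrical end $N\times(-\infty,-1)$ the function $h$ is identically $-1$, so $\Phi_1 = \diag(-1,-1) = -\id_{\Sigma_1}$ there. Thus $\Phi_1$ already equals the constant endomorphism $-\id_{\Sigma_1}$ outside a compact subset $K\subset M_1$ (for instance $K=(N\times[-1,1+\epsilon])\cup(-M_-)$), which suggests deforming $\Phi_1$ globally to $-\id_{\Sigma_1}$ through admissible potentials and invoking homotopy invariance of the $\tau$-index.

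Concretely, I would set $\Psi_t := (1-t)\Phi_1 - t\,\id_{\Sigma_1}$ for $t\in[0,1]$, so that $\Psi_0=\Phi_1$ and $\Psi_1=-\id_{\Sigma_1}$. On $M_1\setminus K$, where $\Phi_1=-\id_{\Sigma_1}$, we have $\Psi_t=-\id_{\Sigma_1}$ for every $t$; hence outside $K$ one has $\Psi_t^2=\id$ and $[D_1,\Psi_t]=0$, so Condition~(ii) of Definition~\ref{D:ungraded admissible endomorphism} holds with $d=1$ and essential support $K$. Thus each $\Psi_t$ is admissible for $(\Sigma_1,D_1)$. Since $\{\Psi_t\}$ is uniformly bounded, the domain $\Dom(B_{\Psi_t})=\Dom(\overline{D_1})$ is independent of $t$, and $t\mapsto B_{\Psi_t}$ is a norm-continuous family of $\tau$-Fredholm operators. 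By the stability of the $\tau$-index under bounded perturbations (\cite[Lemma~7.3]{BrCe15}), the map $t\mapsto\ind_\tau B_{\Psi_t}$ is locally constant, hence constant, on $[0,1]$. In particular $\ind_\tau B_{\Phi_1}=\ind_\tau B_{-\id_{\Sigma_1}}$.

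To finish, I would compute $\ind_\tau B_{-\id_{\Sigma_1}}$ directly from spectral theory. The components of this Callias operator are $B_{-\id_{\Sigma_1}}^\pm = D_1\mp i$. Since $D_1$ is essentially self-adjoint on $L^2(M_1,\Sigma_1)$ by \cite[Theorem~2.3]{BrCe15}, the spectrum of its closure $\overline{D_1}$ is real, so $\pm i\notin\spec(\overline{D_1})$ and $\ker(\overline{D_1}\mp i)=0$. Hence $\dim_\tau\ker B_{-\id_{\Sigma_1}}^\pm=0$ and $\ind_\tau B_{-\id_{\Sigma_1}}=0$, which together with the homotopy step gives $\ind_\tau B_{\Phi_1}=0$. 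The step that most requires care is the homotopy invariance: one must verify that $\{B_{\Psi_t}\}$ remains uniformly $\tau$-Fredholm in the topology relevant to \cite[Lemma~7.3]{BrCe15}; this is guaranteed by the uniform admissibility of $\{\Psi_t\}$ on the common essential support $K$, together with the uniform lower bound $\Psi_t^2\geq 1$ outside $K$ that accompanies it.
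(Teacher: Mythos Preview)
Your proof is correct and follows essentially the same approach as the paper: compare $\Phi_1$ with the constant potential $-\id_{\Sigma_1}$ (which agrees with $\Phi_1$ outside a compact set) and observe that $B_{-\id_{\Sigma_1}}$ has trivial kernel. The only minor difference is that the paper invokes \cite[Theorem~2.21]{BrCe15} directly (stability under compactly supported perturbations) rather than running an explicit homotopy through \cite[Lemma~7.3]{BrCe15}, and it phrases the vanishing of the kernel as $B_{-\id}^2=D_1^2+1>0$ rather than $\pm i\notin\spec(\overline{D_1})$; these are equivalent.
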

\begin{proof}
Consider a new potential $\Phi_1'\equiv -\Id$ on $\Sigma_1$ and denote by $B_{\Phi_1'}$ the corresponding Callias-type operator. Notice, that the bundle maps $\Phi_1$ and $\Phi_1'$ coincide outside of the compact set 
\[
	\big(N\times[-1,1]\big)\cup_N(-M_-).
\]
It follows now from Theorem~2.21 of \cite{BrCe15} that 
\begin{equation}\label{E:indPhi=indPhi'}
	\ind_\tau B_{\Phi_1}\ = \ \ind_\tau B_{\Phi_1'}.
\end{equation}
Since 
\[
	B_{\Phi_1'}^2\ =\ 
		  \begin{pmatrix}
	  		D_1^2+1&0\\ 0&D_1^2+1
	  \end{pmatrix} > \ 0, 
\]
we conclude that $\ind_\tau{}B_{\Phi_1'}=0$. The lemma follows now from \eqref{E:indPhi=indPhi'}.
\end{proof}

\subsection{Proof of Proposition~\ref{P:reduction to the cylinder in the case of cylindrical end}}
By  Lemmas~\ref{L:reduction to the cylinder in the case of cylindrical end1} and \ref{L:reduction to the cylinder in the case of cylindrical end2}  it is enough to prove the proposition for the case when  $\nabla^{\Sigma_N}= \nabla^{\Sigma_{N+}}\oplus\nabla^{\Sigma_{N-}}$ and  $\Phi_N$ is the grading operator on $\Sigma_N$.

Let $M_1$, $\Sigma_1$, and $\Phi_1$ be as in the previous section. As we already noted, the restrictions of $\Phi$ and $\Phi_1$ to $N\times(1-\epsilon,1+\epsilon)$ coincide. Thus we can apply the relative index theorem~\ref{T:relative index theorem in von neumann setting} to the pair of manifolds $M$ and $M_1$. As a result of the cutting and pasting procedure used in this theorem we obtain two new manifolds
\begin{equation}\label{E:M2M3}
	M_2\ :=\ N\times \RR,\qquad M_3\ :=\ M_-\cup_N (-M_-).
\end{equation}
Let $B_2$ and $B_3$ be the operators on $M_2$ and $M_3$ obtained from $B_{\Phi}$ and $B_{\Phi_1}$ by cutting and pasting. It follows immediately from our assumptions that $\Phi$ is the grading operator and from the  construction of $\Phi_1$ that the restriction of $B_2$ to $\widehat{\Sigma}_{N+}\oplus\widehat{\Sigma}_{N+}$ is equal to the model operator $\mathbf{M}$, while the restriction of $B_2$ to $\widehat{\Sigma}_{N-}\oplus\widehat{\Sigma}_{N-}$ is equal the Callias-type operator associated with the ungraded triple $(\widehat{\Sigma}_{N-},\widehat{D}_{N-},-1)$. In other words, 
\[
	B_2\big|_{\widehat{\Sigma}_{N-}\oplus\widehat{\Sigma}_{N-}}\ = \
	  \begin{pmatrix}
	  		0&\widehat{D}_{N-}+i\\ \widehat{D}_{N-}-i&0
	  \end{pmatrix}.
\]
Hence, 
\[
	B_2^2
\big|_{\widehat{\Sigma}_{N-}\oplus\widehat{\Sigma}_{N-}}\ = \
	  \begin{pmatrix}
	  		\widehat{D}_{N-}^2+1&0\\ 0&\widehat{D}_{N-}^2+1
	  \end{pmatrix} \ >\ 0.
\]
We conclude that $\ind_\tau B_2\big|_{\widehat{\Sigma}_{N-}\oplus\widehat{\Sigma}_{N-}}=0$ and 
\[
	\ind_\tau B_2\ = \ \ind_\tau \mathbf{M}.
\]

Furthermore, $B_3$ is a Callias-type operator on {\em compact manifold} $M_3$. Hence, $\ind_\tau{}B_3=0$ by Proposition~\ref{P:compact index}. Applying the relative index theorem and using Lemma~\ref{L:indPhi1=0}, we now obtain
\[
	\ind_\tau B_{\Phi}\ = \ \ind_\tau B_{\Phi}\ + \ \ind_\tau B_{\Phi_1}
	\ = \ \ind_\tau B_2\ + \ \ind_\tau B_3 \ = \ \ind_\tau \mathbf{M}.
\]
Proposition~\ref{P:reduction to the cylinder in the case of cylindrical end} follows now from Proposition~\ref{P:separation of variables in the von Neumann setting}. \hfill$\square$

\medskip
We now pass to the proofs of Lemmas~\ref{L:reduction to the cylinder in the case of cylindrical end1} and \ref{L:reduction to the cylinder in the case of cylindrical end2}.


\subsection{A rescaling of the potential}
Fix $\lambda\geq 1$.
If the endomorphism  $\Phi$ is admissible, then also $\lambda \Phi$ is admissible with the same essential support.
Denote by $B_{\lambda\Phi}$ the Callias-type operator associated with $(\Sigma, D, \lambda\Phi)$.
The hypotheses of Proposition~\ref{P:reduction to the cylinder in the case of cylindrical end} imply that $\Phi$ is constant in the axial direction over the cylindrical end.
Therefore, $\Phi$ is uniformly bounded and defines a bounded operator on $L^2(M,\Sigma)$.
It follows that $\{B_{t\Phi}\}_{1\leq t\leq \lambda}$ is a continuous homotopy of $\tau$-Fredholm operators with fixed domain.
By \cite[Lemma~7.3]{BrCe15}, we deduce 
\begin{equation}\label{E:ind B_Phi=ind B_Phi^lambda}
	\ind_\tau B_\Phi\ =\ \ind_\tau B_{\lambda\Phi}.
\end{equation}


\subsection{Proof of Lemma~\ref{L:reduction to the cylinder in the case of cylindrical end1}}
By the hypotheses of Proposition~\ref{P:reduction to the cylinder in the case of cylindrical end}, 
\[
	\Sigma\big|_{N\times (1-\epsilon,\infty)}\ =\ 
	\widehat{\Sigma}_{N+}\oplus\widehat{\Sigma}_{N-}.
\]
With respect to this decomposition, we have
\begin{equation}\label{E:reduction to the cylinder2}
    D\big|_{N\times (1-\epsilon,\infty)}\ =\
    \begin{pmatrix}
    		\widehat{D}_{N+}&
    		\widehat{\pi}_-\circ \widehat{D}_N\circ \widehat{\pi}_+
    		\\
		\widehat{\pi}_+\circ \widehat{D}_N\circ \widehat{\pi}_-& 
		\widehat{D}_{N-}
    \end{pmatrix},
\end{equation}
where $\widehat{\pi}_\pm$ are the projections onto $\widehat{\Sigma}_{N\pm}$.

Let $\widehat{\alpha}\in \End_A(\widehat{\Sigma}_N)$ be the grading operator, i.e. $\widehat{\alpha}\big|_{\widehat{\Sigma}_{N\pm}}= \pm1$. Then $\widehat{\pi}_\pm=\frac{1}{2}\left(I\pm\widehat{\alpha}\right)$ and 
\[
	\widehat{\pi}_\pm\circ \widehat{D}_N\circ \widehat{\pi}_\mp
	\ =\ 
	\pm\frac{1}{2}[\widehat{D}_N,\widehat{\alpha}]\circ \widehat{\pi}_\mp,
\] 
where $\widehat{\alpha}=\pm 1$ on $\widehat{\Sigma}_{N\pm}$. It follows from  \eqref{E:c(xi)Npm} that $\widehat{\alpha}$ commutes with the Clifford multiplication. Hence, the commutator $[\widehat{D}_N,\widehat{\alpha}]$ is a zero-order differential operator, i.e. a bundle map.  We conclude that  $\widehat{\pi}_\pm\circ \widehat{D}_N\circ \widehat{\pi}_\mp$ are  also bundle maps.

Pick a constant $\epsilon_1$ such that $0<\epsilon_1<\epsilon$ and let $\Psi$ be a self-adjoint endomorphism of $\Sigma$ such that $\Psi=0$ off $N\times (1-\epsilon,\infty)$ and 
\[
\Psi\big|_{N\times (1-\epsilon_1,\infty)}\ :=\
 \begin{pmatrix}
	0&\widehat{\pi}_-\circ \widehat{D}_N\circ \widehat{\pi}_+\\
	\widehat{\pi}_+\circ \widehat{D}_N\circ \widehat{\pi}_-& 0
 \end{pmatrix}.
\]
Since both $\Psi$ and $\Phi$ are uniformly bounded, the commutator $[\Psi,\Phi]$ is in $L^\infty(M,\End_A(\Sigma))$.
Since the restriction of $D$ to $N\times (1-\epsilon,\infty)$ is the lift of $D_N$, the commutator $[D,\Phi]$ is also in $L^\infty(M,\End_A(\Sigma))$.
Hence, we can choose constants $d>0$ and $\lambda\geq 1$ such that 
\begin{equation}\label{E:admissible lambda}
	\lambda^2\Phi^2(x)\ \geq\ d+ \lambda\left(\|[D,\Phi]\|_\infty+\|\Psi,\Phi\|_\infty\right),
	\qquad\qquad x\in N\times (1-\epsilon,\infty).
\end{equation}

Consider the family $D_t:= D-t\Psi$ of Dirac operators on $\Sigma$. We claim that if $\lambda$ satisfies \eqref{E:admissible lambda} and $t\in[0,1]$, then $\lambda\Phi$ is an admissible endomorphism for $(\Sigma,D_t)$. Indeed,  since
\[
	\left[D-t\Psi,\lambda\Phi\right]\ =\ 
	\lambda\left(\left[D,\Phi\right]-t\left[\Psi,\Phi\right]\right),
\]
by using \eqref{E:admissible lambda} we obtain
\begin{equation}\label{E:Phi is admissible for D-tPsi}
	\begin{aligned}
		\lambda^2\Phi^2(x)
		\ \geq&\quad  
		d+ \lambda\left(\|[D,\Phi]\|_\infty+\|\Psi,\Phi\|_\infty\right)
		\\ \geq&\quad
		d+\lambda\left(\|[D,\Phi]\|_\infty+t\|[\Psi,\Phi]\|_\infty\right)
		\\ \geq&\quad 
		d+\lambda\|[D,\Phi]-t[\Psi,\Phi]\|_\infty
		\\ \geq&\quad
		d+ \|[D-\lambda\Psi,\lambda\Phi](x)\|,
	\end{aligned}
\end{equation}
for every  $x\in N\times (1-\epsilon,\infty)$.

Let $B^t_{\lambda\Phi}$ denote the Callias-type operator associated with the triple $(\Sigma,D-t\Psi,\lambda{}\Phi)$.
Since $\Psi$ is uniformly bounded, the family of operators   
\[
	\big\{\,B^t_{\lambda\Phi}:\ t\in[0,1]\,\big\}
\] 
is a continuous homotopy of $\tau$-Fredholm operators with fixed domain.  By \cite[Lemma~7.3]{BrCe15}, the $\tau$-index of $B^t_{\lambda\Phi}$ is independent of $t$.  Using \eqref{E:ind B_Phi=ind B_Phi^lambda} we now deduce
\begin{equation}\label{E:ind B_3=ind D_3'+Psi}	
	\ind_\tau B_\Phi\ =\ \ind_\tau B_{\lambda\Phi}
	\ =\ \ind_\tau B^0_{\lambda\Phi}\ =\ \ind_\tau B^1_{\lambda\Phi}.
\end{equation}

Let $\nabla^{\prime\Sigma}$ be a connection on $\Sigma$ satisfying condition (i) of the lemma. In other words we assume that the restriction of $\nabla^{\prime\Sigma}$ to $N\times(1-\epsilon,\infty)$ is equal to  $\nabla^{\hSigma_{N+}}\oplus\nabla^{\hSigma_{N-}}$. Let $D'$ be the Dirac operator associated with the connection $\nabla^{\prime\Sigma}$ and the zero potential. Let $B'_{\lambda\Phi}$ denote the Callias-type operator associated with the triple $(\Sigma,D',\lambda\Phi)$. Then the restrictions of the operators $B'_{\lambda\Phi}$ and $B^1_{\lambda\Phi}$ to the  cylindrical part $N\times(1-\epsilon,\infty)$ coincide.  Hence, $\ind_\tau{}B'_{\lambda\Phi}\ = \ind_\tau{}B^1_{\lambda\Phi}$ by Theorem~2.21 of \cite{BrCe15}. The lemma follows now from \eqref{E:ind B_3=ind D_3'+Psi}.
\hfill$\square$


\subsection{Proof of Lemma~\ref{L:reduction to the cylinder in the case of cylindrical end2}}
By \eqref{E:deformation of connection}  
\[
	D\big|_{N\times (1-\epsilon,\infty)}
	\ =\ \widehat{D}_{N+}\oplus\widehat{D}_{N-},
\]
where $\D_{N\pm}$ is the Dirac operator on $\hSigma_{N\pm}$ associated with the connection $\nabla^{\hSigma_{N\pm}}$ and the zero potential. 

Pick constants $\epsilon_1$, $\epsilon_2$ such that $0<\epsilon_1<\epsilon_2<\epsilon$.
Let $f:(1-\epsilon,\infty)\rightarrow [0,1]$ be a smooth function such that $f=0$ on $(1-\epsilon,1-\epsilon_2)$ and $f=1$ on $(1-\epsilon_1,\infty)$.
Let  $\Phi'$ be a self-adjoint endomorphism of $\Sigma$ such that  $\Phi'=0$ off $N\times (1-\epsilon,\infty)$ and
\[
	\Phi'(y,r)\ =\
			\begin{pmatrix}
				f(r)&0\\0&-f(r)
			\end{pmatrix},
			\qquad\text{for}\quad (y,r)\in N\times (1-\epsilon,\infty).
\]
Observe that $[D,\Phi']=0$ outside of $N\times (1-\epsilon,\infty)$ and
\[
	[D,\Phi']\big|_{N\times (1-\epsilon,\infty)}\ =\
	\left[\,
	 \begin{pmatrix}
	   \widehat{D}_{N+}&0\\0&\widehat{D}_{N-}
	 \end{pmatrix},\,
     \begin{pmatrix}
       f&0\\0&-f
     \end{pmatrix}\,\right]
	\ =\
     \begin{pmatrix}
        [\widehat{D}_{N+},f]&0\\
        0&[f,\widehat{D}_{N-}]
     \end{pmatrix}.
\] 
Hence, $[D,\Phi']$ is a zero-order differential operator. Thus $\Phi'$ satisfies Condition~(i) of Definition~\ref{D:ungraded admissible endomorphism}.

Furthermore, 
\begin{equation}\label{E:[D,G]=0}
	\Phi'\big|_{N\times (1-\epsilon_1,\infty)}\ =\
	\begin{pmatrix}
		1&0\\0&-1
	\end{pmatrix},\qquad
	\left[D,\Phi'\right]\big|_{N\times (1-\epsilon_1,\infty)}\ =\ 0.
\end{equation}
Thus Condition~(ii) of Definition~\ref{D:ungraded admissible endomorphism} is also satisfied. We conclude that  $\Phi'$ is an admissible endomorphism for $(\Sigma, D)$ with essential support $M\setminus\left(N\times (1-\epsilon,\infty)\right)$.

To prove the lemma it remains to show that
\begin{equation}\label{E:unitary potential}
	\ind_\tau B_{\Phi'}\ =\ \ind_\tau B_\Phi,
\end{equation}
where $B_{\Phi'}$ is the Callias-type operator associated with $(\Sigma, D, \Phi')$. We prove this equality by considering the homotopy 
\begin{equation}\label{E:Phi_t}
	\Phi_t\ :=\ t\,\Phi+(1-t)\,\Phi', \qquad 0\le t\le 1.
\end{equation}
A subtlety  here is that in general the endomorphism $\Phi_t$ is not admissible.  However, we show below that there exists $\lambda\ge 1$ such that the endomorphism $\lambda\Phi_t$ is admissible for all $t\in[0,1]$. 

First, recall that in Proposition~\ref{P:reduction to the cylinder in the case of cylindrical end} we assumed that  
\[
	\Phi\big|_{N\times (1-\epsilon,\infty)}
	\ =\ 
	\widehat{\Phi}_{N+}\oplus\widehat{\Phi}_{N-}, 
\]
where $\widehat{\Phi}_{N+}= \widehat{\Phi}\big|_{\widehat{\Sigma}_{N+}}$ is a strictly positive operator and $\widehat{\Phi}_{N-}= \widehat{\Phi}\big|_{\widehat{\Sigma}_{N-}}$ is a strictly negative operator. Since $\widehat{\Phi}_{N\pm}(x)$ are constant along the axial direction on the cylinder,  there exists a constant $d_1>0$ such that $\widehat{\Phi}_{N+}\geq d_1$ and $\widehat{\Phi}_{N-}\leq -d_1$. 
It follows that
\begin{equation}\label{E:Phi^2>d_1^2}
	\Phi^2\big|_{N\times (1-\epsilon,\infty)}=
	\begin{pmatrix}
		\widehat{\Phi}_{N+}^2&0\\0&\widehat{\Phi}_{N-}^2
	\end{pmatrix}		
	\\ \geq\ d_1^2.
\end{equation}
Using \eqref{E:[D,G]=0} we also obtain
\begin{equation}\label{E:{Phi,G}}
	\big\{\Phi,\Phi'\big\}\big|_{N\times (1-\epsilon_1,\infty)}\ := \ 	
	\big(\, \Phi\circ \Phi' + \Phi'\circ\Phi\,\big)
	\big|_{N\times (1-\epsilon_1,\infty)}
	\ =\
	2\left(\begin{array}{cc}\widehat{\Phi}_{N+}&0\\0&-\widehat{\Phi}_{N-}\end{array}\right)\ \geq\ 2\,d_1.
\end{equation}

Since $\Phi'$ is constant in the axial direction over the cylindrical end, the commutator $[D,\Phi']$ is in $L^\infty(M,\End_A(\Sigma))$.
Set 
\[	
	d_2\ :=\ \min (\{d_1,d_1^2, 1\})
\] 
and pick constants $d>0$, $\lambda\geq 1$ such that
\begin{equation}\label{E:choice of d and lambda}
	\lambda^2 d_2\ \geq\ d+\lambda \|[D,\Phi']\|_\infty.
\end{equation}
We claim that $\lambda\Phi_t$ is an admissible endomorphism for all $t\in[0,1]$.  Indeed, using \eqref{E:[D,G]=0}, \eqref{E:{Phi,G}}, \eqref{E:Phi^2>d_1^2}, and the equality 
\[
	\left[D,\lambda\Phi_t\right]
	\ =\ 
	\lambda\,\big(\, t[D,\Phi]+(1-t)[D,\Phi']\,\big),
\]
we obtain
\[
\begin{aligned}
	\left(\lambda\Phi_t\right)^2(x)\ = &\ t^2\lambda^2\Phi^2(x)+(1-t)^2 \lambda^2\Phi'^2(x) + 2t(1-t)\lambda^2\{\Phi, \Phi'\}(x)\\
	\geq &\ t^2\lambda^2d_1^2+(1-t)^2 \lambda^2 + 2t(1-t)\lambda^2d_1\\
	\geq &\ \left(t^2 +(1-t)^2+ 2t(1-t)\right)\,\lambda^2\,d_2\\
	\geq &\ d+\lambda \|[D,\Phi']\|_\infty\
	\geq \ d+\lambda \|[D,\lambda \Phi_t](x)\|,
\end{aligned}
\]
for every $x\in N\times(1-\epsilon_1,\infty)$.

Let $B_{\lambda\Phi_t}$ denote the Callias-type operator associated with $(\Sigma, D, \lambda\Phi_t)$.
Since both endomorphisms, $\Phi$ and $\Phi'$, are constant in the axial direction,  the family of  endomorphisms $\Phi_t$ is uniformly bounded and depends continuously on $t$. 
By \cite[Lemma~7.3]{BrCe15}, the index $\ind_\tau{}B_{\lambda\Phi_t}$ is independent of $t\in[0,1]$. Using \eqref{E:ind B_Phi=ind B_Phi^lambda} we now obtain
\[
	\ind_\tau B_{\Phi'}\ = \ \ind_\tau B_{\lambda\Phi'}\ =\ \ind_\tau B_{\lambda\Phi}\ = \
	\ind_\tau B_\Phi.
\]
\hfill$\square$


\section{Proof of the Callias-type theorem in general case}\label{S:general case}

In this section we conclude the proof of Theorem~\ref{T:computation of odd-dimensional case}. We use the relative index theorem to reduce the computation of the index in general case to a computation on a manifold with cylindrical ends.

\subsection{Notation}\label{SS:general case notation}
Throughout the section we use the notation of Section~\ref{SS:Callias-type theorem}. In particular, $M= M_-\cup_NM_+$, $\Sigma$ is a Dirac bundle over $M$, and $D$ is a Dirac operator on $\Sigma$ associated to a connection $\nabla^\Sigma$ and a potential $V$. 

Let $\Sigma_N$ and $\Phi_N$ be the restrictions of $\Sigma$ and $\Phi$ to $N$. We denote by $\widehat{\Sigma}_N$ and $\widehat{\Phi}_N$ the lifts of $\Sigma_N$ and $\Phi_N$ to $N\times\RR$, cf. Section~\ref{SS:Dirac on cylinder}.

\subsection{Sketch of the proof of Theorem~\ref{T:computation of odd-dimensional case}}\label{SS:Sketch Callias index}
We first deform all the structures in a small neighborhood $U(N)\subset M$ of $N$ so that the following conditions hold:
\begin{itemize}
 \item[\textbf{(N.1)}] 
 $U(N)$ is isometrically diffeomorphic to $N\times(-3\epsilon,3\epsilon)$;
 
\item[\textbf{(N.2)}]
the restrictions of  $\Sigma$   and  $\widehat{\Sigma}_N$ to $N\times(-\epsilon,\epsilon)$ are isomorphic as $A$-Hilbert\ bundles with connections;

\item[\textbf{(N.3)}]  
 $\Phi\big|_{N\times(-\epsilon,\epsilon)}=\widehat{\Phi}_N\big|_{N\times(-\epsilon,\epsilon)}$;
  
\item[\textbf{(N.4)}]
 the potential $V$ vanishes on $N\times(-\epsilon,\epsilon)$;
 
 \item[\textbf{(N.5)}]
 the essential support of the Callias-type operator associated to the new structures is still contained in the interior of $M_-$. 
\end{itemize}
By \cite[Lemma~7.3]{BrCe15}, the $\tau$-index of a Callias-type operator does not change under such deformation of the data. Hence, it suffices to prove  Theorem~\ref{T:computation of odd-dimensional case} in the case when the conditions \textbf{(N.1)--(N.5)} are satisfied, which we shall henceforth assume. 

The rest of the proof is very similar to the proof of Proposition~\ref{P:reduction to the cylinder in the case of cylindrical end}. In Section~\ref{S:cylindrical end} we used the relative index theorem to reduce the computation of the index on a manifold with cylindrical ends to the computation of an index  on a cylinder. Here in exactly the same way  we use the relative index theorem to reduce the computation  of the index on $M$ to a computation of an index on a manifold with cylindrical ends. 

The rest of this section is occupied with the details of the proof of Theorem~\ref{T:computation of odd-dimensional case}.

\subsection{Deformation of the metric}\label{SS:deformation metric}
Before presenting the  construction of the deformation of the data near $N$, discussed in the beginning of Section~\ref{SS:Sketch Callias index}, let us recall how a Dirac-type operator changes under a conformal change of the Riemannian metric.  

Let $g^M$ denote the Riemannian metric on $M$. Let $c:T^*M\to \End_A(\Sigma)$, $\nabla^\Sigma$, and $\<\cdot,\cdot\>$  denote respectively the Clifford action of the cotangent bundle, the metric connection, and  the inner product on fibers of $\Sigma$.

Let $h:M\to \RR$ be a smooth compactly supported  function and define a new Riemannian metric $g_h^M:= e^{-2h}g^M$. For a cotangent vector $\xi$ we denote by $|\xi|$ and $|\xi|_h$ its norms with respect to the metrics $g^M$ and $g^M_h$ respectively. Then $|\cdot|_h = e^h|\cdot|$.

In order to make $\Sigma$ a Dirac bundle over the Riemannian manifold $(M,g^M_h)$  we also need to change the other structures. The new Clifford action $c_h:T^*M\to \End_A(\Sigma)$ is given by
\begin{equation}	\label{E:Sigmah}
 	c_h(\xi)\ := \ e^hc(\xi), \qquad \xi\in  T^*M.
\end{equation}
The formula for the new connection $\nabla_h^\Sigma$ is more involved, cf. formula (17) of \cite{Nistor99}. This formula is forced by the assumption that the connection satisfies the Leibniz rule with respect to the Clifford action.  We denote by $\Sigma_h$ the Dirac bundle over $(M,g^M_h)$ endowed with the Clifford action $c_h$ and the Clifford connection $\nabla^\Sigma_h$ (the Hermitian metric on $\Sigma_h$ is the same as on $\Sigma$). 

Let $D'$ denote the Dirac operator on $\Sigma$ associated with the connection $\nabla^\Sigma$ and the zero potential. We recall from Section~\ref{SS:Callias index} that $D-D'=V\in \End_A(\Sigma)$. Let $D'_h$ denote the Dirac operator on $\Sigma_h$ associated with the connection $\nabla^{\Sigma}_h$ and the zero potential. Then
\begin{equation}\label{E:D'h=}\notag
 	D'_h\ = \ e^{h/2}\left(\,D'\ -\ \frac{n}2\,c(dh)\,\right)\,e^{h/2}
 	\ = \ 
 	e^h\,\left(\, D'\ - \ \frac{n-1}{2}\,c(dh)\,\right).
\end{equation}
This formula was obtained by Hitchin \cite[\S1.4]{Hitchin74} (see also \cite[[\S{}II.5]{lawson1989spin}) for the case when $\Sigma$ is the bundle of spinors. The general case was treated in \cite[\S4]{Nistor99}.

We now set
\begin{equation}\label{E:Dh=}
 	D_h\ := 
 	e^h\,\left(\, D\ - \ \frac{n-1}{2}\,c(dh)\,\right).
\end{equation}
Then $D_h$ is the Dirac operator associated with connection $\nabla^\Sigma_h$ and potential $e^hV$.

More generally, let $\chi:M\to [0,1]$ be a smooth function, such that  $\chi(x)\equiv 1$ for all $x\in M\setminus{}K$, where $K$ is an essential support of $\Phi$. We consider the Dirac operator $D_{h,\chi}$ associated with the connection $\nabla^\Sigma_h$ and the potential $\chi{}e^hV$. Then 
\begin{equation}\label{E:Dhchi}
		D_{h,\chi}\ = \ D_h\ -\ (1-\chi)e^hV.
\end{equation}
 
Notice, that since $h$ and $1-\chi$ have compact support, any endomorphism $\Phi:\Sigma\to \Sigma$ which is admissible for $D$ is also admissible for $D_{h,\chi}$. We denote by $B_{\Phi,h,\chi}$ the Callias-type operator associated with the triple $(\Sigma_h,D_{h,\chi},\Phi)$. Since $B_\Phi$ and $B_{\Phi,h,\chi}$ coincide outside of a compact set, 
\begin{equation}\label{E:BPhi=BPhih}
	\ind_\tau B_{\Phi,h,\chi}\ = \ \ind_\tau B_\Phi
\end{equation}
by Theorem~2.21 of \cite{BrCe15}.

\begin{lemma}\label{L:conformal change}
Let $B_\Phi$ be a Callias-type operator associated with a triple $(\Sigma,D,\Phi)$. Assume that $K$ is an essential support of $\Phi$ with respect to the pair $(\Sigma,D)$ and that there exists a constant $d_1>0$ and a compact set $K_1\subset K$ such that $\Phi^2(x)>d_1$ for all $x\not\in K_1$. Then there exists a function $h\in C^\infty_0(M)$ such that $K_1$ is an essential support for $\Phi$ with respect to the pair $(\Sigma_h,D_{h,\chi})$.
\end{lemma}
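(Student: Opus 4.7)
The admissibility of $\Phi$ with respect to $(\Sigma_h, D_{h,\chi})$ requires two conditions from Definition~\ref{D:ungraded admissible endomorphism}, and I plan to verify them separately. First, using $D_{h,\chi}= e^h D - \tfrac{n-1}{2}e^h c(dh) - (1-\chi)e^h V$ from \eqref{E:Dh=} and \eqref{E:Dhchi}, together with the fact that $e^h$ and $\chi$ are scalar functions and $[c(dh),\Phi]=0$ by \eqref{E:[Phi,c]}, I will compute
\[
[D_{h,\chi},\Phi]\ =\ e^h[D,\Phi]\ -\ (1-\chi)\,e^h\,[V,\Phi],
\]
which is a bundle map since both $[D,\Phi]$ (admissibility of $\Phi$ for $(\Sigma,D)$) and $[V,\Phi]$ (both $V$ and $\Phi$ are bundle maps) are. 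This establishes condition (i) for \emph{any} smooth compactly supported $h$.

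For condition (ii), I plan to construct $h$ by bounding the commutator pointwise in two regions. On $M\setminus K$, $\chi\equiv 1$, so $[D_{h,\chi},\Phi]=e^h[D,\Phi]$. The admissibility of $\Phi$ for $(\Sigma,D)$ supplies $d_0>0$ with $\Phi^2(x)\geq d_0+\|[D,\Phi](x)\|$ on $M\setminus K$. Hence requiring $h\leq 0$ everywhere gives $e^{h(x)}\leq 1$ and therefore
\[
\Phi^2(x)\ \geq\ d_0\,+\,\|[D_{h,\chi},\Phi](x)\|,\qquad x\in M\setminus K.
\]
On the compact set $\overline{K\setminus K_1}$, the pointwise operator norms of the bundle maps $[D,\Phi]$ and $[V,\Phi]$ are bounded by some constant $C>0$. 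I then choose $h\in C^\infty_0(M)$ with $h\leq 0$ everywhere and $h(x)\leq \log\!\bigl(d_1/(4C)\bigr)$ on $\overline{K\setminus K_1}$; such an $h$ exists by a standard bump function construction. For $x\in \overline{K\setminus K_1}$ this gives
\[
\|[D_{h,\chi},\Phi](x)\|\ \leq\ e^{h(x)}\bigl(\|[D,\Phi](x)\|+\|[V,\Phi](x)\|\bigr)\ \leq\ d_1/2,
\]
while $\Phi^2(x)\geq d_1$ by hypothesis, so $\Phi^2(x)\geq d_1/4+\|[D_{h,\chi},\Phi](x)\|$. Setting $d:=\min(d_0,d_1/4)>0$ yields the required inequality for all $x\notin K_1$, so $K_1$ is an essential support of $\Phi$ with respect to $(\Sigma_h,D_{h,\chi})$.

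\textbf{Main point to check.} The only non-routine step is the algebraic identity for $[D_{h,\chi},\Phi]$: it relies crucially on \eqref{E:[Phi,c]} to eliminate the $c(dh)$ contribution, and it is essential that $h$ appears only as a scalar factor (so that both $e^h$ and $1-\chi$ commute with $\Phi$). Once this identity is in hand, the analytic core of the lemma is simply the observation that multiplying the commutator by $e^h$ with $h\ll 0$ on the compact collar $\overline{K\setminus K_1}$ can make it arbitrarily small relative to the pointwise lower bound $d_1$ for $\Phi^2$, while the compact support of $h$ preserves admissibility on $M\setminus K$ via the bound $e^h\leq 1$.
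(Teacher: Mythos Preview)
Your proof is correct and follows essentially the same approach as the paper: derive the commutator identity $[D_{h,\chi},\Phi]=e^h[D,\Phi]-(1-\chi)e^h[V,\Phi]$ from \eqref{E:[Phi,c]}, then choose $h\le 0$ compactly supported and sufficiently negative on a compact set (all of $K$ in the paper, $\overline{K\setminus K_1}$ in your version) so that the commutator is dominated by $d_1/2$ there, while $e^h\le 1$ preserves the original admissibility estimate on $M\setminus K$. The only differences are cosmetic (the paper takes $d_2=\min\{d_1/2,d\}$ rather than your $\min\{d_0,d_1/4\}$, and bounds $\|[D,\Phi]-(1-\chi)[V,\Phi]\|$ directly rather than via the triangle inequality).
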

Together with \eqref{E:BPhi=BPhih} the lemma suggests that for all index related questions we could define the essential support as the set $K_1$ such that $\Phi^2(x)>d_1$ for all $x\not\in K_1$. 

\begin{proof}
Let $h:M\to (-\infty,0]$ be a smooth compactly supported non-positive function such that 
\[
	e^{h(x)}\, \left\|\,[D,\Phi](x)-\big(1-\chi(x)\big)\,[V,\Phi](x)\,\right\| 
	\ < \ \frac{d_1}{2}, 
	\qquad \text{for all}\quad	x\in K.
\]
By \eqref{E:[Phi,c]} $\Phi$ commutes with the Clifford multiplication. Hence, using \eqref{E:Dh=} and \eqref{E:Dhchi} we obtain 
\begin{equation}\label{E:[Dhchi,Phi]}
	[D_{h,\chi},\Phi](x)\ = \ 
	e^{h(x)}\,[D,\Phi](x)\ - \ \big(1-\chi(x)\big)\,e^{h(x)}\, [V,\Phi](x),
	\qquad x\in M,
\end{equation}
and
\[
	\|[D_{h,\chi},\Phi](x)\|\ = \ 
	e^{h(x)}\, \left\|\,[D,\Phi](x)-\big(1-\chi(x)\big)\,[V,\Phi](x)\,\right\|
	\ <\ \frac{d_1}{2}.
\]
Therefore, 
\begin{equation}\label{E:Phi2>notinK1}
	\Phi(x)^2\ > \ d_1\ > \ \frac{d_1}2\ +\  \|[D_{h,\chi},\Phi](x)\|,
	\qquad\text{for all}\quad x\in K\setminus K_1.
\end{equation}

Since $e^h\le 1$ and  $1-\chi(x)=0$ for $x\not\in K$, we conclude from \eqref{E:[Dhchi,Phi]} that 
\[
	\|[D_{h,\chi},\Phi](x)\| \ \le\ \|[D,\Phi](x)\|, \qquad\text{for all}
	\quad x\not\in K.
\]
Hence, by part (ii) of Definition~\ref{D:ungraded admissible endomorphism}, there exists $d>0$ such that
\begin{equation}\label{E:Phi2>notinK}
	\Phi(x)^2\ >  \ 
	d\ +\  \|[D_{h,\chi},\Phi](x)\|, \qquad\text{for all}
	\quad x\not\in K.
\end{equation}
Set $d_2:= \min\{\frac{d_1}2,d\}$. Then combining \eqref{E:Phi2>notinK1} and \eqref{E:Phi2>notinK} we conclude that 
\[
	\Phi(x)^2\ >  \ 
	d_2\ +\  \|[D_{h,\chi},\Phi](x)\|, \qquad\text{for all}\quad x\not\in K_1.
\]
\end{proof}

\subsection{Deformation of the data in a neighborhood of $N$}\label{SS:product on collar}
By \cite[Chapter~9]{BooWoj93}, we can deform the Riemannian metric on $M$, the Clifford action of $T^*M$ on $\Sigma$ and the connection on $\Sigma$ in a small neighborhood $U(N)\subset M$ of $N$ such that Conditions \textbf{(N.1)--(N.3)} of Section~\ref{SS:Sketch Callias index} are satisfied and $\Phi(x)^2>0$ for all $x\in (N\times(-\epsilon,0])\cup_NM_+$. 

Let  $\chi:M\to [0,1]$  be a smooth function such that $\chi(x)=0$ for all $x\in N\times(-\epsilon,\epsilon)\subset U(N)$ and  $\chi(x)=1$ for all $x\not\in N\times(-2\epsilon,2\epsilon)$. We replace the potential $V$ with $\chi{}V$. Then Condition \textbf{(N.4)} of Section~\ref{SS:Sketch Callias index} is also satisfied.

It follows now from Lemma~\ref{L:conformal change} that we can deform the structures in a small neighborhood of $N$ so that with respect to the new structures the essential support of $\Phi$ is contained in the interior of $M_-\backslash(N\times(-\epsilon,0])$. Then Condition \textbf{(N.5)} of Section~\ref{SS:Sketch Callias index} is satisfied.

Since all our changes occurred only in a  relatively  compact neighborhood of $N$, it follows from  Theorem~2.21 of \cite{BrCe15} that they don't change the index of the associated Callias-type operator. Hence, it is enough to prove Theorem~\ref{T:computation of odd-dimensional case} for the case when Conditions  \textbf{(N.1)--(N.5)} of Section~\ref{SS:Sketch Callias index} are satisfied, which we will henceforth assume.


\subsection{Proof of Theorem~\ref{T:computation of odd-dimensional case}}
Let $M_1:=N\times\RR$ be the cylinder, and let $B_1= B_{\widehat{\Phi}_N}$ be the Callias-type operator on $M_1$ associated with the triple $(\widehat{\Sigma}_N,\widehat{D}_N,\widehat{\Phi}_N)$. Since the essential support of $\widehat{\Phi}_N$ is empty, $B_1^2>0$. Hence, $\ind_\tau{}B_1=0$. 

As in Section~\ref{P:reduction to the cylinder in the case of cylindrical end} we are going to cut and paste manifolds $M$ and $M_1$ along $N$ and use the relative index theorem. Notice, that we can do it, because in Section~\ref{SS:product on collar} we deformed all the data on the collar  neighborhood of $N$ in  $M$ so that now it matches the data on the cylinder $M_1$.  

Applying the cut and paste procedure of Section~\ref{SS:A relative index theorem} to manifolds $M$ and $M_1$ and potentials $\Phi$ and $\widehat{\Phi}_N$ we obtain manifolds
\[
	M_2\ =\ 
	M_-\cup_N \left(N\times [0,\infty)\right)
	\qquad\text{and}\qquad 
	 M_3\ :=\ N\times(-\infty,0]\cup_N M_+,
\]
with potentials $\Phi_2$ and $\Phi_3$ respectively. Let $B_{\Phi_2}$ and $B_{\Phi_3}$ be the corresponding Callias-type operators. 

The restriction of $\Phi_2$ to the cylindrical part $N\times [0,\infty)$ is equal to $\widehat{\Phi}_N$. Similarly, the restriction of $\Phi_3$ to $N\times(-\infty,0]$ is equal to $\widehat{\Phi}_N$. Moreover, the essential support of $\Phi_3$ is empty. Hence, $\ind_\tau{}B_3= 0$. From Proposition~\ref{P:reduction to the cylinder in the case of cylindrical end} we obtain $\ind_\tau{}B_2= \ind_\tau{}D_{N+}$.  From the relative index theorem~\ref{T:relative index theorem in von neumann setting} we now obtain
\[
	\ind_\tau B_\Phi\ = \ \ind_\tau B_\Phi\ +\ \ind_\tau B_1\ = \
	\ind_\tau B_2\ +\ \ind_\tau B_3\ = \ \ind_\tau D_{N+}.
\]
\hfill$\square$


\section{Cobordism invariance of the $\tau$-index}\label{S:cobordism}

In this section we prove Theorem~\ref{T:cobordism invariance} about the cobordism invariance of the $\tau$-index of a Callias-type operator. As a first step we give a new proof  of the cobordism invariance of the index of Dirac operators on compact manifolds. 

In this section we freely use the notation introduced in Sections~\ref{SS:cobordism} and \ref{SS:cylindrical end}--\ref{SS:Dirac on cylinder}.

\subsection{Compact cobordisms}\label{SS:compact cobordisms}
Let $\Sigma$ be a Dirac $A$-bundle over a {\em compact} manifold $M$, and let $D$ be a Dirac operator on $\Sigma$. We say that $D$ is {\em compactly null-cobordant} if there exists a null-cobordism $(W,\oSigma,\oD)$ of $D$ with $W$ a compact manifold with boundary. 

\begin{proposition}\label{P:compact cobordism}
If $D$ is compactly null-cobordant Dirac operator, then $\ind_\tau{}D=0$. 
\end{proposition}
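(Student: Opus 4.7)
The plan is to reduce Proposition~\ref{P:compact cobordism} to the Callias-type theorem (Theorem~\ref{T:computation of odd-dimensional case}) by attaching an infinite cylindrical end to $W$ and introducing an admissible Callias-type potential concentrated on that end. First I would form the complete Riemannian manifold $W' := W \cup_M \bigl(M \times [0, \infty)\bigr)$ using the collar diffeomorphism~\eqref{E:diffeo}, and extend the cobordism data: by conditions (ii)--(iii) of Definition~\ref{D:cobordismSigma}, the bundle $\oSigma$ extends to a Dirac bundle $\Sigma'$ on $W'$ whose restriction to the cylinder is $\widehat{\Sigma} \oplus \widehat{\Sigma}$, and $\oD$ extends to a Dirac operator $\oD'$ coinciding with $\widehat{D}$ on the cylinder.

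Next I would introduce the endomorphism $\Gamma := \diag(\varepsilon, -\varepsilon) \in \End_A(\widehat{\Sigma} \oplus \widehat{\Sigma})$, where $\varepsilon$ is the $\ZZ_2$-grading of $\widehat{\Sigma}$. Since $D$ is graded-odd, $\{D, \varepsilon\} = 0$, and a direct block matrix computation with the formula for $\widehat{D}$ in Definition~\ref{D:cobordismSigma} then gives $[\widehat{D}, \Gamma] = 0$ and $\Gamma^2 = \Id$. Choose a smooth nondecreasing cut-off $f \colon [0, \infty) \to [0, 1]$ with $f \equiv 0$ on $[0, 1/2]$ and $f \equiv 1$ on $[1, \infty)$, and define $\Psi \equiv 0$ on $W$ and $\Psi(x, t) := f(t) \Gamma$ on $M \times [0, \infty)$. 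Then $\Psi$ is admissible for $(\Sigma', \oD')$ with essential support in the compact set $W \cup \bigl(M \times [0, 1]\bigr)$: outside this set $\Psi^2 = \Id$ and $[\oD', \Psi]$ vanishes. Applying Theorem~\ref{T:computation of odd-dimensional case} to the Callias-type operator $B_\Psi$ with hypersurface $N := M \times \{T\}$ for any fixed $T > 1$ will give $\ind_\tau B_\Psi = \ind_\tau D_{N+}$.

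The content of the argument lies in identifying $D_{N+}$ with $D$. The $+1$-eigenspace of $\Psi|_N = \Gamma$ is $\Sigma'_{N+} = \{(v_1, v_2) : \varepsilon v_1 = v_1, \varepsilon v_2 = -v_2\} \cong \Sigma^+|_N \oplus \Sigma^-|_N$. I expect that the map $(v_1, v_2) \mapsto v_1 + v_2$ defines an isomorphism $\Sigma'_{N+} \xrightarrow{\sim} \Sigma|_N$ of graded Dirac bundles: it should intertwine the Clifford actions of $T^*N$ (which acts on $\widehat{\Sigma} \oplus \widehat{\Sigma}$ by swapping the two copies via $c_M$), match the grading $\alpha = -ic(dt)$ on $\Sigma'_{N+}$ with the original grading $\varepsilon$ of $\Sigma$, and identify the corresponding connections (since the pullback connection on the cylinder preserves the eigenspace decomposition of the parallel endomorphism $\Gamma$). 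Under this identification $D_{N+}$ is intertwined with $D$, giving $\ind_\tau B_\Psi = \ind_\tau D_{N+} = \ind_\tau D$.

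Finally I would show $\ind_\tau B_\Psi = 0$ by homotoping $\Psi$ through admissible potentials to one whose Callias operator has trivially zero kernel. The key step is to extend $\Gamma$ to a smooth self-adjoint bundle endomorphism $\overline{\Gamma}$ of $\oSigma$ over $W$, matching $\Gamma$ on the collar and satisfying $\overline{\Gamma}^2 \geq c \cdot \Id$ for some $c > 0$. For sufficiently large $\lambda$, the rescaled potential $\Psi_\lambda := \lambda \overline{\Gamma}$ on $W$ glued to $\lambda \Gamma$ on the cylinder satisfies $B_{\Psi_\lambda}^2 \geq \oD'^2 + (\lambda^2 c - \lambda \|[\oD', \overline{\Gamma}]\|_\infty) \cdot \Id > 0$ uniformly on $W'$, forcing $\ker B_{\Psi_\lambda} = 0$ and $\ind_\tau B_{\Psi_\lambda} = 0$. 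The linear interpolation $\Psi_s := (1-s)\Psi + s\Psi_\lambda$ is continuous across $\partial W$ (both sides equal $s\lambda\Gamma$ there) and gives a continuous family of admissible potentials with essential support uniformly contained in $W \cup (M \times [0, 1])$, so by the stability of the $\tau$-index under admissible homotopies \cite[Lemma~7.3]{BrCe15}, $\ind_\tau D = \ind_\tau B_\Psi = \ind_\tau B_{\Psi_\lambda} = 0$. The main obstacle will be the construction of $\overline{\Gamma}$ with the required global non-degeneracy; this is a topological compatibility statement built into the cobordism data of Definition~\ref{D:cobordismSigma}.
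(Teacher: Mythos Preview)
Your overall plan---attach a half-infinite cylinder to $W$, extend the data, and invoke the Callias-type theorem on the resulting complete manifold $W'$---is exactly the paper's approach, and your identification of $D_{N+}$ with $D$ is correct in spirit. Where you diverge is in the final step, showing that the Callias index on $W'$ vanishes, and this is where there is a genuine gap.

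The paper's argument here is far simpler than yours. Take the potential to be the constant $\Phi' = \Id$ on \emph{all} of $W'$ (no cutoff, no grading operator $\Gamma$). Then $\Sigma'_{N+}$ is the entire restriction $\Sigma'|_N$, the induced boundary operator is $D$, and the Callias theorem gives $\ind_\tau B'_{\Id} = \ind_\tau D$. Now replace $\Id$ by $-\Id$: the positive eigenbundle $\Sigma'_{N+}$ is zero, so the Callias theorem gives $\ind_\tau B'_{-\Id} = 0$. The elementary identity $\ind_\tau B_{-\Phi} = -\ind_\tau B_{\Phi}$ of \eqref{E:Phi=-Phi}, valid for any ungraded Callias triple, then forces $\ind_\tau B'_{\Id} = -\ind_\tau B'_{-\Id} = 0$, hence $\ind_\tau D = 0$. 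No homotopy, no extension problem.

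Your homotopy argument, by contrast, hinges on extending $\Gamma$ from the collar to a globally nondegenerate self-adjoint endomorphism $\overline{\Gamma}$ of $\oSigma$ over $W$ that \emph{commutes with the Clifford action} (this last condition is forced: you need $[\oD',\overline{\Gamma}]$ to have order zero so that $\Psi_\lambda$ is admissible and $\|[\oD',\overline{\Gamma}]\|_\infty$ makes sense). You flag this as ``the main obstacle'' and assert it is ``built into the cobordism data of Definition~\ref{D:cobordismSigma}'', but it is not: that definition imposes no structure on $\oSigma$ away from the collar beyond being a Dirac bundle. The existence of such an extension amounts to splitting $\oSigma$ globally into two Clifford sub-bundles extending the $\pm 1$-eigenbundles of $\Gamma$, and in general there is a topological obstruction to this. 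The paper's use of the scalar potentials $\pm\Id$---which trivially extend over $W$ and trivially commute with everything---sidesteps the issue entirely.
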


\newcommand{\tW}{\widetilde{W}}
\newcommand{\tSigma}{\widetilde{\Sigma}}
\begin{proof}
Consider the manifold $W':= W\cup_M(M\times[0,\infty))$. Let $\Sigma'$ denote the Dirac bundle over $W'$, whose restriction to $W$ is equal to $\oSigma$ and whose restriction to the cylinder $M\times[0,\infty)$ is equal to the bundle $\widehat{\Sigma}$. Let $D'$ be the Dirac operator whose restriction to $W$ is equal to $\oD$ and whose restriction to $M\times[0,\infty)$ is equal to $\widehat{D}$.

For $\Phi'\in \End_A(\Sigma')$ we denote by  $B'_{\Phi'}$ be the Callias-type operator on $W'$ associated with the triple $(\Sigma',D',\Phi')$. 
Applying the Callias-type index theorem~\ref{T:computation of odd-dimensional case} to the operators $B'_{\Id}$ and $B'_{-\Id}$ we obtain
\[
	\ind_\tau B'_{\Id}\ = \ \ind_\tau D, \qquad 
	\ind_\tau B'_{-\Id}\ = \ 0.
\]
The proposition follows now from Corollary~\ref{C:Phi=-Phi}.
\end{proof}


\subsection{Proof of Theorem~\ref{T:cobordism invariance}}\label{SS:prcobordism invariance}
We now deduce the cobordism invariance of a Callias-type operator from Theorem~\ref{T:computation of odd-dimensional case} and Proposition~\ref{P:compact cobordism}. 

\newcommand{\oN}{\overline{N}}

Let $B_\Phi$ be the Callias-type operator associated with a triple $(\Sigma,D,\Phi)$ and let $(W,\oSigma,\oD,\oPhi)$ be a null-cobordism of $B_\Phi$. 
Choose an  open subset $\Omega\subset W$ {with compact closure} such that 
\begin{enumerate}
\item 
$\Omega$ contains the essential support of $\oPhi$; 
\item
the boundary $\overline{N}:=\partial{\Omega}$ of $\Omega$ is a smooth manifold which  intersects $M=\partial{}W$ transversely. 
\end{enumerate}
Then $\Omega\cap \partial{}W$ is an open subset of $M=\partial{}W$ which contains an essential support of $\Phi$. Furthermore
\[
	N\ :=\ \oN\cap \partial{}W\ = \ \partial\big(\,\Omega\cap M\,\big)
\]
is a smooth {compact} hypersurface in $M$. 

Let $\oSigma_{\oN}$ denote the restriction of $\oSigma$ to $\oN$. Then $\oSigma_{\oN}$ has a natural grading 
\begin{equation}\label{E:grading on oSigma}
	\oSigma_{\oN}\ = \ \oSigma_{\oN+}\oplus \oSigma_{\oN-},
\end{equation}
where the fiber of $\oSigma_{\oN+}$ (respectively $\oSigma_{\oN-}$) over $x\in \oN$ is the image of the spectral projection of $\oPhi\big|_{\oN}$ corresponding to the interval $(0,\infty)$ (respectively $(-\infty,0)$). We denote by $\oD_{\oN}$ the restriction of $\oD$ to $\oN$. It is a Dirac operator on $\oSigma_{\oN}$ which preserves the grading \eqref{E:grading on oSigma}. We denote by $\oD_{\oN\pm}$ the restriction of $\oD_{\oN}$ to $C^\infty_0(\oN,\oSigma_{\oN\pm})$.

Let $D_{N+}$ be the operator induced on $N$ by $B_\Phi$, cf. Section~\ref{SS:Callias-type theorem}. Then $(\oN,\oSigma_{\oN+},\oD_{\oN+})$ is a null-cobordism of $D_{N+}$. Hence, it follows from Proposition~\ref{P:compact cobordism}, that $\ind_\tau{}D_{N+}=0$.  We now use the Callias-type theorem~\ref{T:computation of odd-dimensional case} to obtain $\ind_\tau{}B_\phi= \ind_\tau{}D_{N+}= 0$.   \hfill$\square$


\section{The $\Gamma$-index Theorem}\label{S:Gamma index in odd dimension}

This section is devoted to the proof of the $\Gamma$-index theorem for Callias-type operators. In Section~\ref{SS:twisted Callias-type operators} we analyze Callias-type operators twisted by an $A$-Hilbert bundle of finite type.
In section~\ref{SS:Galois covers} we show that the  Callias-type operators lifted to Galois covers can be interpreted using the twisted construction and deduce from this Lemma~\ref{L:Gamma index of Callias-type operators} and Theorem~\ref{T:Gamma-index theorem in the odd-dimensional case}.


\subsection{Twisted Callias-type operators}\label{SS:twisted Callias-type operators}

Let $M$, $S$, $D$ and $\Phi$ be as in Section~\ref{SS:the Gamma-index theorem}. We denote by $\nabla^S$ the connection on $S$.
Recall from \eqref{E:definition of Dirac operator} that $D$ is  the Dirac operator on $S$ given by
\begin{equation}\label{E:Dirac operator}
		D\ = \ \sum_i\,c(e^i)\,\nabla^{S}_{e_i} \ + \ V,
\end{equation}
where $V\in \End_A(S)$ is a bundle map.
 
Suppose that $H$ is an $A$-Hilbert bundle of finite type endowed with a connection $\nabla^H$. Then the bundle $S\tensor H$ carries a Dirac $A$-Hilbert bundle structure with connection
\[
	\nabla^{S\otimes{}H}\ := \ 
	\nabla^S\tensor 1\ +\ 1\tensor\nabla^H.
\]
We define a {\em twisted Dirac operator} on $S\otimes{}H$ by 
\begin{equation}\label{E:twisted Dirac operator}
		D_H\ = \ \sum_i\,c(e^i)\,\nabla^{S\otimes H}_{e_i} \ + \ V\otimes 1.
\end{equation}

\begin{lemma}\label{L:PhiH is admisssible}
The endomorphism 
\[
	\Phi_H\ := \ \Phi\otimes 1\ \in \ \End_A\big(S\otimes H\big),
\]
is admissible for the pair $(S\otimes{}H,D_H)$.
\end{lemma}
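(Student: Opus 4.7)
The plan is to verify directly the two conditions (i) and (ii) of Definition~\ref{D:ungraded admissible endomorphism} for the pair $(S\otimes H, D_H)$ and the endomorphism $\Phi_H = \Phi\otimes 1$, starting from the fact that $\Phi$ is admissible for $(S, D)$ with essential support $K$ and constant $d>0$.

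First, I would show that $[D_H, \Phi_H]$ is an endomorphism of $S\otimes H$ by proving
\[
[D_H, \Phi_H] \ = \ [D, \Phi]\otimes 1_H.
\]
The Leibniz rule for the product connection $\nabla^{S\otimes H} = \nabla^S\otimes 1 + 1\otimes\nabla^H$ gives, on an elementary section $s\otimes h$,
\[
[\nabla^{S\otimes H}_{e_i}, \Phi\otimes 1](s\otimes h) \ = \ [\nabla^S_{e_i},\Phi](s)\otimes h,
\]
because the $1\otimes \nabla^H_{e_i}$-term commutes with $\Phi\otimes 1$. Summing against $c(e^i)$ and adding the bundle-map contribution $[V\otimes 1,\Phi\otimes 1] = [V,\Phi]\otimes 1$ yields the claimed identity. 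Since $[D,\Phi]$ is a bundle endomorphism by admissibility of $\Phi$, so is $[D_H,\Phi_H]$.

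Next, for condition (ii), I observe that $\Phi_H^2 = \Phi^2\otimes 1_H$ and that the pointwise operator norm on the finite type $A$-Hilbert module $S_x\otimes H_x$ satisfies
\[
\big\|\,[D,\Phi](x)\otimes 1_{H_x}\,\big\| \ = \ \big\|[D,\Phi](x)\big\|,
\]
a standard property of operators on tensor products of Hilbert $A$-modules of finite type. The operator inequality $\Phi^2(x)\ge d + \|[D,\Phi](x)\|$, valid for all $x\in M\setminus K$ by admissibility of $\Phi$, is preserved under tensoring with $\id_{H_x}$, so
\[
\Phi_H^2(x) \ = \ \Phi^2(x)\otimes 1_{H_x} \ \ge\ d + \|[D,\Phi](x)\| \ = \ d + \|[D_H, \Phi_H](x)\|
\]
for all $x\in M\setminus K$. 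Thus $K$ is an essential support of $\Phi_H$ with the same constant $d$.

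The computations are essentially formal; the only mildly delicate point is the norm identity $\|T\otimes 1_{H_x}\| = \|T\|$ for endomorphisms $T$ of a finitely generated projective $A$-module, which is where the finite type assumption on $H$ is used. Once this is invoked, both conditions follow immediately and no further analytic work is needed.
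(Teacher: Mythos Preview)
Your proof is correct and follows essentially the same route as the paper: both arguments establish the key identity $[D_H,\Phi_H]=[D,\Phi]\otimes 1$ and then read off admissibility. The paper makes explicit that the cross term $\sum_i [c(e^i),\Phi]\,s\otimes\nabla^H_{e_i}h$ vanishes because $\Phi$ commutes with the Clifford action (equation~\eqref{E:[Phi,c]}); in your write-up this same fact is needed when you ``sum against $c(e^i)$'' (otherwise a term $[c(e^i),\Phi]\otimes 1\cdot\nabla^{S\otimes H}_{e_i}$ survives), so you should cite it there. Your treatment of condition~(ii) is actually more explicit than the paper's, which simply asserts that admissibility follows.
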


\begin{proof}

Fix a trivializing neighborhood $U$ of $S\tensor H$ and local sections $s\in C^\infty(U,S|_U)$, $h\in C^\infty(U,H|_U)$. 
By (\ref{E:Dirac operator}) and (\ref{E:twisted Dirac operator}) we have
\[
	D_H(s\tensor h)
	\ =\ \left(Ds\right)\tensor h\ +\ 
	\sum_i\left(c(X^i) s\right)\tensor\left(\nabla_{X_i}^H h\right),
\]
where $\{X_i\}$ is a local orthonormal frame of $TM$ and $X^i$ is the dual frame of $T^*M$. It follows that
\begin{equation}\label{E:twisted endomorphism}
\left[D_H,\Phi_H\right](s\tensor h)=\left(\left[D,\Phi\right]s\right)\tensor h+\sum_i\left(\left[c(X_i),\Phi\right]s\right)\tensor\left(\nabla_{X_i}^H h\right).
\end{equation}
By \eqref{E:[Phi,c]}, the endomorphism  $\Phi$  commutes with Clifford multiplication.  Hence the second term on the right-hand side of \eqref{E:twisted endomorphism} vanishes. Therefore,
\[
	\left[D_H,\Phi_H\right]\ =\ \left[D,\Phi\right]\tensor 1.
\]
It follows that $\Phi_H$ is admissible whenever $\Phi$ is admissible.
\end{proof}

Let $B^H_\Phi$ denote the Callias-type operator associated with the triple $(S\otimes{}H, D_H, \Phi_H)$. When the connection $\nabla^H$ is flat, Theorem~\ref{T:computation of odd-dimensional case} allows to connect the $\ind_\tau B_\Phi^H$ and $\ind B_\Phi$ in a particularly nice way.

\begin{theorem}\label{T:twisted index theorem for flat bundles}
Let $S$ be an ungraded Dirac bundle over a complete odd-dimensional oriented Riemannian manifold $M$ and let $\Phi$ be an admissible self-adjoint endomorphism of $S$. Suppose $H\rightarrow M$ is a flat $A$-Hilbert bundle of finite type.
Then
\[
	\ind_\tau B^H_\Phi\ =\ d\cdot\ind B_\Phi,
\]
where $d$ is the $\tau$-dimension of the typical fiber of $H$.
\end{theorem}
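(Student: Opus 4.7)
The plan is to reduce both sides to indices of induced operators on a compact hypersurface by invoking the Callias-type theorem (Theorem~\ref{T:computation of odd-dimensional case}), and then exploit the flatness of $H$ on the compact piece.

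First, I would fix a smooth compact hypersurface $N \subset M$ with $M = M_- \cup_N M_+$, where $M_-$ is compact and its interior contains an essential support of $\Phi$. Applying Theorem~\ref{T:computation of odd-dimensional case} to $B_\Phi$ yields
\[
\ind B_\Phi \ =\ \ind D_{N+},
\]
where $D_{N+}$ is the Dirac operator on the positive spectral subbundle $S_{N+} \subset S|_N$ of $\Phi|_N$, built using the projected connection $\nabla^{S_{N+}}$ as in \eqref{E:nSigmaN+}.

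Next, I would check that the same hypersurface $N$ serves to compute the twisted index via Theorem~\ref{T:computation of odd-dimensional case}. Since $\Phi_H = \Phi \otimes 1$, the spectral decomposition of $\Phi_H|_N$ factors as
\[
(S \otimes H)_{N\pm} \ =\ S_{N\pm} \otimes H|_N,
\]
and the orthogonal projection $\pr_{(S \otimes H)_{N\pm}}$ equals $\pr_{S_{N\pm}} \otimes \mathrm{id}_{H|_N}$. The restriction $\nabla^{S \otimes H}|_N$ equals $\nabla^{S|_N} \otimes 1 + 1 \otimes \nabla^{H|_N}$, so projecting gives
\[
\nabla^{(S \otimes H)_{N\pm}} \ =\ \nabla^{S_{N\pm}} \otimes 1 \ +\ 1 \otimes \nabla^{H|_N}.
\]
Consequently the operator $(D_H)_{N+}$ produced by the Callias-type construction coincides with the twist $(D_{N+})_{H|_N}$ in the sense of \eqref{E:twisted Dirac operator}. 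Theorem~\ref{T:computation of odd-dimensional case} applied to $B^H_\Phi$ then yields
\[
\ind_\tau B^H_\Phi \ =\ \ind_\tau (D_{N+})_{H|_N}.
\]

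Finally, $N$ is a compact manifold and $H|_N$ is a flat $A$-Hilbert bundle of finite type with typical fiber of $\tau$-dimension $d$. In this situation the $\tau$-index of the twisted Dirac operator is equal to $d$ times the ordinary index of the untwisted operator:
\[
\ind_\tau (D_{N+})_{H|_N} \ =\ d \cdot \ind D_{N+}.
\]
This is a standard consequence of the McKean--Singer formula in the von Neumann setting combined with the flatness of $\nabla^{H|_N}$: the local heat-kernel coefficients of $(D_{N+})_{H|_N}^2$ are $d$ times those of $D_{N+}^2$, because the curvature of $H|_N$ vanishes identically. (Alternatively one may invoke the index theorem of Schick \cite{Schick05l2}, in which only the $\tau$-rank of $H|_N$ contributes to $\ch_\tau$.) Chaining the three displayed equalities gives the theorem.

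The main obstacle I anticipate is the second step, namely the clean identification of $(D_H)_{N+}$ with $(D_{N+})_{H|_N}$. Conceptually this amounts to the commutation between the projection onto the positive spectral subbundle (which affects only the $S$-factor) and the tensor-product structure of the connection; but bookkeeping is required because $D_{N+}$ is not the naive restriction of $D$ to $S_{N+}$ but rather the Dirac operator attached to the \emph{projected} connection $\nabla^{S_{N+}}$, and one has to verify that this projection commutes with $1 \otimes \nabla^{H|_N}$ in the tensored setting. Once this identification is in place the result follows routinely from the two theorems already proved plus the classical flat-twist principle on compact manifolds.
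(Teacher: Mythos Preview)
Your proposal is correct and follows essentially the same route as the paper: apply the Callias-type theorem to both $B_\Phi$ and $B^H_\Phi$ along a common hypersurface $N$, identify $(D_H)_{N+}$ with the twist $(D_{N+})_{H|_N}$, and conclude via the flat-twist principle on the compact manifold $N$ (the paper cites \cite[Theorem~7.30 and Corollary~5.13]{Schick05l2} for this last step). Your discussion of the identification $(D_H)_{N+}=(D_{N+})_{H|_N}$ is in fact more careful than the paper's, which simply asserts $(S\otimes H)_{N+}=S_{N+}\otimes H_N$ and then makes the identification without further comment.
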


\begin{proof}
Choose a compact hypersurface $N\subset M$ such that $M=M_-\cup_N M_+$, where $M_-$ is compact and contains an essential support of both endomorphisms $\Phi$ and $\Phi_H$.

We apply the construction of Section~\ref{SS:Callias-type theorem} to construct the Dirac bundles $S_{N+}$ and $\left(S\tensor H\right)_{N+}$ over $N$. Let $D_{N+}$ and  $D_{N+}^H$ the Dirac operators on $N$ defined as in \eqref{E:DN}.
By Theorem~\ref{T:computation of odd-dimensional case}, we have
\begin{equation}\label{E:twisted Gamma-index1}
	\ind_\tau B^H_\Phi\ =\ \ind_\tau D_{N+}^H,
	\qquad  \ind B_\Phi=\ind D_{N+}.
\end{equation}

Let $H_N\rightarrow N$ denote the restriction of  the flat bundle $H$ to $N$ and let $D_{N+}^{H_N}$ denote the Dirac operator $D_{N+}$ twisted with the bundle $H_N$.
Observe that $\left(S\tensor H\right)_{N+}=S_{N+}\tensor H_N$ so that  we can identify $D_{N+}^H$ with the operator $D_{N+}^{H_N}$.
Therefore,
\begin{equation}\label{E:twisted Gamma-index2}
\ind_\tau D_N^H=\ind_\tau D_{N+}^{H_N}.
\end{equation}
Finally, since $H_{N+}$ is a flat $A$-Hilbert bundle, from \cite[Theorem~7.30 and Corollary~5.13]{Schick05l2} we get
\begin{equation}\label{E:twisted Gamma-index3}
\ind_\tau D_{N+}^{H_N}=d\cdot \ind (D_{N+}),
\end{equation}
where $d$ is the $\tau$-dimension of the typical fiber of $H_N$, that by definition of this bundle coincides with the typical fiber of $H$.
The theorem follows now from equations (\ref{E:twisted Gamma-index1}), (\ref{E:twisted Gamma-index2}) and (\ref{E:twisted Gamma-index3}).
\end{proof}


\subsection{Galois covers}\label{SS:Galois covers}

Suppose $\Gamma$ is a discrete group and denote by $l^2(\Gamma)$ the Hilbert space  of complex valued square summable functions on $\Gamma$.
We let $\Gamma$ act on the Hilbert space $l^2(\Gamma)$ by the \emph{right regular representation}
\begin{equation}\label{E:right regular representation}
\left(R_g f\right)(h):=f\left(h\cdot g\right),\qquad\ g,h\in\Gamma,\ \ f\in l^2(\Gamma).
\end{equation}
Observe that this action induces an action of the group algebra $\CC\Gamma$ on $l^2(\Gamma)$, that coincides with the right convolution multiplication.
Observe also that the operator $R_g$ defined by formula (\ref{E:right regular representation}) is bounded and that $R_g^\ast=R_{g^{-1}}$.
In this way we identify the group algebra $\CC\Gamma$ with a $\ast$-subalgebra of $\mathcal{B}(l^2(\Gamma))$.
The weak closure of $\CC\Gamma$ in $\mathcal{B}(l^2(\Gamma))$ is called the {\em group von Neumann algebra} of $\Gamma$ and is denoted by $\NGam$. On this algebra we have the canonical faithful positive trace $\tau$ defined by
\begin{equation}\label{E:canonical trace}
\tau(f)=\innerprod{f(\delta_e),\delta_e}_{l^2(\Gamma)},\ \ \ \ \ \ \ \ \ \ \ \ \ \ \ \ f\in \NGam,
\end{equation}
where $\delta_e\in l^2(\Gamma)$ is by definition the characteristic function of the unit element.

Notice that the right $\Gamma$-action on $l^2(\Gamma)$ extends to a right $\NGam$-action.
In this way we endow the space $l^2(\Gamma)$ with a Hilbert $\NGam$-space structure.
We also let  $\Gamma$ act on the Hilbert space $l^2(\Gamma)$ by the \emph{left regular representation}
\begin{equation}\label{E:left regular representation}
\left(L_g f\right)(h):=f\left(g^{-1}\cdot h\right),\ \ \ \ \ \ \ \ \ \ \ \ \ \ \ \ g,h\in\Gamma,\ \ f\in l^2(\Gamma).
\end{equation}
The right action of $\NGam$ on $l^2(\Gamma)$ commutes with the left $\Gamma$-action. 
Therefore, 
\[
	H_\Gamma\ :=\ \widetilde M\times_\Gamma l^2(\Gamma)
\] 
is an $\NGam$-Hilbert bundle of finite type on $M$.

It follows from  \eqref{E:canonical trace} that the $\tau$-dimension of $l^2(\Gamma)$ is one. Hence, the $\tau$-dimension of the typical fiber of $H_\Gamma$ is also 1, i.e.
\begin{equation}\label{E:dim_tau H_Gamma}
	\dim_\tau H_{\Gamma,x}\ = \ 1, \qquad x\in M.
\end{equation}

\subsection{Lift of a Callias-type operator to a Galois cover}\label{SS:tildeBPhi}

Notice that, since $\Gamma$ is discrete, $H_\Gamma$ is endowed with a canonical flat connection $\nabla^{H_\Gamma}$. Let $D_{H_\Gamma}$ and $\Phi_{H_\Gamma}$ be the twisted Dirac operator and the endomorphism induced in $S\otimes{}H_\Gamma$ by $D$ and $\Phi$ as in Section~\ref{SS:twisted Callias-type operators}.

Let $M$, $S$, $\Phi$, $B_\Phi$, $\widetilde{M}$, $\Gamma$, $\widetilde{S}$, $\widetilde{\Phi}$, $\widetilde{B_\Phi}$ be as in Section~\ref{SS:the Gamma-index theorem}.
Notice that
\[
	\widetilde{B_\Phi}\ =\ 
	\begin{pmatrix}
	  0&\widetilde{D}-i\widetilde{\Phi}\\\widetilde{D}+i\widetilde{\Phi}&0
	\end{pmatrix},
\]
where $\widetilde{D}$ and $\widetilde{\Phi}$ are the lifts of $D$ and $\Phi$ to the Galois cover.
We want to compare the operator $\widetilde{B_\Phi}$ with an operator $B^H_\Phi$ given by the twisted construction of Section~\ref{SS:twisted Callias-type operators}.

Observe that the action of $\Gamma$ on $L^2(\widetilde{M},\widetilde{S})$ induces an $\NGam$-Hilbert space structure on this space.
Observe also that a closed subspace $L\subset L^2(\widetilde{M},\widetilde{S})$ is $\Gamma$-invariant if and only if it is $\NGam$-invariant and in this case the $\tau$-dimension and the $\Gamma$-dimension of $L$ coincide.  
Moreover, there is a $\NGam$-Hilbert space isomorphism
\begin{equation}
	\mathcal{I}:\,L^2(\widetilde{M},\widetilde{S})
	\ \longrightarrow \ L^2(M,S\tensor H_\Gamma)
\end{equation}
with the following properties:
\begin{enumerate}
\item [(a)] For every $s\in C^\infty_c(\widetilde{M},\widetilde{S})$
\begin{equation}\label{E:formula of isomorphism of Hilbert Gamma modules}
	\left(\mathcal{I}s\right)(x)
	\ =\ 
	\sum_{\gamma\in\Gamma}s(\gamma\cdot \widetilde{x})\tensor 
	(\widetilde{x},\gamma),\qquad  x\in M,
\end{equation}
where $\widetilde{x}$ is any lift of $x$ to $\widetilde{M}$.
Notice that in Equation~(\ref{E:formula of isomorphism of Hilbert Gamma modules}) the fibers $\widetilde{S}_{\gamma\widetilde{x}}$ and $S_x$ are identified.
\item [(b)] The operators $\widetilde{D}$ and $D_{H_\Gamma}$ are conjugated through $\mathcal{I}$. Here, $\widetilde{D}$ is the lifting of $D$ to $\widetilde{M}$ and $D_{H_\Gamma}$ is the Dirac operator associated with the twisted bundle $S\tensor H_\Gamma$: cf. Formula~\eqref{E:twisted Dirac operator}.
\item[(c)] If $L$ is a closed $\NGam$-invariant subspace of $L^2(\widetilde{M},\widetilde{S})$, then $\dim_\Gamma L=\dim_\tau \mathcal{I}(L)$.
\item[(d)] Suppose that
\[
	\mathcal{D}_1\ =\ 
	\begin{pmatrix}
		0&D_1^-\\D_1^+&0
	\end{pmatrix} \qquad\text{and}\qquad
	\mathcal{D}_2\ =\ 
	\begin{pmatrix}
		0&D_2^-\\D_2^+&0
	\end{pmatrix}
\]
are odd formally self-adjoint $\NGam$-equivariant differential operators  acting respectively on $C^\infty_c(\widetilde{M},\widetilde{S}\oplus\widetilde{S})$ and $C^\infty_c(M,(S\oplus S)\tensor H)$.
Suppose also that $\mathcal{I}\circ D_1^\pm=D_2^\pm\circ\mathcal{I}$ on $C^\infty_c(\widetilde{M},\widetilde{S})$.
Then $\mathcal{D}_1$ is essentially self-adjoint if and only if $\mathcal{D}_2$ is and $\mathcal{D}_1$ is $\tau$-Fredholm if and only if $\mathcal{D}_2$ is.
In this case, it follows from (c)  that $\ind_\tau \mathcal{D}_1$ and $\ind_\tau \mathcal{D}_2$ coincide.
\end{enumerate}
For more details about the construction of the map $\mathcal{I}$ and its properties, we refer to \cite[Section~7.5]{Schick05l2}, where the case when $M$ is compact is treated. 
The case when $M$ is noncompact follows with minor modifications.

\begin{lemma}\label{L:tau-fredholmness of B lifted to Galois covers}
The operator $\widetilde{B_\Phi}$ is $\tau$-Fredholm and we have
\begin{equation}
	\ind_\Gamma\widetilde{B_\Phi}\ =\ \ind_\tau B_\Phi^{H_\Gamma}.
\end{equation}
\end{lemma}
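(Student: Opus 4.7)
The plan is to use the isomorphism $\mathcal{I}:L^2(\widetilde{M},\widetilde{S})\to L^2(M,S\otimes H_\Gamma)$ recalled in the excerpt to conjugate $\widetilde{B_\Phi}$ onto the twisted Callias-type operator $B_\Phi^{H_\Gamma}$ and then invoke properties (c) and (d) of $\mathcal{I}$. First I would observe that property (b) already tells us that $\mathcal{I}\circ\widetilde{D}=D_{H_\Gamma}\circ\mathcal{I}$ on $C^\infty_c(\widetilde{M},\widetilde{S})$. The next step is to verify the analogous compatibility for the endomorphisms, namely
\[
\mathcal{I}\circ (i\widetilde{\Phi})\ =\ (i\Phi_{H_\Gamma})\circ \mathcal{I}.
\]
This follows directly from formula \eqref{E:formula of isomorphism of Hilbert Gamma modules}: $\widetilde{\Phi}$ is the $\Gamma$-equivariant lift of $\Phi$, so it acts fiberwise on $\widetilde{S}_{\gamma\widetilde{x}}=S_x$ as $\Phi(x)$, while $\Phi_{H_\Gamma}=\Phi\otimes 1$ acts on $S_x\otimes H_{\Gamma,x}$ in the first tensor factor. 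Summing over $\gamma$ in \eqref{E:formula of isomorphism of Hilbert Gamma modules} therefore intertwines the two.

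Combining the two intertwinings yields $\mathcal{I}\circ(\widetilde{D}\pm i\widetilde{\Phi}) = (D_{H_\Gamma}\pm i\Phi_{H_\Gamma})\circ\mathcal{I}$, which in block form says that the odd formally self-adjoint $\NGam$-equivariant operators $\widetilde{B_\Phi}$ and $B_\Phi^{H_\Gamma}$ are conjugate through $\mathcal{I}\oplus\mathcal{I}$ in the sense of property (d). Since Lemma~\ref{L:PhiH is admisssible} shows $\Phi_{H_\Gamma}$ is admissible, the operator $B_\Phi^{H_\Gamma}$ is a Callias-type operator in the sense of Section~\ref{SS:Callias-type operators}, and hence essentially self-adjoint and $\tau$-Fredholm by the results of \cite{BrCe15} recalled in Section~\ref{SS:Callias index}. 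Property (d) then transfers both properties back to $\widetilde{B_\Phi}$.

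Finally, the index equality follows from property (c): the conjugation $\mathcal{I}$ restricts to an $\NGam$-linear isomorphism between the closed $\Gamma$-invariant subspaces $\ker(\widetilde{D}\pm i\widetilde{\Phi})\subset L^2(\widetilde{M},\widetilde{S})$ and $\ker(D_{H_\Gamma}\pm i\Phi_{H_\Gamma})\subset L^2(M,S\otimes H_\Gamma)$, so that
\[
\dim_\Gamma\ker(\widetilde{D}\pm i\widetilde{\Phi})\ =\ \dim_\tau\ker(D_{H_\Gamma}\pm i\Phi_{H_\Gamma}).
\]
Subtracting the two identities gives $\ind_\Gamma\widetilde{B_\Phi}=\ind_\tau B_\Phi^{H_\Gamma}$, and along the way this also proves Lemma~\ref{L:Gamma index of Callias-type operators}, since the right-hand sides are finite. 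There is no real obstacle here; the content is almost entirely bookkeeping around the isomorphism $\mathcal{I}$, and the only point deserving mild care is checking that the pointwise action of $\widetilde{\Phi}$ really does correspond to $\Phi\otimes 1$ under the sum-over-$\Gamma$ formula for $\mathcal{I}$.
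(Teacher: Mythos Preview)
Your proposal is correct and follows essentially the same approach as the paper: establish that $B_\Phi^{H_\Gamma}$ is essentially self-adjoint and $\tau$-Fredholm via \cite{BrCe15}, verify $\mathcal{I}\circ\widetilde{\Phi}=\Phi_{H_\Gamma}\circ\mathcal{I}$ using formula~\eqref{E:formula of isomorphism of Hilbert Gamma modules}, and then invoke properties (b), (c), (d) of $\mathcal{I}$ to transfer Fredholmness and equate the indices. The paper's proof is a bit more compressed, but the ingredients and their order are the same.
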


\begin{proof}
By \cite{BrCe15} (see also Section~\ref{SS:Callias-type operators} of the present paper), the operator $B^{H_\Gamma}_\Phi$ is essentially self-adjoint and its closure is $\tau$-Fredholm.
By points (b) and (d), to prove the lemma  it suffices to show that $\Phi_H\circ\mathcal{I}=\mathcal{I}\circ \widetilde{\Phi}$ on $C^\infty_c(\widetilde{M},\widetilde{S})$.
Fix $s\in C^\infty_c(\widetilde{M},\widetilde{S})$. By point (a) we have
\[
\begin{array}{rcl}
\left(\mathcal{I}\circ \widetilde{\Phi}\right)(s)(x)&=&\sum_{\gamma\in\Gamma}\widetilde{\Phi}(s(\gamma\cdot\widetilde{x}))\tensor(\widetilde{x},\gamma)
=\sum_{\gamma\in\Gamma}\widetilde{\Phi}(s(\gamma\cdot\widetilde{x}))\tensor(\widetilde{x},\gamma)\vspace{0.2cm}\\
&=&\widetilde\Phi_H\left(\sum_{\gamma\in\Gamma}s(\gamma\cdot\widetilde{x})\tensor(\widetilde{x},\gamma)\right)
=\left( \Phi_H \circ\mathcal{I} \right)(s)(x),
\end{array}
\]
where $\widetilde{x}$ is any lift of $x$ to $\widetilde{M}$. The proof is complete.
\end{proof}

\subsection{Proof of Lemma~\ref{L:Gamma index of Callias-type operators}}
It follows from Lemma~\ref{L:tau-fredholmness of B lifted to Galois covers} and the fact that the $\tau$-dimension and the $\Gamma$-dimension coincide on closed $\NGam$-invariant subspaces of $L^2(\widetilde{M},\widetilde{S})$.
\hfill $\square$

\subsection{Proof of Theorem~\ref{T:Gamma-index theorem in the odd-dimensional case}}
Since $H_\Gamma$ is a flat $\NGam$-bundle, the thesis follows from \eqref{E:dim_tau H_Gamma},  Lemma~\ref{L:tau-fredholmness of B lifted to Galois covers}, and Theorem~\ref{T:twisted index theorem for flat bundles}.
\hfill$\square$


\end{document}